\documentclass[a4paper,reqno]{amsart}  

\language 2 
\usepackage[T1]{fontenc}         
\usepackage[applemac]{inputenc} 
\usepackage{amsthm}
\usepackage{amsmath}
\usepackage{amsfonts}
\usepackage{amssymb}
\usepackage[arrow, matrix, curve]{xy}
\usepackage{picinpar, graphicx}
\usepackage{geometry}
\usepackage{epic}
\usepackage{color}

\sloppy \frenchspacing
\newtheorem{Thm}{Theorem}

\newtheorem{Lem}{Lemma}
\newtheorem{Prop}{Proposition}

\begin{document}

\renewcommand{\thefootnote}{}
\footnotetext{Research partially supported by a CRM-ISM post-doctoral Fellowship, by NSERC Grant RGPIN 105490-2011 and by the Minist\'erio de Ci\^encia e Tecnologia, Brazil, CNPq Proc. No. 303774/2009-6.}

\title[Second-order equations and immersions of pseudo-spherical surfaces]{Second-order equations and local isometric immersions of pseudo-spherical surfaces}
\author{Nabil Kahouadji, Niky Kamran  and Keti Tenenblat}
\date{}

\begin{abstract}
We consider the class of differential equations that describe pseudo-spherical surfaces of the form  
$u_t=F(u,u_x,u_{xx})$ and $u_{xt}=F(u, u_x)$ given in Chern-Tenenblat \cite{ChernTenenblat} and Rabelo-Tenenblat \cite{RabeloTenenblat90}.  We answer the following question: 
Given a pseudo-spherical surface determined by a solution $u$ of such an equation, do the coefficients of the second fundamental form of the local isometric immersion in $\mathbb{R}^3$ depend on a jet of finite 
order of $u$? We show that, except for the sine-Gordon equation, where the coefficients  
depend on a jet of order zero, for all other differential equations, whenever such an immersion exists, the 
coefficients are universal functions of $x$ and $t$, independent  of $u$.    
\\  \\ 
Keywords: evolution equations; nonlinear hyperbolic equations; pseudo-spherical surfaces; isometric immersions. \\
MSC 2010: 35L60, 37K25, 47J35, 53B10, 53B25  
\end{abstract}

\maketitle

\section{Introduction}

The class of partial differential equations describing pseudo-spherical surfaces, which has been defined and studied in depth in a foundational paper by Chern and Tenenblat \cite{ChernTenenblat}, contains a large subclass of equations enjoying remarkable integrability properties, such as the existence of infinite hierarchies of conservation laws, B\"acklund transformations and associated linear problems. Recall that a partial differential equation 
\begin{equation}\label{pde}
\Delta(t,x,u,\frac{\partial u}{\partial x},\frac{\partial u}{\partial t},\ldots,\frac{\partial^{k}u}{\partial t^{l}\partial x^{k-l}})=0,
\end{equation}
is said to describe pseudo-spherical surfaces if there exist $1$-forms 
\begin{equation}\label{forms}
\omega^{i}=f_{i1}dx+f_{i2}dt,\quad 1\leq i \leq3,
\end{equation}
where the coefficients $f_{ij},\,1\leq i \leq 3,\,1\leq j\leq 2,$ are smooth functions of $t,x,u$ and finitely many derivatives of $u$ with respect to $t$ and $x$, such that the structure equations 
\begin{equation}\label{struct}
d\omega^{1}=\omega^{3} \wedge\omega^{2},\,d\omega^{2}=\omega^{1} \wedge\omega^{3},\,d\omega^{3}=\omega^{1} \wedge\omega^{2}
\end{equation}
hold if, and only if, $u$ is a solution of (\ref{pde}) for which $\omega^{1}\wedge \omega^{2}\neq 0$. In other words, every smooth solution of an equation (\ref{pde}) describing pseudo-spherical surfaces defines on its domain $U\subset \mathbb{R}^{2}$ a Riemannian metric 
\begin{equation}\label{metric}
ds^{2}=(\omega^{1})^{2}+(\omega^{2})^{2},
\end{equation}
of constant Gaussian curvature equal to $-1$, with $\omega^{3}$ being the Levi-Civita connection $1$-form of the metric (\ref{metric}). 

One of the most important examples of a partial differential equation describing pseudo-spherical surfaces is the sine-Gordon equation
\begin{equation}
\frac{\partial^{2}u}{\partial t\partial x}=\sin u,
\end{equation}
for which a choice of $1$-forms (\ref{forms}) satisfying the structure equations (\ref{struct}) is given by
\begin{eqnarray}\label{omega1}
\omega^1 &=& \cos \frac{u}{2}( dx+dt),\\\label{omega2}
\omega^2 &=& \sin \frac{u}{2} (dx - dt),\\\label{omega3}
\omega^3 &=& \frac{u_{x}}{2} dx - \frac{u_{t}}{2} dt.
 \end{eqnarray}
It should be noted that this choice of $1$-forms is by no means unique. In particular, we could also have used 
\begin{eqnarray}
\omega^1 &=& \frac{1}{\eta}\sin u \,dt,\\
\omega^2 &=& \eta\, dx+\frac{1}{\eta}\cos u \,dt,\\
\omega^3 &=& u_{x}\,dx,
 \end{eqnarray}
where $\eta$ is a continuous non-vanishing real parameter. This continuous parameter is closely related to the parameter appearing in the classical B\"acklund transformation for the sine-Gordon equation and accounts for the existence of infinitely many conservation laws for the sine-Gordon equation. More generally, partial differential equations (\ref{pde}) which describe pseudo-spherical surfaces and for which one of the components $f_{ij}$ (say $f_{21}$) can be chosen to be a continuous parameter will be said to describe $\eta$ pseudo-spherical surfaces.


In \cite{ChernTenenblat}, Chern and Tenenblat  
provided a complete classification of the evolution equations of the form 
\begin{equation}\label{evoleq}
u_t= F(u,u_x,..., \partial u/\partial x^k), 
\end{equation} 
which describe pseudo-spherical surfaces under the assumption that $f_{21}=\eta$, where $\eta$ is a real parameter, providing an extensive class of non-linear partial differential equations, in two independent 
variables, describing pseudo-spherical surfaces. Rabelo in \cite{Rabelo88}, \cite{Rabelo89} characterized equations of the form $u_{xt}=F(u, u_x,..., \partial u/\partial x^k)$, with $f_{21}=\eta$. The complete classification for equations of type $u_{xt}=F(u,u_x)$ 
and  $u_t=u_{xxx}+G(u,u_x,u_{xx})$ was given in \cite{RabeloTenenblat90} and \cite{RabeloTenenblat92}, respectively. 

In general, the importance of the class of differential equations that describe pseudo-spherical surfaces  is due to the fact that such a differential equation is always the integrability condition of a linear system of differential equations, which may be used in the inverse scattering method to solve the differential equation 
(see for example \cite{BealsRabeloTenenblat}, where the method was applied to a subclass of equations obtained in \cite{Rabelo89}).
While the assumption of $f_{21}=\eta$ is natural in the context of the inverse scattering method,  
the problem of classifying the differential equations describing pseudo-spherical surfaces, without any other assumption, is important in its own right and was considered by Kamran and Tenenblat in \cite{KamranTenenblat}, 
where one can find a complete classification of evolution equations of the form (\ref{evoleq}) 
which describe pseudo-spherical surfaces, as opposed to $\eta$ pseudo-spherical surfaces.   
These results provide a systematic way of verifying if a given differential equation of this type describes pseudo-spherical surfaces. The results obtained in \cite{KamranTenenblat} were extended by Reyes in 
\cite{Reyes98} to differential equations of the form 
$u_t= F(x,t, u,u_x,..., \partial u/\partial x^k)$.  
The concept of a differential equation that describes pseudo-spherical surfaces was extended by Ding and Tenenblat in \cite{DingTenenblat} to a system of differential equations that describes constant curvature surfaces (pseudo-spherical and also spherical), where classification results for such systems were obtained. 
More recently, in order to determine new classes of differential equations that describe pseudo-spherical surfaces, as a consequence of \cite{KamranTenenblat}, assuming that 
$f_{21}$ and $f_{31}$ are linear combinations of $f_{11}$,  Gomes \cite{Gomes} classified and obtained large new classes of such equations by considering fifth order equations of type (\ref{evoleq}).

We should point out that the classification results mentioned above, contain not only general statements, but also examples of interesting new and well kown non linear differential equations.  
Other aspects of the theory of differential equations which describe pseudo-spherical surfaces and its applications thereof can be found in 
\cite{CavalcanteTenenblat}, \cite{GorkaReyes}, \cite{JorgeTenenblat}, \cite{FoursovOlverReyes}, \cite{Reyes98}-\cite{Reyes106}.

A classical theorem in the theory of surfaces states that any pseudo-spherical surface can be locally isometrically immersed into three-dimensional Euclidean space $\mathbb{E}^{3}$. This result can thus be applied to the metrics arising from the solutions $u$ of any partial differential equation (\ref{pde}) describing pseudo-spherical surfaces,  thereby associating to any solution $u$ a local isometric immersion of a metric with constant Gaussian curvature equal to $-1$. This theorem is however largely an existence result, which does not give an explicit expression for the second fundamental form of the local isometric immersion. It is therefore a most remarkable property of the sine-Gordon equation that the second fundamental form of any such immersion can be expressed in closed form as a function of $u$ and finitely many derivatives. Indeed, let us first recall that the components $a,b,c$ of the second fundamental form of any local isometric immersion of a metric of constant curvature equal to $-1$ into $\mathbb{E}^{3}$ are defined by the $1$-forms $\omega^{3}_{1},\omega^{3}_{2}$ according to 
\begin{equation}\label{w13_w23}
\omega^3_{1} = a\omega^1+b\omega^2, \quad \omega^3_{2} = b\omega^1+c\omega^2,
\end{equation}
where these forms satisfy the structure equations
\begin{equation}\label{Codazzi}
d\omega^3_{1} = -\omega^3_{2}\wedge\omega^3, \quad d\omega^3_{2} = \omega^3_{1}\wedge\omega^3,
\end{equation}
and the Gauss equation 
\begin{equation*}
ac-b^2=-1.
\end{equation*}
 For the sine-Gordon equation, with the choice of $1$-forms $\omega^{1},\,\omega^{2}$ and $\omega^{3}$ given by (\ref{omega1}), (\ref{omega2}) and (\ref{omega3}),  it is easily verified that the $1$-forms $\omega^{3}_{1},\omega^{3}_{2}$ are given by
\begin{eqnarray*}
\omega^3_{1} &=& \sin\frac{u}{2} (dx+dt) =  \tan \frac{u}{2}\omega^1,\\
\omega^3_{2} &=& -\cos\dfrac{u}{2} (dx - dt) = -\cot \frac{u}{2}\omega^2.
\end{eqnarray*}
In general, given a partial differential equation (\ref{pde}) describing pseudo-spherical surfaces, it is straightforward to derive a set of necessary and sufficient conditions, in terms of the coefficients $f_{ij}$ of the $1$-forms (\ref{forms}), for $a,b$ and $c$ to be the components of the second fundamental form of a local isometric immersion corresponding to a solution of (\ref{pde}). We write  
\begin{eqnarray}
d\omega^3_{1}= \Big( db(e_{1}) -  da(e_{2})\Big)\omega^1\wedge\omega^2 - a\,\omega^2\wedge\omega^3 +b\,\omega^1\wedge\omega^3, \label{dw13}\\ 
d\omega^3_{2}= \Big( dc(e_{1}) -  db(e_{2})\Big)\omega^1\wedge\omega^2 - b\,\omega^2\wedge\omega^3 +c\,\omega^1\wedge\omega^3, \label{dw23}
\end{eqnarray}
where $(e_{1}, e_{2})$ is the pair of vector fields dual to the coframe $(\omega^1, \omega^2)$, given by 
\begin{equation*}
 \left|\begin{array}{cc}f_{11} & f_{21} \\f_{12} & f_{22}\end{array}\right|e_{1} = f_{22}\partial_{x} - f_{21}\partial_{t}, \quad  \left|\begin{array}{cc}f_{11} & f_{21} \\f_{12} & f_{22}\end{array}\right|e_{2} = -f_{12}\partial_{x} + f_{11}\partial_{t}.
 \end{equation*}
Thus, using the notation $D_{t}$ and $D_{x}$ for the total derivative operators, we obtain
\begin{eqnarray}\label{Eq1}
 f_{11} D_{t}a + f_{21}D_{t}b - f_{12}D_{x}a - f_{22}D_{x}b - 2b \left|\begin{array}{cc}f_{11} & f_{31} \\f_{12} & f_{32}\end{array}\right|+ (a-c)\left|\begin{array}{cc}f_{21} & f_{31}\\f_{22} & f_{32}\end{array}\right|   = 0,\\\label{Eq2}
 f_{11} D_{t}b + f_{21}D_{t}c - f_{12}D_{x}b - f_{22}D_{x}c +(a-c) \left|\begin{array}{cc}f_{11} & f_{31} \\f_{12} & f_{32}\end{array}\right|+ 2b\left|\begin{array}{cc}f_{21} & f_{31}\\f_{22} & f_{32}\end{array}\right|  = 0,
 \end{eqnarray}
where $a, b$ and $c$, which are assumed to depend on $t,x,u$ and finitely many derivatives of $u$ with respect to $t$ and $x$, satisfy the Gauss equation 
\begin{equation}\label{Gauss}
ac-b^2=-1.
\end{equation}

In view of the above discussion, it is is natural to ask the following question: {\em Do there exist equations other than the sine-Gordon equation within the class of partial differential equations describing pseudo-spherical (or $\eta$ pseudo-spherical) surfaces, for which the components $a,b,c$ of the second fundamental form of the local isometric immersion depend on a jet of finite order of $u$, that is on $x,t, u$ and finitely many derivatives of $u$?} 

If such equations were to exist, they would have an important geometric property in common with the sine-Gordon equation. In this paper, we give a complete answer to the above question in the case of second-order hyperbolic equations of the form
\begin{equation}\label{hyperbolic}
u_{xt}  = F(u,u_{x}),
\end{equation}
and evolution equations of the form
\begin{equation}\label{evolution}
u_{t}=F(u,u_{x},u_{xx}),
\end{equation}
which describe $\eta$ pseudo-spherical surfaces as in \cite{RabeloTenenblat90} and \cite{ChernTenenblat} .

We begin with the case of evolution equations (\ref{evolution}), for which our main result is the following:
\begin{Thm}\label{EvolRes}Except for second-order evolution equations of the form 
\begin{equation}\label{EqException}
u_{t} = \dfrac{f_{12,u_{x}}}{f_{11,u}}u_{xx} + \dfrac{f_{12, u}}{f_{11, u}}u_{x} \mp \dfrac{\lambda f_{11} - \eta f_{12}}{f_{11, u}}, 
\end{equation} where $f_{11, u} \neq 0$ and $f_{12, u_{x}} \neq 0$, there exists no second-order evolution equation describing $\eta$ pseudo-spherical surfaces, given as in \cite{ChernTenenblat}, with the property that the coefficients of the second fundamental forms of the local isometric immersions of the surfaces associated to the solutions $u$ of the equation depend on a jet of finite order of $u$.  Moreover, the coefficients of the second fundamental forms of the local isometric immersions of the surfaces determined by the solutions $u$ of (\ref{EqException}) are universal, i.e., they are universal functions of $x$ and $t$, independent of $u$. 
\end{Thm}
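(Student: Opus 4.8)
The plan is to assume that a local isometric immersion exists whose second fundamental form coefficients $a,b,c$ depend on a finite jet of $u$, and to run a descent on the jet order. Because (\ref{evolution}) is an evolution equation, every $t$-derivative of $u$ may be eliminated in favour of $x$-derivatives, so I may take $a,b,c$ to be functions of $x,t,u,u_x,\dots,u_{x^N}$, where $u_{x^N}$ is the highest derivative actually occurring. I start from the Chern--Tenenblat normal form \cite{ChernTenenblat} of the $1$-forms for second-order evolution equations describing $\eta$ pseudo-spherical surfaces: one has $f_{21}=\eta$, with $f_{11}$ and $f_{31}$ depending only on $u$ and $f_{12},f_{22},f_{32}$ on $u,u_x$. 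The governing relations are (\ref{Eq1}), (\ref{Eq2}) together with the Gauss equation (\ref{Gauss}).

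The engine of the descent is the top-order coefficient of (\ref{Eq1}) and (\ref{Eq2}). Expanding $D_t a$ through the equation, the highest derivative $u_{x^{N+2}}$ enters only via $a_{u_{x^N}}D_x^N F$ (and the analogous term in $b$), with coefficient $F_{u_{xx}}$. Since the equation is genuinely second order, $F_{u_{xx}}\neq0$, so the $u_{x^{N+2}}$-coefficients of (\ref{Eq1}) and (\ref{Eq2}) give $f_{11}a_{u_{x^N}}+\eta\,b_{u_{x^N}}=0$ and $f_{11}b_{u_{x^N}}+\eta\,c_{u_{x^N}}=0$. Differentiating (\ref{Gauss}) in $u_{x^N}$ and substituting these two relations yields $a_{u_{x^N}}\bigl(af_{11}^2+2\eta\,bf_{11}+c\,\eta^2\bigr)=0$. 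Thus either $a_{u_{x^N}}=b_{u_{x^N}}=c_{u_{x^N}}=0$ and the order drops, or the quadratic $af_{11}^2+2\eta\,bf_{11}+c\,\eta^2$, which is the value of the second fundamental form on $\partial_x$, vanishes (the \emph{asymptotic} alternative).

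The core of the argument is to exclude the asymptotic alternative for $N\ge2$. Integrating the two linear relations in $u_{x^N}$ gives $b=-\tfrac{f_{11}}{\eta}a+\beta$ and $c=\tfrac{f_{11}^2}{\eta^2}a+\gamma$ with $\beta,\gamma$ independent of $u_{x^N}$; substituting into the asymptotic condition gives $2f_{11}\beta+\eta\gamma=0$, and then (\ref{Gauss}) forces $\beta=\pm1$, $\gamma=\mp\tfrac{2f_{11}}{\eta}$. With $b,c$ so expressed, the combination $f_{11}D_ta+\eta D_tb$ collapses to $-f_{11,u}Fa$, so the $u_{x^{N+2}}$ terms of (\ref{Eq1}) cancel and, for $N\ge2$, the surviving coefficient of $u_{x^{N+1}}$ is a nonzero multiple of $(\eta f_{12}-f_{11}f_{22})\,a_{u_{x^N}}$. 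Since $a_{u_{x^N}}\neq0$, this forces $f_{11}f_{22}-\eta f_{12}=0$, which is exactly the vanishing of the coefficient of $dx\wedge dt$ in $\omega^1\wedge\omega^2$, contradicting $\omega^1\wedge\omega^2\neq0$. Hence the order must drop whenever $N\ge2$, and the descent brings us to $N\le1$.

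It remains to treat $N=1$ and $N=0$, and this is where I expect the main difficulty and where the exceptional equation surfaces. At these low orders the term $-f_{11,u}Fa$ sits at the same jet level as $D_x a$, so the clean contradiction above no longer applies and one must combine the next coefficients of both (\ref{Eq1}) and (\ref{Eq2}) with the structure equations (\ref{struct}). Using $f_{11}=f_{11}(u)$ and $f_{12}=f_{12}(u,u_x)$, so that $f_{11,u}u_t=D_tf_{11}$ and $f_{12,u}u_x+f_{12,u_x}u_{xx}=D_xf_{12}$, the compatibility conditions reduce to the single relation $D_tf_{11}-D_xf_{12}=\mp(\lambda f_{11}-\eta f_{12})$, which upon solving for $u_t$ is precisely (\ref{EqException}); here $\lambda$ is the constant appearing in the classification normal form. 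In every remaining case the system (\ref{Eq1}), (\ref{Eq2}), (\ref{Gauss}) is over-determined and inconsistent, so no finite-jet immersion exists. Finally, for (\ref{EqException}) one checks that any finite-jet solution necessarily reduces to the branch $a_u=b_u=c_u=0$, so that $a,b,c$ solve a closed system in $x$ and $t$ alone and are therefore universal, independent of $u$. The principal obstacle throughout is this last low-order step, where the asymptotic alternative is no longer self-contradictory and the over-determined system must be disentangled by hand --- precisely the computation that singles out (\ref{EqException}).
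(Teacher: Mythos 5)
Your descent is sound as far as it goes, and it coincides with the paper's Lemma \ref{LemCoeff}: the top-order relations $f_{11}a_{z_\ell}+\eta b_{z_\ell}=f_{11}b_{z_\ell}+\eta c_{z_\ell}=0$, the dichotomy produced by the differentiated Gauss equation, and the exclusion of the ``asymptotic'' branch for high order via $\Delta_{12}a_{z_\ell}=0$ with $\Delta_{12}=f_{11}f_{22}-\eta f_{12}\neq0$ are all exactly the paper's steps. The gap is in everything after that, which is where the theorem's actual content lies. Your claim that the exceptional equation (\ref{EqException}) ``surfaces'' from the immersion compatibility conditions at $N\le 1$ is wrong: the relation $D_tf_{11}-D_xf_{12}=\mp(\lambda f_{11}-\eta f_{12})$ that you write down is the structure equation $d\omega^1=\omega^3\wedge\omega^2$ for the class $f_{31}=\pm f_{11}$, $f_{22}=\lambda$ constant, $f_{32}=\pm f_{12}$ (the paper's Lemma \ref{LemHL=0}, coming from Chern--Tenenblat's classification); it holds by hypothesis for that class of equations and is not a consequence of (\ref{Eq1}), (\ref{Eq2}), (\ref{Gauss}). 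In fact the low-order jet analysis cannot distinguish the exceptional equation from the others: in the paper, for \emph{both} normal-form classes (Lemmas \ref{LemHLneq0} and \ref{LemHL=0}) the degenerate branch at $\ell=1,0$ dies by the same contradiction $f_{11,z_0}F_{z_2}=0$, and the other branch yields universality --- identically in the two classes. (A side error: in the normal form $f_{22}$ depends only on $u$, since $f_{22,z_1}=0$; it is not a function of $(u,u_x)$ as you assert, and the paper's computations use this.)

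More seriously, reducing to universal $a,b,c$ does not prove non-existence, because universal coefficients are themselves finite-jet coefficients: one must still decide whether (\ref{Eq1}), (\ref{Eq2}), (\ref{Gauss}) admit solutions depending on $x,t$ alone, and this is precisely where the two classes separate. For the class of Lemma \ref{LemHLneq0} the paper's first proposition of Section \ref{ProofEvol} proves inconsistency by a chain of steps your proposal replaces with the words ``over-determined and inconsistent'': differentiating in $z_1$ to express $(a_x,b_x)$ in terms of $(2b,a-c)$ through an invertible constant matrix, differentiating in $z_0$ twice to obtain $a_xc_x-b_x^2=0$ and then $a_xb_t-a_tb_x=0$, solving (\ref{Eq1-2}) for $f_{11}$ and differentiating once more to reach the contradiction $f_{11,z_0}=0$. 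Nothing in your descent supplies this. Symmetrically, for the class of Lemma \ref{LemHL=0}, i.e. equation (\ref{EqException}), the theorem's ``moreover'' clause requires the \emph{positive} statement that universal second fundamental forms actually exist; the paper exhibits them explicitly in Proposition \ref{propabcexpluniversal} ($b=\gamma e^{\pm2(\eta x+\lambda t)}$, with $a$ and $c$ given by (\ref{auniversal}) and (\ref{cuniversal}) on the strip (\ref{strip})) and verifies the converse. So your proposal establishes, in effect, only the implication ``finite jet $\Rightarrow$ universal''; the dichotomy between (\ref{EqException}) and all other second-order evolution equations, which is the substance of Theorem \ref{EvolRes}, is asserted rather than proved, and the mechanism you propose for producing it would not work.
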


Theorem \ref{EvolRes} suggests  that there is no real analogue of the sine-Gordon equation within the class of second-order evolution equations describing $\eta$ pseudo-spherical surfaces, from the perspective of the local isometric immersions of pseudo-spherical surfaces associated to their solutions. Indeed, even though the special class of evolution equations (\ref{EqException}) has the property that the components of the second fundamental forms of the immersions associated to its solutions depend on jets of finite order of $u$, this dependence is given in terms of functions of $x$ and $t$  for all choices of solutions $u$ (see Proposition \ref{propabcexpluniversal}).

 The results for second-order hyperbolic equations (\ref{hyperbolic}) are similar, with the notable exception that they single out the sine-Gordon equation as the only equation, up to constants, for which the second fundamental form of the local isometric immersion is not universal. In order to state these results, we begin by recalling the classification theorem proved by Rabelo and Tenenblat \cite{RabeloTenenblat90} for equations (\ref{hyperbolic}) describing pseudo-spherical surfaces:


\begin{Thm}[Rabelo \& Tenenblat \cite{RabeloTenenblat90}] \label{RTClassification} Let $F$ be a differentiable function defined on an open \linebreak connected subset $U \subset \mathbb{R}^2$. An equation
\begin{equation*}
u_{xt}  = F(u,u_x)
\end{equation*}
describes an $\eta$ pseudo-spherical surface for $\eta \in \mathcal{P}\subset \mathbb{R}$, where $\mathcal{P}$ is a dense subset of $\mathbb{R}$ and  $F$ independent of $\eta$ if, and only if, $F$ satisfies one of the following:
\begin{enumerate}
\item[i)]$F$ is independent of $u_x$ and $F''(u) + \alpha F(u)=0$, $U=\mathbb{R}^2$, $\mathcal{P} = \mathbb{R}\setminus\{0\}$, and $\alpha$ is a non-zero real constant. 

\item[ii)] $F=\nu e^{\delta u}\sqrt{\beta + \gamma u_x^2}$, where $U=\{(u,z) \in \mathbb{R}^2; \beta + \gamma z^2 >0\}$, $\mathcal{P}= \mathbb{R}, \delta, \gamma, \beta, \nu $ are real constants, with $\delta, \gamma, \nu$ nonzero, and $\beta = 0$ when $\gamma=1$; or 

\item[iii)] $F =  \lambda  u + \zeta u_x + \tau$, where $U= \mathbb{R}^2$, $\mathcal{P} = \mathbb{R}\setminus \{0\}$, and $\lambda, \zeta, \tau$ are real constants. 
\end{enumerate}
\end{Thm}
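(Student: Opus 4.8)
The plan is to solve directly the overdetermined system that the structure equations (\ref{struct}) impose on the coefficients of the one-forms (\ref{forms}), under the hypothesis $f_{21}=\eta$. Writing $d\omega^{i}=(D_{x}f_{i2}-D_{t}f_{i1})\,dx\wedge dt$ and expanding the right-hand sides of (\ref{struct}), the three structure equations become
\begin{align*}
D_{x}f_{12}-D_{t}f_{11}&=f_{31}f_{22}-\eta f_{32},\\
D_{x}f_{22}&=f_{11}f_{32}-f_{12}f_{31},\\
D_{x}f_{32}-D_{t}f_{31}&=f_{11}f_{22}-\eta f_{12}.
\end{align*}
Substituting $u_{xt}=F(u,u_{x})$ into $D_{t}f_{i1}=f_{i1,u}u_{t}+f_{i1,u_{x}}F$ and $D_{x}f_{ij}=f_{ij,u}u_{x}+f_{ij,u_{x}}u_{xx}$, I would first separate variables: since $u_{t}$ and $u_{xx}$ are jet coordinates independent of $(u,u_{x})$, their coefficients must vanish identically, which forces $f_{11},f_{31}$ to depend only on $u_{x}$ and $f_{12},f_{22},f_{32}$ to depend only on $u$ (each possibly depending also on $\eta$). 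Writing $z=u_{x}$, what remains is the reduced functional system
\begin{align*}
f_{12,u}\,z-f_{11,z}\,F&=f_{31}f_{22}-\eta f_{32},\\
f_{22,u}\,z&=f_{11}f_{32}-f_{12}f_{31},\\
f_{32,u}\,z-f_{31,z}\,F&=f_{11}f_{22}-\eta f_{12},
\end{align*}
subject to the nondegeneracy condition $f_{11}f_{22}-\eta f_{12}\neq0$.

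Note that $F$ enters only through the products $f_{11,z}F$ and $f_{31,z}F$, so the equation is genuinely encoded by the forms only when at least one of $f_{11,z},f_{31,z}$ is nonzero; the subcase $f_{11,z}\equiv f_{31,z}\equiv0$ makes the structure equations independent of $u_{xt}$ and is discarded. The second step is to exploit that $F$ is independent of $\eta$ while the reduced system must hold for all $\eta$ in the dense set $\mathcal{P}$: comparing the relations at different values of $\eta$ (equivalently, differentiating in $\eta$) separates the $\eta$-independent from the $\eta$-dependent parts and severely constrains how each $f_{ij}$ may depend on $\eta$. This is the mechanism that replaces a naive continuum of possibilities by the discrete list of families in the statement.

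The third step is the case analysis, organized by whether $f_{11,z}$ and $f_{31,z}$ vanish. If the $u_{x}$-dependence is forced to be trivial, so that $F=F(u)$, then the first and third relations express $f_{12}$ and $f_{22}$ as multiples of $F$ and $F'$ — as in the sine-Gordon normalization $f_{11}=0$, $f_{31}=u_{x}$, $f_{32}=0$, $f_{12}=F/\eta$, $f_{22}=F'/\eta$ — and substituting these into the second relation yields the linear second-order ODE $F''+\alpha F=0$ of case (i). Otherwise, solving the first relation for $F$ and imposing both its $\eta$-independence and its compatibility with the third relation forces a separation of variables in $(u,u_{x})$, from which one obtains either the affine family $F=\lambda u+\zeta u_{x}+\tau$ of case (iii) or the factored family $F=\nu e^{\delta u}\sqrt{\beta+\gamma u_{x}^{2}}$ of case (ii), with the stated constraints on the constants. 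The converse is then routine: for each of the three families one writes down explicit coefficients $f_{ij}$ solving the reduced system, establishing sufficiency as well as necessity.

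The main obstacle I anticipate is the bookkeeping of the case analysis in the third step. The branching on $f_{11,z}$ and $f_{31,z}$, the linear dependence or independence of $\{1,z,f_{11}(z),f_{31}(z)\}$ as functions of $z$, and the admissible $\eta$-dependence of the $f_{ij}$ all interact, and completeness of the classification hinges on systematically eliminating every degenerate subcase (proportional coefficients, identically vanishing derivatives, and forms for which $\omega^{1}\wedge\omega^{2}$ degenerates). Keeping the $\eta$-separation clean within each branch is what ultimately guarantees that the three families genuinely exhaust all solutions.
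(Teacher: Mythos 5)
A preliminary point: the paper you were given does not prove Theorem \ref{RTClassification} at all. It is recalled from \cite{RabeloTenenblat90} explicitly ``without proof'', and only the attendant results (the jet-separation lemma and the coefficient tables in Lemmas \ref{LemF}--\ref{LemLin}) are restated because they are needed for Theorem \ref{HyperbRes}. So your proposal has to be measured against what a complete classification proof requires, i.e.\ essentially against the original Rabelo--Tenenblat argument. Your first step is correct and reproduces exactly the preliminary lemma the paper quotes: on solutions the residuals of the structure equations are affine in $u_t$ and $u_{xx}$, so their coefficients must vanish, giving $f_{11,u}=f_{31,u}=0$, $f_{12,u_x}=f_{22,u_x}=f_{32,u_x}=0$, together with $f_{11,u_x}^2+f_{31,u_x}^2\neq 0$ so that the equation is genuinely encoded; your reduced system of three functional equations in $(u,z)$ is also right.

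The genuine gap is your third step, which is where the entire theorem lives. The assertion that solving the first relation for $F$ and imposing $\eta$-independence plus compatibility with the third relation ``forces a separation of variables'' yielding exactly the families (i), (ii), (iii) is the statement to be proved; no derivation is offered, and this is not bookkeeping. Moreover, the case organization you propose --- branching on the vanishing of $f_{11,z}$ and $f_{31,z}$ --- is misaligned with the actual families: in case (i) the admissible forms (\ref{fijF''}) have $f_{11}=-\alpha(Bz_1-AQ)$ and $f_{31}=-\alpha(Az_1-BQ)$ with $\alpha=1/(A^2-B^2)$, so generically $f_{11,z_1}=-\alpha B\neq 0$ and $f_{31,z_1}=-\alpha A\neq 0$ even though $F$ depends on $u$ alone; in families (ii) and (iii) both derivatives are also typically nonzero. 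Thus ``both vanish'' occurs only in the degenerate case you already discard, and ``$F$ independent of $u_x$'' is not the branch you identify it with. The dichotomies that actually drive the classification come from differentiating the reduced system in $u$ and $z$ and analyzing linear dependence relations among $1$, $z$, $f_{11}(z)$, $f_{31}(z)$, $F$ and its derivatives (together with the allowed $\eta$-dependence of the $f_{ij}$); nothing in your plan carries this out. Finally, your treatment of case (i) only checks the single normalization $f_{11}=0$, $f_{31}=z$, $f_{32}=0$, which establishes sufficiency of one example rather than the necessity of $F''+\alpha F=0$ across all admissible choices of forms.
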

The expressions of functions $f_{ij}$ of the 1-forms $\omega^i$ for each equation of Theorem \ref{RTClassification} 
are recalled in Section 4 (Lemmas \ref{LemF}-\ref{LemLin}). We are now ready to state our main result for the case of second-order hyperbolic equations (\ref{hyperbolic}).

\begin{Thm}\label{HyperbRes} Let $F$ be an equation of the form $u_{xt} = F(u,u_x)$ that describes  $\eta$ pseudo-spherical surfaces as in  \cite{RabeloTenenblat90}.  
\begin{enumerate}
\item If $F$ is independent of $u_x$ and satisfies $F''(u) + \alpha F(u)=0$, where $\alpha$ is a positive real constant, then there exists a local isometric immersion  in $\mathbb{R}^3$ of the pseudo-spherical surface determined by a solution $u$, for which the coefficients  of the second fundamental form depend on a  jet of finite  order of $u$ if, and only if, they depend on the jet of order zero.   
\item If  $F= \lambda  u + \zeta u_x + \tau $,  then there exists a local isometric immersion  in $\mathbb{R}^3$ of the pseudo-spherical surface determined by a solution $u$, for which the coefficients  of the second fundamental form depend on a  jet of finite  order of $u$ if, and only if,
 $\lambda$, $\xi$ and $\tau$ do not vanish simultaneously,   
and the coefficients are independent of $u$, that is they are universal functions of $x$ and $t$.
\item For the remaining equations, that is, if $F$ is independent of $u_x$ and satisfies $F''(u) + \alpha F(u)=0$, where $\alpha$ is a negative real constant, $F =\nu e^{\delta u}\sqrt{\beta + \gamma u_x^2} $ and $F=0$,  there is no local isometric immersion of the pseudo-spherical surface determined by a solution $u$,  for which the coefficients of the second fundamental form depend on a jet of finite order of $u$. 
\end{enumerate}
\end{Thm}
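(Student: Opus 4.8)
The plan is to treat the three families of Theorem \ref{RTClassification} by a single mechanism: substitute the explicit coefficients $f_{ij}$, as recalled in Lemmas \ref{LemF}--\ref{LemLin}, into the integrability equations (\ref{Eq1}) and (\ref{Eq2}) together with the Gauss equation (\ref{Gauss}), and then exploit the fact that for a hyperbolic equation $u_{xt}=F(u,u_x)$ the variables $x,t,u,u_x,u_{xx},\dots$ together with $u_t,u_{tt},\dots$ form a system of functionally independent coordinates on the infinite jet, every mixed derivative being recovered from $u_{xt}=F$ by repeated total differentiation. If $a,b,c$ depend on a finite jet, say through $u_{x^m}$ and $u_{t^n}$ at most, then demanding that (\ref{Eq1})--(\ref{Eq2}) hold identically as functions on this jet produces an overdetermined linear system that I would analyze order by order in the free coordinates.

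First I would carry out a reduction of jet order. In (\ref{Eq1})--(\ref{Eq2}) the top coordinates $u_{x^{m+1}}$ and $u_{t^{n+1}}$ can only arise linearly, through $D_x$ acting on the $u_{x^m}$-dependence and $D_t$ acting on the $u_{t^n}$-dependence of $a,b,c$; since the $f_{ij}$ depend at most on $u$ and $u_x$, collecting the coefficients of these free top coordinates yields $f_{11}a_{u_{t^n}}+f_{21}b_{u_{t^n}}=0$, $f_{11}b_{u_{t^n}}+f_{21}c_{u_{t^n}}=0$ and the analogous pair in the $x$-direction. Differentiating (\ref{Gauss}) with respect to the same top coordinate and substituting forces either that the relevant partial derivatives of $a,b,c$ vanish, lowering the order, or that $f_{11}^2a+2f_{11}f_{21}b+f_{21}^2c=0$, i.e. that $(f_{11},f_{21})$ is an asymptotic (null) direction of the second fundamental form. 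I would rule out this degenerate possibility using $ac-b^2=-1$ together with the nondegeneracy $\omega^1\wedge\omega^2\neq 0$. Iterating drives the order down until $a,b,c$ depend only on $x,t,u$, with a preliminary step in case (ii) removing the residual $u_x$-dependence.

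With $a=a(x,t,u)$, $b=b(x,t,u)$, $c=c(x,t,u)$, the equations (\ref{Eq1})--(\ref{Eq2}) become affine in the now-free variables $u_x,u_t$ (and, in case (ii), in the radical $\sqrt{\beta+\gamma u_x^2}$). Collecting their coefficients converts the two equations into a closed system of first-order partial differential equations in $(x,t,u)$ for $a,b,c$, and eliminating the $x,t$-derivatives between them and the $u$-derivative of (\ref{Gauss}) produces a second-order linear ordinary differential equation in $u$ whose coefficient is governed by $F''/F=-\alpha$ in case (i) and by the explicit exponential data in case (ii). In the linear case (iii) the same elimination instead yields $a_u=b_u=c_u=0$ directly, so the immersion is universal and exists precisely when $(\lambda,\zeta,\tau)\neq(0,0,0)$; this gives part (2), while $F=0$ falls to part (3).

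The decisive and hardest step is the analysis of this residual ordinary differential equation, since it is exactly where the trichotomy of the theorem is decided. When $\alpha>0$ it admits bounded trigonometric solutions compatible with (\ref{Gauss}), realizing the order-zero dependence exhibited by the sine-Gordon equation in the Introduction; here I would show that compatibility with the $x,t$-equations forces precisely this order-zero dependence, proving part (1). When $\alpha<0$, for the exponential family (ii), and for $F=0$, the corresponding solutions grow without the bound imposed by $ac-b^2=-1$, or the source determinants in (\ref{Eq1})--(\ref{Eq2}) cannot be matched, and in each subcase I would exhibit the resulting incompatibility with the Gauss equation, establishing part (3). The main obstacle is thus not the reduction itself but verifying, cleanly and uniformly across the subcases, that the Gauss constraint $ac-b^2=-1$ is violated in exactly the obstructed families while remaining satisfiable only in the $\alpha>0$ and linear cases.
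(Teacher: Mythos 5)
Your reduction contains a fatal step: you propose to rule out the ``degenerate'' possibility $f_{11}^2a+2f_{11}f_{21}b+f_{21}^2c=0$ using the Gauss equation (\ref{Gauss}) and $\omega^1\wedge\omega^2\neq 0$, but this cannot be done, and the branch you want to discard is exactly where part (1) of the theorem lives. The condition says that $\partial_x$, whose coframe components are $(f_{11},f_{21})=(f_{11},\eta)$, is an asymptotic direction of the immersion; since the surface has $K=-1<0$, asymptotic directions exist at every point, and combining the condition with $ac-b^2=-1$ produces no contradiction but simply $(f_{11}a/\eta+b)^2=1$, i.e. $b=\pm 1-(f_{11}/\eta)a$ and $c=(f_{11}/\eta)^2a\mp 2f_{11}/\eta$, as in (\ref{b=c=Case1}). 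The paper keeps this branch alive (Lemma \ref{Claim I}) and shows it is consistent precisely for $F''+\alpha F=0$ with $\alpha>0$ and $f_{ij}$ as in (\ref{fijF''QA}), where it yields the jet-order-zero coefficients (\ref{newabc}) --- the sine-Gordon-type immersions asserted in part (1). Indeed, for the classical sine-Gordon forms (\ref{omega1})--(\ref{omega3}) of the Introduction one checks directly that $a=\tan(u/2)$, $b=0$, $c=-\cot(u/2)$, $f_{11}=\cos(u/2)$, $f_{21}=\sin(u/2)$ make this quadratic expression vanish identically. Had your elimination of the degenerate case been correct, you would have concluded that case (i) admits no finite-jet immersion at all, contradicting the very statement you are proving.

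There is a second, structural, inconsistency. In the non-degenerate branch your own mechanism kills more than you claim: since $u=w_0$, the coefficient of the free variable $u_t$ in (\ref{Eq1k})--(\ref{Eq2k}) is $f_{11}a_u+\eta b_u$ (and similarly with $b,c$), so iterating the argument forces $a_u=b_u=c_u=0$ as well, leaving $a,b,c$ functions of $x,t$ only. There is then no ``residual second-order ODE in $u$'' left to analyze, so your proposed growth-versus-Gauss dichotomy for separating $\alpha>0$ from $\alpha<0$, the exponential family, and $F=0$ has nothing to act on; conversely, if $a_u\neq 0$ is to survive, the degenerate case must hold, which you have excluded. In the paper the trichotomy is instead decided by three separate analyses: the degenerate branch (Lemma \ref{Claim I}), an analogous $x$-direction degenerate branch $c+(f_{12}/f_{22})^2a+2(f_{12}/f_{22})b=0$ which likewise cannot be excluded a priori and is disposed of case by case in Lemma \ref{Claim II} (using $a\neq 0$ from Lemma \ref{Lem_ac}), and finally, for universal $a,b,c$, inconsistency in cases (i) and (ii) (Lemma \ref{CoeffUniversalEqTypei}, Proposition \ref{Propii}) versus the explicit exponential solutions (\ref{abcLin2}), (\ref{abcLin3}) in case (iii) (Proposition \ref{Propiii}). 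None of these steps is reachable from your outline as written.
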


The coefficients of the second fundamental form of the local isometric immersions stated in Theorem  \ref{HyperbRes} are given explicitly in Section 4 (Propositions  \ref{Propi} and \ref{Propiii}). 
Theorem \ref{HyperbRes} shows likewise that when viewed through the perspective of the local isometric immersions associated to its solutions, the sine-Gordon equation occupies a special position within the class of hyperbolic equations (\ref{hyperbolic}) as the unique equation, up to normalization constants, for which the coefficients of the second fundamental form of the local isometric immersion of the surface determined by a solution $u$, depends on a jet of finite order of $u$, without being universal, {\it i.e.} independent of $u$. 


While Theorems \ref{EvolRes} and \ref{HyperbRes} give a complete answer to the general question we have raised in this paper in the case of second-order evolution equations (\ref{evolution}) and second-order hyperbolic equations (\ref{hyperbolic}), the question still remains open for all the other classes of equations describing pseudo-spherical surfaces.  We believe that it should be possible to extend the proof of Theorem \ref{EvolRes} to the case of $k$-th order evolution equations with $k\geq 3$ in order to obtain a similar result to the effect that all the second-fundamental forms that depend only on jets of finite order of the solutions of evolution equation should be universal.

Our paper is organized as follows. In Section \ref{ClassifEvol}, we recall the results of Chern and Tenenblat \cite{ChernTenenblat} on the classification of evolution equations describing pseudo-spherical surfaces and use these to give an analogue of the normal forms of Theorem \ref{RTClassification} for the case of second-order evolution equations (\ref{evolution}). These normal forms are then used as the starting point in Section \ref{ProofEvol} of the proof of Theorem \ref{EvolRes}. Section \ref{HyperbProof} is devoted to the proof of Theorem \ref{HyperbRes}. The proofs involve a careful analysis of the possible dependence on higher-order jets of $u$ of the solutions of the system of differential constraints (\ref{Eq1}) and  (\ref{Eq2}) that must be satisfied by components $a,b,c$ of the second fundamental form, together with the algebraic constraint given by the Gauss equation (\ref{Gauss}).

\section{The classification of second-order evolution equations describing $\eta$ pseudo-spherical surfaces}\label{ClassifEvol}
In \cite{ChernTenenblat}, Chern and Tenenblat obtained necessary and sufficient conditions in the form of differential equations on the functions $f_{ij}$ for the existence of an evolution equation of the form 
\begin{equation}\label{UtOrderk}
\frac{\partial u}{\partial t}=F(u,\ldots,\frac{\partial ^{k}u}{\partial x^{k}}),
\end{equation}
which describes $\eta$ pseudo-spherical surfaces, i.e., with $f_{21} = \eta$, where $\eta$ is a nonzero parameter. They also performed a complete classification of the evolution equations of the form (\ref{UtOrderk}) which describe  $\eta$ pseudo-spherical surfaces.  
They obtained four classes of evolution equations (Theorems 2.2 to 2.5 in \cite{ChernTenenblat}). These four classes of equations are determined algebraically  by $f_{11}, f_{31}, f_{22}$ and their derivatives, up to some differential constraints.  In what follows, we consider only second-order evolution equations of the form (\ref{UtOrderk}) and solve the differential constraints that $f_{11}, f_{31}$ and $f_{22}$ must satisfy in order for (\ref{UtOrderk}) to describe $\eta$ pseudo-spherical surfaces. We shall deal with two of the  four classes (Theorems 2.2 and 2.4 in \cite{ChernTenenblat}) since the two remaining classes of evolution equations (Theorems 2.3 and 2.5 in \cite{ChernTenenblat}) lead to evolution equations of the first order, when $k=2$. 

It will be convenient to introduce the following notation for the spatial derivatives of $u$ (used in \cite{ChernTenenblat} and also in \cite{KamranTenenblat}),
\begin{equation*}
z_{i} = \dfrac{\partial^{i}u}{\partial x^{i}}, \quad 0 \leqslant i \leqslant k,
\end{equation*}
and to view $(x, t, z_{0}, z_{1}, \dots, z_{k})$ as local coordinates in an open subset $U$ of a manifold. 

\begin{Lem}[Chern \& Tenenblat \cite{ChernTenenblat}] Consider a second-order evolution equation of the form $z_{0,t}= F(z_{0},z_{1},z_{2})$ which describes an $\eta$ pseudo-spherical surface with associated forms $\omega^{i} = f_{i1}dx + f_{i2}dt$. If $f_{ij}$ are differentiable functions of $z_{0}, z_{1}, z_{2}$, then
\begin{eqnarray}\label{NC1}
f_{ij,z_2}=0, &  &f_{11,z_{1}} =f_{31,z_{1}} =f_{22,z_{1}} =0, 
\\\label{NC2}
& & f_{11,z_{0}}^2 + f_{31,z_{0}}^2 \neq 0. 
\end{eqnarray}
\end{Lem}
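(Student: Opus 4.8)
The plan is to extract the stated necessary conditions directly from the structure equations (\ref{struct}), specializing to the second-order case $z_{0,t}=F(z_0,z_1,z_2)$ and using the $\eta$-assumption $f_{21}=\eta$. The three structure equations, once the $f_{ij}$ are taken to be functions of $(x,t,z_0,z_1,z_2)$ only, become polynomial identities in the jet variables $z_{0,t},z_{1},z_{2},z_3,\dots$ (the $t$-derivatives being reducible via the evolution equation). The strategy is to compute $d\omega^i$ explicitly, substitute into (\ref{struct}), and then read off coefficients of the independent monomials $dx\wedge dt$ multiplied by the various jet coordinates; the resulting algebraic/differential relations will force (\ref{NC1}) and (\ref{NC2}).

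First I would write out each exterior derivative. Since $\omega^i=f_{i1}\,dx+f_{i2}\,dt$ with $f_{ij}=f_{ij}(x,t,z_0,z_1,z_2)$, we have
\begin{equation*}
d\omega^i=\Big(f_{i2,x}-f_{i1,t}+\sum_{p=0}^{2}\big(f_{i2,z_p}z_{p,x}-f_{i1,z_p}z_{p,t}\big)\Big)\,dx\wedge dt,
\end{equation*}
where $z_{p,x}=z_{p+1}$ and the time derivatives $z_{p,t}=D_x^{p}F$ are eliminated using the evolution equation. The key observation is that $D_x^{2}F$ is the highest-order term appearing, and it introduces $z_3$ (through $D_xF$ containing $z_2$, so $D_x^2F$ containing $z_3$) and a term in $z_2$ that is the only source of $z_3$-dependence in the whole system. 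The right-hand sides $\omega^3\wedge\omega^2$, $\omega^1\wedge\omega^3$, $\omega^1\wedge\omega^2$ are, by contrast, free of any jet derivatives beyond $z_2$, being algebraic in the $f_{ij}$. Matching the two sides therefore forces the coefficients of the genuinely higher jet variables on the left to vanish.

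The concrete mechanism is a cascade by jet order. The coefficient of $z_3$ on the left of each structure equation is (up to a nonzero factor) $f_{i1,z_2}$; since the right-hand sides contain no $z_3$, this yields $f_{i1,z_2}=0$, and together with the absence of $z_2$-dependence forced at the next level one obtains $f_{ij,z_2}=0$ for all $i,j$. Feeding this back, the coefficients of $z_2$ must then match; because $z_{1,x}=z_2$ enters only through $f_{i2,z_1}$ and $f_{i1,z_1}$ while $z_{1,t}$ and $z_{0,t}$ have already been reduced, collecting the $z_2$-coefficient in the three equations pins down $f_{11,z_1}=f_{31,z_1}=f_{22,z_1}=0$. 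This is the delicate bookkeeping step and the main obstacle: one must be careful that after eliminating $t$-derivatives via $F$, no hidden $z_2$- or $z_1$-dependence is reintroduced through $F_{z_2},F_{z_1}$, so the vanishing conditions decouple cleanly rather than mixing with derivatives of $F$. Finally, the nondegeneracy (\ref{NC2}), $f_{11,z_0}^2+f_{31,z_0}^2\neq0$, follows from the requirement that the evolution equation be genuinely of second order and that the metric be nondegenerate, $\omega^1\wedge\omega^2\neq0$: if both $f_{11,z_0}$ and $f_{31,z_0}$ vanished then, combined with the already-established vanishing of the $z_1$- and $z_2$-derivatives, the surviving structure equations could not produce a term matching $F$ at second order, so the equation would degenerate, contradicting the hypothesis. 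I expect the coefficient-matching cascade — tracking exactly which jet variable each term contributes and ensuring the reductions via $F$ do not contaminate the lower-order coefficients — to be where the real work lies, while the conclusions themselves drop out once the monomials are correctly separated.
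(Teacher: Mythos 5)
Your overall strategy --- compute $d\omega^i=(D_xf_{i2}-D_tf_{i1})\,dx\wedge dt$, eliminate the $t$-derivatives through $z_{0,t}=F$, $z_{1,t}=D_xF$, $z_{2,t}=D_x^2F$, and match coefficients of the jet variables, which are free at a point for solutions of an evolution equation --- is the right one (note that the paper itself gives no proof: the lemma is quoted from Chern--Tenenblat \cite{ChernTenenblat}, so your attempt must be measured against what a complete proof requires). But first there is an off-by-one error: since $F$ depends on $z_2$, $D_xF=F_{z_0}z_1+F_{z_1}z_2+F_{z_2}z_3$ already contains $z_3$, and $D_x^2F$ contains $F_{z_2}z_4$; it is the coefficient of $z_4$ (not of $z_3$) in the first and third structure equations that yields $f_{11,z_2}=f_{31,z_2}=0$, while the $z_3$-coefficient of $d\omega^2=\omega^1\wedge\omega^3$ (no $D_t$-term there, because $f_{21}=\eta$ is constant) gives $f_{22,z_2}=0$.

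The serious problem is the claimed cascade ``first $f_{ij,z_2}=0$ for all $i,j$, then the $z_1$-conditions,'' which is circular. Once $f_{11,z_2}=f_{31,z_2}=f_{22,z_2}=0$ are known, the coefficient of $z_3$ in the first and third structure equations gives exactly
\begin{equation*}
f_{12,z_2}=f_{11,z_1}F_{z_2},\qquad f_{32,z_2}=f_{31,z_1}F_{z_2},
\end{equation*}
and nothing stronger: since $F_{z_2}\neq0$, the vanishing of $f_{12,z_2},f_{32,z_2}$ is \emph{equivalent to} the vanishing of $f_{11,z_1},f_{31,z_1}$, so the two halves of (\ref{NC1}) that you present as successive stages are the same statement, and your derivation of the second half presupposes the first. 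This coupling is precisely what you flag as ``delicate bookkeeping'' and then dismiss by asserting that the conditions ``decouple cleanly''; they do not, and breaking the coupling is where the entire content of the Chern--Tenenblat proof lies. Concretely, one must integrate the relations above, writing $f_{12}=f_{11,z_1}F+m(z_0,z_1)$ and $f_{32}=f_{31,z_1}F+n(z_0,z_1)$, substitute back into the three reduced structure equations, and play the resulting identities against one another. For instance the second structure equation becomes
\begin{equation*}
F\bigl(f_{11}f_{31,z_1}-f_{31}f_{11,z_1}\bigr)+f_{11}n-f_{31}m-f_{22,z_0}z_1-f_{22,z_1}z_2=0,
\end{equation*}
whose $z_2$-derivative gives $f_{22,z_1}=F_{z_2}\bigl(f_{11}f_{31,z_1}-f_{31}f_{11,z_1}\bigr)$ --- still not a vanishing statement, and one cannot finish by arguing ``left side $z_2$-independent, right side not'': the equations that actually arise in the classification, (\ref{EET1}) and (\ref{EET2}), are affine in $z_2$, so $F_{z_2}$ is $z_2$-independent and this degenerate case is exactly the one that matters. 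Your proposal supplies none of this analysis. Finally, your argument for (\ref{NC2}) has the right idea --- if $f_{11,z_0}=f_{31,z_0}=0$ then, granted (\ref{NC1}), no $D_t$-term survives, the structure equations no longer involve $z_{0,t}=F$ at all, and hence cannot hold precisely on the solutions of the equation --- but as stated it rests on (\ref{NC1}), which you have not established.
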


In order to state the results, we introduce the following notation
\begin{eqnarray}\label{LHPM}
\begin{array}{llllll}H &=& f_{11}f_{11,z_{0}} - f_{31}f_{31,z_{0}},& L &=& f_{11}f_{31,z_{0}} - f_{31}f_{11,z_{0}}, \\P&=&f_{11,z_{0}}f_{31,z_{0}z_{0}} -  f_{31,z_{0}}f_{11,z_{0}z_{0}},  & M&=&f_{31,z_{0}}^2 - f_{11,z_{0}}^2.\end{array}
\end{eqnarray}

\begin{Lem}\label{LemHLneq0}Let $f_{ij}$, $1\leqslant i \leqslant 3$, $1\leqslant j \leqslant 2$, be differentiable functions of $z_{0}, z_{1}, z_{2}$ such that (\ref{NC1}) and (\ref{NC2}) hold and $f_{21}=\eta$ a nonzero parameter.  Suppose $HL\neq 0$. Then $z_{0,t} = F(z_{0},z_{1},z_{2})$ describes an $\eta$ pseudo-spherical surface with associated $1$-forms $\omega^i = f_{i1}dx + f_{i2}dt$, if and only if
\begin{equation}\label{EET1}
F = \dfrac{\mp f_{22,z_{0}}}{\eta\sqrt{1-\alpha^2}f_{11,z_{0}}}z_{2}\mp \dfrac{f_{22,z_{0}z_{0}}}{\eta\sqrt{1-\alpha^2}f_{11,z_{0}}}z_{1}^2 +\Big( \dfrac{(\eta^2 + f_{11}^2 - f_{31}^2)f_{22,z_{0}}}{\eta \Big[(1-\alpha^2)f_{11} \mp \alpha \eta \sqrt{1-\alpha^2}\Big]f_{11,z_{0}}} + \dfrac{f_{22}}{\eta}\Big)z_{1},
\end{equation}
and
\begin{eqnarray}\label{f31-1}
f_{31} &=& \alpha f_{11} \pm \eta \sqrt{1-\alpha^2},\\\label{f12-1}
f_{12} &=& \dfrac{f_{11}f_{22}}{\eta} \mp \dfrac{f_{22,z_{0}}}{\eta\sqrt{1-\alpha^2}}z_{1}, \\\label{f32-1}
f_{32} &=&  \dfrac{(\alpha f_{11} \pm \eta\sqrt{1-\alpha^2})f_{22}}{\eta} \mp \dfrac{\alpha f_{22,z_{0}}}{\eta\sqrt{1-\alpha^2}}z_{1}, 
\end{eqnarray}
where $f_{22,z_{0}}\neq 0$, $f_{11,z_{0}}\neq 0$, and $\alpha^2 < 1$.
\end{Lem}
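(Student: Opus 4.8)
The plan is to reduce the structure equations (\ref{struct}) to a system of polynomial identities in the jet variables and then solve that system. Writing $\omega^i=f_{i1}\,dx+f_{i2}\,dt$ with $f_{21}=\eta$ and using (\ref{NC1}) (so $f_{11},f_{31},f_{22}$ depend only on $z_0$ and $f_{12},f_{32}$ only on $z_0,z_1$), I would compute each $d\omega^i$ with $dz_0=z_1\,dx+F\,dt$ and $dz_1=z_2\,dx+(D_xF)\,dt$; the term $D_xF$ (which alone would involve $z_3$) drops out because $dt\wedge dt=0$, so (\ref{struct}) is equivalent to the three scalar identities in $(z_0,z_1,z_2)$
\begin{align*}
f_{12,z_0}z_1 + f_{12,z_1}z_2 - f_{11,z_0}F &= f_{31}f_{22} - \eta f_{32}, \tag{S1}\\
f_{22,z_0}z_1 &= f_{11}f_{32} - f_{31}f_{12}, \tag{S2}\\
f_{32,z_0}z_1 + f_{32,z_1}z_2 - f_{31,z_0}F &= f_{11}f_{22} - \eta f_{12}. \tag{S3}
\end{align*}
(Equivalently, one may quote the $k=2$ specialization of Theorem~2.2 of \cite{ChernTenenblat}.) The two implications are then proved from (S1)--(S3): the forward direction is the substantive one, while the converse amounts to substituting (\ref{EET1})--(\ref{f32-1}) back in and checking the three identities.

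For the forward direction, I would first pin down the $z_1,z_2$ dependence of $f_{12},f_{32}$. Differentiating (S1) and (S3) in $z_2$ (their right-hand sides being $z_2$-free) gives $f_{12,z_1}=f_{11,z_0}F_{z_2}$ and $f_{32,z_1}=f_{31,z_0}F_{z_2}$, hence $f_{31,z_0}f_{12,z_1}=f_{11,z_0}f_{32,z_1}$; differentiating (S2) in $z_1$ gives $f_{11}f_{32,z_1}-f_{31}f_{12,z_1}=f_{22,z_0}$. This is a linear system for $(f_{12,z_1},f_{32,z_1})$ whose determinant is exactly $L$, so Cramer's rule and $L\neq0$ yield $f_{12,z_1}=f_{22,z_0}f_{11,z_0}/L$ and $f_{32,z_1}=f_{22,z_0}f_{31,z_0}/L$, both functions of $z_0$ alone. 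Thus $f_{12}=A(z_0)z_1+g(z_0)$ and $f_{32}=B(z_0)z_1+h(z_0)$ with $A=f_{22,z_0}f_{11,z_0}/L$, $B=f_{22,z_0}f_{31,z_0}/L$; since $F_{z_2}=f_{22,z_0}/L$, the equation is genuinely second order precisely when $f_{22,z_0}\neq0$.

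The crux comes next. First I would rule out the branch $f_{11,z_0}\equiv0$: there (\ref{NC2}) forces $f_{31,z_0}\neq0$ and $f_{11}$ constant, and comparing the $z_1$-coefficient of (S1) with the relations coming from (S2) (for a genuine second-order equation, so $f_{22,z_0}\neq0$) forces $f_{11}^2=-\eta^2$, which is impossible over $\mathbb{R}$; hence $f_{11,z_0}\neq0$. Then I solve (S1) and (S3) for $F$, obtaining two expressions each quadratic in $z_1$ and linear in $z_2$, and match coefficients of $f_{31,z_0}(f_{11,z_0}F)=f_{11,z_0}(f_{31,z_0}F)$. The $z_1^2$-coefficient gives $f_{31,z_0}A'=f_{11,z_0}B'$; combining this with the $z_2$-coefficient $f_{31,z_0}A=f_{11,z_0}B$ and its derivative, and clearing the factors $L$ and $f_{22,z_0}$, collapses everything to $P=0$ with $P$ as in (\ref{LHPM}), which says exactly that $\alpha:=f_{31,z_0}/f_{11,z_0}$ is constant; hence $f_{31}=\alpha f_{11}+c$ with $c$ constant. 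With this one computes $L=-c\,f_{11,z_0}$ and $H=f_{11,z_0}\bigl(f_{11}(1-\alpha^2)-\alpha c\bigr)$. Matching the $z_1$-free coefficients of the two $F$-expressions, together with the $z_1$-free part of (S2) (namely $f_{11}h=f_{31}g$), gives a linear system for $(g,h)$ of determinant $-\eta H$, so $H\neq0$ yields $g=f_{11}f_{22}/\eta$ and $h=f_{31}f_{22}/\eta$. Finally the $z_1$-coefficient of the two $F$-expressions, using $f_{22,z_0}\neq0$, forces $c^2=\eta^2(1-\alpha^2)$, whence $c=\pm\eta\sqrt{1-\alpha^2}$ and, since $c\neq0$ (because $L\neq0$), $\alpha^2<1$. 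These are precisely (\ref{f31-1})--(\ref{f32-1}), and back-substitution into (S1) produces (\ref{EET1}).

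I expect the main obstacle to be the coefficient bookkeeping in this last stage --- in particular extracting $P=0$ from the $z_1^2$-coefficient cleanly, \emph{without} circular division, by keeping $L$ and $f_{22,z_0}$ as multiplicative factors until the very end, and separately disposing of the degenerate branch $f_{11,z_0}\equiv0$. It is worth noting where each hypothesis enters: $L\neq0$ is used for the Cramer step and for $c\neq0$, while $H\neq0$ is used only to solve for $(g,h)$, so the full force $HL\neq0$ is needed. Once the forward direction is complete, the converse is routine: with $f_{21}=\eta$ and $f_{11},f_{22}$ arbitrary subject to $f_{11,z_0}\neq0$, $f_{22,z_0}\neq0$ and $\alpha^2<1$, one substitutes the closed forms (\ref{EET1})--(\ref{f32-1}) into (S1)--(S3) and verifies the three identities directly.
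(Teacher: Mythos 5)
Your proposal is correct: I checked the reduction of (\ref{struct}) to your three scalar identities, the Cramer step (whose determinant is indeed $L$), the extraction of $P=0$ by cross-multiplication without division, the $(g,h)$ system (whose determinant works out to $\eta H/f_{11,z_{0}}$ rather than $-\eta H$, but is in any case nonzero exactly because $H\neq 0$), and the final relation $c^{2}=\eta^{2}(1-\alpha^{2})$; back-substitution then reproduces (\ref{EET1})--(\ref{f32-1}) exactly. The route differs from the paper's mainly in its starting point: the paper does not rederive the structure equations but quotes Theorem 2.2 of \cite{ChernTenenblat} specialized to $k=2$ --- the closed-form expression for $F$, the formulas for $f_{12},f_{32}$, and the two differential constraints (2.12) --- and then solves those constraints: the $j=1$ constraint gives $Pf_{22,z_{0}}=0$, hence $f_{22,z_{0}}\neq 0$ and $P=0$; differentiating the $j=0$ constraint in $z_{1}$ gives $M=-L^{2}/\eta^{2}$; then $P=0$ yields $f_{31}=\alpha f_{11}+\beta$, the identity $L=-\beta f_{11,z_{0}}\neq 0$ yields $\beta\neq 0$ and $f_{11,z_{0}}\neq 0$, and $M=-L^{2}/\eta^{2}$ becomes $\beta^{2}=\eta^{2}(1-\alpha^{2})$. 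Your coefficient-matching argument recovers exactly the same algebraic skeleton (your $c^{2}=\eta^{2}(1-\alpha^{2})$ is the paper's $M=-L^{2}/\eta^{2}$ rewritten through the affine relation), so the two proofs are mathematically parallel; what yours buys is self-containedness and precision about hypotheses --- it shows explicitly where $L\neq 0$ (Cramer step, $c\neq 0$) and $H\neq 0$ (solving for the $z_{1}$-free parts $g,h$) enter, and it disposes of the degenerate branch $f_{11,z_{0}}\equiv 0$ by an explicit contradiction ($f_{11}^{2}=-\eta^{2}$), a branch which in the paper is excluded only implicitly, since its assertion that $P=0$ implies $f_{31}=\alpha f_{11}+\beta$ presupposes $f_{11,z_{0}}\neq 0$ (the paper's escape hatch being that $M=-L^{2}/\eta^{2}<0$ forces $f_{11,z_{0}}\neq 0$). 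What the paper's version buys is brevity and consistency with the general-$k$ machinery of \cite{ChernTenenblat} used throughout the classification literature.
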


\begin{proof}If $k=2$, Theorem 2.2 in \cite{ChernTenenblat} gives the general expression of second-order evolution equations $z_{0,t}=F$ which describe $\eta$ pseudo-spherical surfaces, namely    
\begin{equation}
   \label{2.10CT}
F=\dfrac{1}{L} \sum_{i=0}^1 z_{i+1}B_{z_i} +\dfrac{1}{HL}\left(-z_1\dfrac{L}{\eta}+f_{31}^2-f_{11}^2\right)z_1 A^0  
+\dfrac{B}{HL}(z_1M+\eta L)+z_1\dfrac{f_{22}}{\eta},  
\end{equation}
where  
\[
B=f_{22,z_0}z_1, \quad A^0=\dfrac{1}{L}(-z_1 P+\eta M)B_{z_1}+f_{22,z_1}H,
\]
where the functions $f_{12}$ and $f_{32}$ are given by  
\begin{eqnarray}
f_{12}=\dfrac{f_{11}f_{22}}{\eta}+\frac{1}{H}\left(-\dfrac{f_{11}A^0}{\eta}z_1+f_{31,z_0}B\right), \nonumber \\
   \label{2.11CT}\\ 
f_{32}=\dfrac{f_{31}f_{22}}{\eta}+\frac{1}{H}\left(-\dfrac{f_{31}A^0}{\eta}z_1+f_{11,z_0}B\right), \nonumber
\end{eqnarray}
 and where (2.12) in \cite{ChernTenenblat} gives  
 two differential equations  that the functions $f_{11}, f_{31}$ and $f_{22}$ must satisfy. When $k=2$, these equations  reduce to  
\begin{eqnarray}
& \dfrac{L}{\eta}f_{22,z_0}-\dfrac{L}{\eta}\left(z_1\dfrac{A^0}{H}\right)_{z_0} +A^0 +\dfrac{M}{H}B_{z_0}+\dfrac{B}{H^2}(LP+M^2)=0, 
   \label{2.12CTj=0} \\
& \dfrac{L}{\eta}f_{22,z_1}-\dfrac{L}{\eta}\left(z_1\dfrac{A^0}{H}\right)_{z_1} +\dfrac{M}{H}B_{z_1}=0.\label{2.12CTj=1}
\end{eqnarray}
 
 If $L\neq 0$, then the differential equation (\ref{2.12CTj=1}) leads to $Pf_{22,z_{0}}=0$. The vanishing of $f_{22,z_{0}}$ contradicts the fact that $F$ is a second order evolution equation. We conclude then that
\begin{eqnarray}
f_{22,z_{0}}\neq 0, \quad P=0.
\end{eqnarray}
Differentiating (\ref{2.12CTj=0})  with respect to $z_{1}$ leads to $-L(M/HL)_{z_{0}} +M^2/L^2 = 0$
and hence, the differential equation (\ref{2.12CTj=0})  leads to
\begin{equation}\label{M}
M=-\dfrac{L^2}{\eta^2}.
\end{equation}
The vanishing of $P$ implies that 
\begin{equation}\label{f31alphabeta}
f_{31} = \alpha f_{11} + \beta, \quad   \text{ where }  \alpha, \beta \in \mathbb{R}.
\end{equation} We have then
\begin{eqnarray}\label{HL}
L= -\beta f_{11,z_{0}}, \quad H= [(1-\alpha^2)f_{11} - \alpha \beta]f_{11,z_{0}}.
\end{eqnarray}
The non-vanishing of $L$ implies that $\beta \neq 0$ and $f_{11,z_{0}} \neq 0$. Substituting (\ref{f31alphabeta}) in  (\ref{M}) and  in the expression of $M$ as in (\ref{LHPM}) leads to 
\begin{equation}\label{betaetaalpha}
\beta^2 = \eta^2(1-\alpha^2).
\end{equation}
The non-vanishing of $\beta$ and $\eta$, and the latter equation imply that $\alpha \in (-1, 1)$. Finally, substituting $\beta = \pm \eta \sqrt{1-\alpha^2}$, $P=0$ and (\ref{f31alphabeta}) in the expressions (\ref{2.10CT}) and (\ref{2.11CT})  
leads to expressions (\ref{EET1}), (\ref{f31-1}), (\ref{f12-1}), and (\ref{f32-1}).\end{proof}

If $HL = 0$, then there are three classes of evolution equations to consider, which are given  in Theorems 2.3-2.5 in \cite{ChernTenenblat}. However, $F$ is of second order only when $H=L=0$, as  in Theorem 2.4 in \cite{ChernTenenblat}.

\begin{Lem}\label{LemHL=0}Let $f_{ij}$, $1\leqslant i \leqslant 3$, $1\leqslant j \leqslant 2$, be differentiable functions of $z_{0}, z_{1}, z_{2}$ such that (\ref{NC1}) and (\ref{NC2}) hold and $f_{21}=\eta$ a nonzero parameter.  Suppose $f_{31} = \pm f_{11}\neq 0$. Then $z_{0,t} = F(z_{0},z_{1},z_{2})$ describes an $\eta$ pseudo-spherical surface with associated $1$-forms $\omega^i = f_{i1}dx + f_{i2}dt$, if and only if $f_{22} = \lambda$, where $\lambda$ is constant, $f_{32} = \pm f_{12}$, and 
\begin{equation}\label{EET2}
F=\dfrac{f_{12,z_{1}}}{f_{11,z_{0}}}z_{2} + \dfrac{f_{12,z_{0}}}{f_{11,z_{0}}} z_{1}\mp \dfrac{\lambda f_{11} - \eta f_{12}}{f_{11,z_{0}}}.
\end{equation}
\end{Lem}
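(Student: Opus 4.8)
The plan is to argue directly from the structure equations (\ref{struct}) rather than from the general formulas of \cite{ChernTenenblat}. Writing $\omega^i = f_{i1}dx + f_{i2}dt$ and extracting the coefficient of $dx\wedge dt$, the equations (\ref{struct}) become, after using $f_{21}=\eta$ constant,
\begin{align*}
D_x f_{12} - D_t f_{11} &= f_{31}f_{22} - \eta f_{32},\\
D_x f_{22} &= f_{11}f_{32} - f_{31}f_{12},\\
D_x f_{32} - D_t f_{31} &= f_{11}f_{22} - \eta f_{12}.
\end{align*}
By (\ref{NC1}) the functions $f_{11},f_{31},f_{22}$ depend on $z_0$ alone while $f_{12},f_{32}$ depend only on $(z_0,z_1)$, so that $D_t f_{11}=f_{11,z_0}F$, $D_t f_{31}=f_{31,z_0}F$, $D_x f_{22}=f_{22,z_0}z_1$, and $D_x f_{12}=f_{12,z_0}z_1+f_{12,z_1}z_2$, and similarly for $D_x f_{32}$. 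I would then substitute the hypothesis $f_{31}=\epsilon f_{11}$ with $\epsilon=\pm1$, noting that then $f_{31,z_0}=\epsilon f_{11,z_0}$ and that (\ref{NC2}) forces $f_{11,z_0}\neq0$, while $f_{31}=\pm f_{11}\neq0$ gives $f_{11}\neq0$.

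The heart of the argument is an elimination of $F$. The middle equation reduces to $f_{22,z_0}z_1=f_{11}(f_{32}-\epsilon f_{12})$, so setting $g:=f_{32}-\epsilon f_{12}$ we get $g=(f_{22,z_0}/f_{11})\,z_1=:h(z_0)\,z_1$, which is linear in $z_1$. Forming the third structure equation minus $\epsilon$ times the first, the two occurrences of $F$ cancel identically because $f_{31,z_0}=\epsilon f_{11,z_0}$, and a short computation collapses the result to $g_{z_0}z_1+g_{z_1}z_2=\epsilon\eta\,g$. Substituting $g=h(z_0)z_1$ gives $h'(z_0)z_1^2+h(z_0)z_2=\epsilon\eta\,h(z_0)z_1$; matching the coefficient of the independent jet coordinate $z_2$ forces $h\equiv0$. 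Hence $f_{22,z_0}=0$, i.e.\ $f_{22}=\lambda$ is constant, and $g=0$, i.e.\ $f_{32}=\pm f_{12}$. Feeding these back into the first structure equation and solving for $F$ yields precisely (\ref{EET2}), the sign $\mp$ being dictated by $\epsilon$.

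For the converse I would substitute $f_{22}=\lambda$, $f_{32}=\epsilon f_{12}$, $f_{31}=\epsilon f_{11}$, and the expression (\ref{EET2}) for $F$ back into the three structure equations and check that all hold identically: the middle one is trivially $0=0$, the first holds by the very definition of $F$, and the third is $\epsilon$ times the first; together with $\omega^1\wedge\omega^2=(\lambda f_{11}-\eta f_{12})\,dx\wedge dt\neq0$ this confirms that the structure equations hold exactly on solutions of $z_{0,t}=F$. I expect the only real obstacle to be the elimination step: one must spot that the combination ``third $-\,\epsilon\cdot$ first'' annihilates the highest-order unknown $F$ and decouples the system into a single relation for $g$, after which separating the independent variables $z_1,z_2$ finishes the proof; the main bookkeeping hazard is keeping the sign $\epsilon=\pm1$ consistent throughout. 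As an alternative one could instead specialize Theorem~2.4 of \cite{ChernTenenblat} to $k=2$ and solve its differential constraints, exactly as was done for Theorem~2.2 in the proof of Lemma~\ref{LemHLneq0}, but the direct route above is self-contained.
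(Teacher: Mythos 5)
Your proof is correct, but it takes a genuinely different route from the paper's. The paper disposes of this lemma in one line, declaring it ``immediate'' from Theorem 2.4 of Chern--Tenenblat specialized to $k=2$ (exactly the alternative you mention at the end), just as Lemma~\ref{LemHLneq0} was obtained by specializing and solving the constraints of their Theorem 2.2. You instead work directly from the structure equations (\ref{struct}): your key observation is that the combination ``third equation minus $\epsilon$ times the first'' kills the $F$-terms because $f_{31,z_0}=\epsilon f_{11,z_0}$, leaving the closed relation $D_xg=\epsilon\eta g$ for $g=f_{32}-\epsilon f_{12}$, while the second structure equation forces $g=(f_{22,z_0}/f_{11})z_1$; comparing coefficients of the independent jet coordinate $z_2$ then yields $f_{22,z_0}=0$ and $g=0$, and the first structure equation delivers (\ref{EET2}) with the correct sign. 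I checked the computation (including the cancellation $\epsilon^2=1$ on the right-hand side and the converse, where the middle equation is $0=0$ and the third is $\epsilon$ times the first), and it is sound. What your approach buys is a self-contained verification requiring no external classification machinery, and it makes transparent exactly why $f_{22}$ must be constant in this degenerate case $L=0$; what the paper's approach buys is brevity and uniformity with the treatment of the $HL\neq 0$ case, since the Chern--Tenenblat theorems already package the general-$k$ analysis. One minor point worth making explicit in your write-up: the identification of coefficients of $z_2$ (and the treatment of the reduced structure equations as identities in the jet variables) relies on the standard fact, used throughout this literature, that the equations must hold on arbitrary jets of solutions; and in the converse the nonvanishing $\lambda f_{11}-\eta f_{12}\neq 0$ is a genericity condition on the solution, not an identity, consistent with the definition of describing pseudo-spherical surfaces.
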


\begin{proof}Immediate when $k=2$ in Theorem 2.4 in \cite{ChernTenenblat}.\end{proof}

\section{Proof of Theorem \ref{EvolRes}}\label{ProofEvol}

\begin{Lem}\label{LemCoeff}Let $u_t=F(u,u_x,u_{xx})$ be a second-order evolution equation describing $\eta$ pseudo-spherical surfaces as in Lemma \ref{LemHLneq0} or in Lemma \ref{LemHL=0}. If  there exists a local isometric immersion of a surface determined by a solution $u$ for which  the  coefficients of the second fundamental form (\ref{w13_w23}) depend on a jet of finite order of $u$, i.e., $a, b$ and $c$ depend on $x, t, u, \dots, \partial^\ell u/\partial x^\ell$, where $\ell$ is finite, then $a, b$ and $c$ are universal, i.e., $a, b$ and $c$ depend only on $x$ and $t$.  \end{Lem}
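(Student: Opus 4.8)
The plan is to substitute the explicit normal forms for the coefficients $f_{ij}$ provided by Lemma \ref{LemHLneq0} and Lemma \ref{LemHL=0} into the two differential constraints (\ref{Eq1}) and (\ref{Eq2}), and then to exploit the polynomial structure of these constraints in the highest-order jet variables. Concretely, suppose $a,b,c$ depend on $x,t,z_0,\dots,z_\ell$ where $z_\ell$ is the genuinely highest-order derivative appearing ($\ell$ finite). The total derivative operators $D_x$ and $D_t$ raise the jet order: $D_x$ introduces $z_{\ell+1}$, while $D_t$ introduces $\partial_t z_\ell$, which by differentiating the evolution equation $z_{0,t}=F(z_0,z_1,z_2)$ repeatedly in $x$ is itself a function of $z_0,\dots,z_{\ell+2}$. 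Since the $f_{ij}$ themselves depend only on $z_0,z_1$ (recall from (\ref{NC1}) that $f_{ij,z_2}=0$ and $f_{11,z_1}=f_{31,z_1}=f_{22,z_1}=0$, and the explicit formulas only introduce $z_1$ through the $f_{22,z_0}z_1$ terms), equations (\ref{Eq1}) and (\ref{Eq2}) become polynomial identities in the top variables $z_{\ell+1}$ and $z_{\ell+2}$ with coefficients that are functions of the lower jet. The strategy is to read off the vanishing of the top-degree coefficients to force $a,b,c$ to be independent of $z_\ell$, and then to descend by induction on $\ell$ until one reaches $\ell=0$, and finally eliminate the dependence on $z_0=u$ itself.

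First I would make precise how $D_t$ acts on a function of $x,t,z_0,\dots,z_\ell$. Writing $F$ for the right-hand side of the evolution equation and $F^{(j)}=D_x^j F$, one has $D_t z_j = F^{(j)}$, and $F^{(\ell)}$ depends on $z_{\ell+2}$ (because $F$ depends on $z_2$) with a leading coefficient $F_{z_2}=f_{12,z_1}/f_{11,z_0}$ in the case of Lemma \ref{LemHL=0}, and analogously $\mp f_{22,z_0}/(\eta\sqrt{1-\alpha^2}f_{11,z_0})$ in the case of Lemma \ref{LemHLneq0}. Thus $D_t a$ contributes a term $a_{z_\ell} F^{(\ell)}$ carrying $z_{\ell+2}$, while $D_x a$ contributes $a_{z_\ell} z_{\ell+1}$ carrying only $z_{\ell+1}$. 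The highest jet variable appearing in (\ref{Eq1}) and (\ref{Eq2}) is therefore $z_{\ell+2}$, entering linearly and only through the $f_{11}D_t a$, $f_{21}D_t b$, $f_{11}D_t b$, $f_{21}D_t c$ terms. Collecting the coefficient of $z_{\ell+2}$ in each equation gives a linear system in $a_{z_\ell},b_{z_\ell},c_{z_\ell}$ whose matrix has entries built from $f_{11},f_{21}=\eta$ and the nonzero leading coefficient of $F^{(\ell)}$; since this coefficient is nonzero (by the standing assumptions $f_{11,z_0}\neq0$ and either $f_{12,z_1}\neq0$ or $f_{22,z_0}\neq0$), one concludes $a_{z_\ell}=b_{z_\ell}=c_{z_\ell}=0$, i.e. the coefficients do not in fact depend on $z_\ell$. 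Repeating the argument lowers $\ell$ by one, and the induction terminates once $a,b,c$ depend on no $z_j$ with $j\geq 3$; the remaining, more delicate steps handle the residual dependence on $z_0,z_1,z_2$ and ultimately on $u$.

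The main obstacle is the \emph{base case} of the descent, once $\ell$ has been reduced so that $a,b,c$ depend only on $x,t,z_0,z_1$ (or $z_0,z_1,z_2$): here the highest jet variable no longer enters through a single clean linear leading term, the Gauss constraint (\ref{Gauss}) couples $a,b,c$ nonlinearly, and one must combine the differentiated forms of (\ref{Eq1})--(\ref{Eq2}) with $ac-b^2=-1$ to kill the dependence on $z_1$ and then on $z_0$. Differentiating the Gauss equation with respect to the surviving jet variable yields $a_{z_j}c+ac_{z_j}-2b\,b_{z_j}=0$, which, together with the vanishing of the appropriate jet-coefficients in (\ref{Eq1})--(\ref{Eq2}), should over-determine the system and force all partial derivatives $a_{z_j},b_{z_j},c_{z_j}$ to vanish. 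The delicate point is that the two normal-form cases of Lemma \ref{LemHLneq0} and Lemma \ref{LemHL=0} must be treated separately, since the coefficient functions and the structure of the leading terms differ, and in each case one must verify that no degenerate alignment of the coefficient matrix occurs that would allow a nontrivial jet-dependence to survive. I expect the evolution-equation case to be comparatively clean precisely because $D_t$ strictly raises the jet order and thereby supplies the extra highest-order variable that the descent exploits; the genuine work is organizing the bookkeeping of which $f_{ij}$ terms carry which jet variables and confirming the non-degeneracy of the relevant linear systems at each stage.
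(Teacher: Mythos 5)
There is a genuine gap, and it sits at the very first step of your descent. Differentiating (\ref{Eq1}) and (\ref{Eq2}) with respect to the top variable $z_{\ell+2}$ (which enters only through $z_{\ell,t}=D_x^{\ell}F$, with coefficient $F_{z_2}\neq 0$) does \emph{not} give a linear system forcing $a_{z_\ell}=b_{z_\ell}=c_{z_\ell}=0$. It gives exactly two relations,
\begin{equation*}
f_{11}a_{z_\ell}+\eta\, b_{z_\ell}=0,\qquad f_{11}b_{z_\ell}+\eta\, c_{z_\ell}=0,
\end{equation*}
a rank-two system in three unknowns whose kernel is spanned by $(\eta^2,-\eta f_{11},f_{11}^2)$: the most you can conclude is $b_{z_\ell}=-(f_{11}/\eta)a_{z_\ell}$ and $c_{z_\ell}=(f_{11}/\eta)^2 a_{z_\ell}$, with $a_{z_\ell}$ still free (this is precisely (\ref{bzkczk}) in the paper). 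To go further one must invoke the Gauss equation: differentiating (\ref{Gauss}) in $z_\ell$ and substituting these relations yields the product
\begin{equation*}
\Bigl[\,c+\bigl(f_{11}/\eta\bigr)^2 a+2\bigl(f_{11}/\eta\bigr)b\,\Bigr]a_{z_\ell}=0,
\end{equation*}
so $a_{z_\ell}=0$ is only available where the bracket is nonzero. Your proposal does mention the Gauss constraint, but only in what you call the base case, and it never confronts the degenerate branch where the bracket vanishes identically on an open set. That branch is the actual content of the paper's proof: there $b=\pm1-(f_{11}/\eta)a$ and $c=(f_{11}/\eta)^2a\mp 2f_{11}/\eta$, the derivative $a_{z_\ell}$ need not vanish, and the paper substitutes these expressions back into (\ref{Eq1})--(\ref{Eq2}), descends when $\ell\geq 2$ (differentiation in $z_{\ell+1}$ gives $\Delta_{12}a_{z_\ell}=0$), and at $\ell\leq 1$ derives $f_{11,z_0}F_{z_2}=0$, contradicting that the equation is genuinely of second order. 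Without this case analysis the lemma is not proved: a nontrivial jet dependence along the kernel direction is exactly what your linear-algebra step fails to exclude.

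A secondary misjudgment: you locate the ``genuine work'' in the residual dependence on $z_0,z_1,z_2$. In fact, in the non-degenerate branch the descent is completely uniform all the way down to $z_0$ (there is no special difficulty at low jet order), while the delicate part is the degenerate branch above, which can occur at any $\ell$ and is eliminated by a contradiction rather than by descent. So the structure of a correct proof is a dichotomy driven by the Gauss equation, not an induction with a hard base case.
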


\begin{proof}Assume $a, b$ and $c$ depend on a jet of finite order, i.e., they depend on $x, t, z_{0}, \dots$ and $z_{\ell}$, where $\ell$ is fixed. Then (\ref{Eq1}) becomes
\begin{equation}\label{Eq1-k-1}
\begin{split}
&f_{11}a_{t} + \eta b_{t} - f_{12}a_{x} - f_{22}b_{x} - 2b(f_{11}f_{32} - f_{31}f_{12}) + (a-c)(\eta f_{32} - f_{22}f_{31}) \\ & - \sum_{i=0}^\ell (f_{12}a_{z_{i}} + f_{22}b_{z_{i}})z_{i+1} + \sum_{i=0}^\ell 
(f_{11}a_{z_{i}} + \eta b_{z_{i}})z_{i,t} = 0,\end{split}
\end{equation}
and (\ref{Eq2}) becomes
\begin{equation}\label{Eq2-k-1}
\begin{split}
&f_{11}b_{t} + \eta c_{t} - f_{12}b_{x} - f_{22}c_{x} +(a-c)(f_{11}f_{32} - f_{12}f_{31}) + 2b(\eta f_{32} - f_{22}f_{31})\\& - \sum_{i=0}^\ell (f_{12}b_{z_{i}} + f_{22}c_{z_{i}})z_{i+1} + \sum_{i=0}^\ell 
(f_{11}b_{z_{i}} + \eta c_{z_{i}})z_{i,t} = 0.\end{split}
\end{equation}
Since $f_{22,z_{0}} \neq 0$ and $f_{11,z_{0}} \neq 0$ for evolution equations (\ref{EET1}), and $f_{11,z_{0}} \neq 0$ and $f_{12,z_{1}} \neq 0$ for evolution equations (\ref{EET2}), differentiating (\ref{Eq1-k-1}) and (\ref{Eq2-k-1}) with respect to $z_{\ell+2}$ leads to  
$f_{11}a_{z_{\ell}} + \eta b_{z_{\ell}} = f_{11}b_{z_{\ell}} + \eta c_{z_{\ell}} = 0$, and hence
\begin{equation}\label{bzkczk}
b_{z_{\ell}} = -\dfrac{f_{11}}{\eta}a_{z_{\ell}}, \quad \text{ and } \quad c_{z_{\ell}} = \dfrac{f_{11}^2}{\eta^2}a_{z_{\ell}}.
\end{equation}

Differentiating  the Gauss equation (\ref{Gauss}) with respect to $z_\ell$ leads to  $ca_{z_{\ell}} + ac_{z_{\ell}} - 2bb_{z_{\ell}} = 0$, and substituting (\ref{bzkczk}) in the latter  leads  to 
\begin{equation}
\bigg[c+ \bigg(\dfrac{f_{11}}{\eta}\bigg)^2a + 2\dfrac{f_{11}}{\eta}b \bigg]a_{z_{\ell}}=0.
\end{equation}

If $c+ \bigg(\dfrac{f_{11}}{\eta}\bigg)^2a + 2\dfrac{f_{11}}{\eta}b = 0$ on an open set, then substituting the expression of $c$ in the Gauss equation $-ac+b^2 = 1$ leads to $ (f_{11}a/ \eta + b )^2 = 1$, so that 
\begin{equation*}
b = \pm 1 - \dfrac{f_{11}}{\eta}a, \quad \text{ and } \quad c = \bigg(\dfrac{f_{11}}{\eta}\bigg)^2 a \mp 2\dfrac{f_{11}}{\eta}.
\end{equation*}
We have then
\begin{eqnarray*}
&D_t b  = -\dfrac{f_{11}}{\eta}D_t a  - \dfrac{a}{\eta}f_{11,z_0} F, \quad &D_t c = \bigg(\dfrac{f_{11}}{\eta}\bigg)^2 D_t a + \dfrac{2}{\eta}\bigg( \dfrac{f_{11}}{\eta}a \mp 1\bigg) f_{11,z_0}F,\\
& D_x b  = -\dfrac{f_{11}}{\eta}D_x a  - \dfrac{a}{\eta}f_{11,z_0} z_1,, \quad 
 & D_x c = \bigg(\dfrac{f_{11}}{\eta}\bigg)^2 D_x a + \dfrac{2}{\eta}\bigg( \dfrac{f_{11}}{\eta}a \mp 1\bigg) f_{11,z_0}z_1,
\end{eqnarray*}
and hence
\begin{eqnarray}
f_{11}D_t a + \eta D_t b & = & -af_{11,z_0}F,\label{1em4}\\
f_{11}D_t b + \eta D_t c & = & \bigg(\dfrac{f_{11}}{\eta} a \mp 2\bigg)f_{11,z_0}F,\label{2em4}\\
f_{12}D_x a + f_{22}D_x b & = & -\dfrac{\Delta_{12}}{\eta}D_xa - \dfrac{af_{22}}{\eta}f_{11,z_0}z_1,\label{3em4}\\
f_{12}D_x b + f_{22}D_x c & = & \dfrac{f_{11}}{\eta}\dfrac{\Delta_{12}}{\eta}D_x a  + \dfrac{\Delta_{12}}{\eta^2} a f_{11,z_0}z_1 + \dfrac{f_{22}}{\eta}\bigg(\dfrac{f_{11}}{\eta} a \mp 2\bigg)f_{11,z_0}z_1.
\label{4em4}\end{eqnarray}
where $\Delta_{12}=f_{11}f_{22}-\eta f_{12}$. Substituting the latter four equalities in (\ref{Eq1}) lead to 
\begin{eqnarray*}
-af_{11,z_0}F  +\dfrac{\Delta_{12}}{\eta}D_xa + \dfrac{af_{22}}{\eta}f_{11,z_0}z_1- 2b (f_{11}f_{32} - f_{31}f_{12}) + (a-c)(\eta f_{32} - f_{31}f_{22}) = 0,
\end{eqnarray*}
which is equivalent to 
\begin{eqnarray}\label{eq1-1}
-af_{11,z_0}F  +\dfrac{\Delta_{12}}{\eta}\sum_{i=0}^\ell a_{z_i}z_{i+1} + \dfrac{af_{22}}{\eta}f_{11,z_0}z_1- 2b (f_{11}f_{32} - f_{31}f_{12}) + (a-c)(\eta f_{32} - f_{31}f_{22}) = 0.
\end{eqnarray} 
Substituting the four equalities (\ref{1em4})-(\ref{4em4}) into (\ref{Eq2}) lead to
\begin{equation*}
\begin{split}
&\bigg(\dfrac{f_{11}}{\eta} a \mp 2\bigg)f_{11,z_0}F -  \dfrac{f_{11}}{\eta}\dfrac{\Delta_{12}}{\eta}D_x a  -  \dfrac{\Delta_{12}}{\eta^2} a f_{11,z_0}z_1- \dfrac{f_{22}}{\eta}\bigg(\dfrac{f_{11}}{\eta} a \mp 2\bigg)f_{11,z_0}z_1\\ &  + (a-c)(f_{11}f_{32} - f_{31}f_{12}) + 2b(\eta f_{32} - f_{31}f_{22}) = 0, 
\end{split}
\end{equation*}
which is equivalent to 
\begin{equation}\label{eq2-1}
\begin{split}
&\bigg(\dfrac{f_{11}}{\eta} a \mp 2\bigg)f_{11,z_0}F -  \dfrac{f_{11}}{\eta}\dfrac{\Delta_{12}}{\eta}\sum_{i=0}^\ell a_{z_i}z_{i+1}  -  \dfrac{\Delta_{12}}{\eta^2} a f_{11,z_0}z_1- \dfrac{f_{22}}{\eta}\bigg(\dfrac{f_{11}}{\eta} a \mp 2\bigg)f_{11,z_0}z_1\\ &  + (a-c)(f_{11}f_{32} - f_{31}f_{12}) + 2b(\eta f_{32} - f_{31}f_{22}) = 0.
\end{split}
\end{equation}
\begin{itemize}
\item If $\ell \geq 2$, then differentiating (\ref{eq1-1}) with respect to $z_{\ell + 1}$ leads to $\Delta_{12}a_{z_\ell} = 0$. Thus  $a_{z_{\ell}} = 0$ and also $b_{z_\ell} = c_{z_\ell} = 0$. 
\item If $\ell = 1$, then differentiating (\ref{eq1-1})  and  (\ref{eq2-1}) with respect to $z_2$ lead to 
\begin{eqnarray*}
-af_{11,z_0}F_{z_2} + \dfrac{\Delta_{12}}{\eta} a_{z_{1}} = 0,\\
\bigg(\dfrac{f_{11}}{\eta} a \mp 2\bigg)f_{11,z_0}F_{z_2} - \dfrac{f_{11}}{\eta}\dfrac{\Delta_{12}}{\eta}a_{z_{1}} = 0.
\end{eqnarray*}
The latter system leads  to $f_{11,z_0}F_{z_{2}} = 0$, which runs into a contradiction. 
\item If $\ell =0$, then differentiating (\ref{eq1-1})  and  (\ref{eq2-1}) with respect to $z_2$ lead to 
\begin{eqnarray*}
-af_{11,z_0}F_{z_2}  = 0,\\
\bigg(\dfrac{f_{11}}{\eta} a \mp 2\bigg)f_{11,z_0}F_{z_2}  = 0.
\end{eqnarray*}
The latter system leads  to $f_{11,z_0}F_{z_{2}} = 0$, which runs into a contradiction. 
\end{itemize}
Therefore, for all $\ell$,  (\ref{Eq1}), (\ref{Eq2}) and the Gauss equation is an inconsistent system. 

\item If $c+ \bigg(\dfrac{f_{11}}{\eta}\bigg)^2a + 2\dfrac{f_{11}}{\eta}b \neq  0$, then $a_{z_\ell} = 0$, and hence $b_{z_\ell} = c_{z_\ell}=0$, and successive differentiating leads to $a_{z_i} = b_{z_i} = c_{z_i} = 0$ for all $i=0, \dots, \ell$. 

Finally, if the functions $a, b$ and $c$ depend on a jet of finite order, then there are universal, i.e., they are functions of $x$ and $t$ only. \end{proof}

\begin{Prop}For the second-order evolution equations which describe $\eta$ pseudo-spherical surfaces as in Lemma \ref{LemHLneq0}, there  is no local isometric immersion in $\mathbb{R}^3$ of a pseudo-spherical surface determined by a solution $u$, for which  the  coefficients $a,b,c$ of the second fundamental form depend on a jet of finite order of $u$.
\end{Prop}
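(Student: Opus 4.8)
The plan is to start from the reduction already established in Lemma~\ref{LemCoeff}: if a local isometric immersion with finite-order jet dependence existed for an equation as in Lemma~\ref{LemHLneq0}, then $a,b,c$ would in fact be \emph{universal}, i.e.\ functions of $x$ and $t$ alone. So I would assume $a=a(x,t)$, $b=b(x,t)$, $c=c(x,t)$ and seek a contradiction directly from (\ref{Eq1}), (\ref{Eq2}) and the Gauss equation (\ref{Gauss}).

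First I would substitute the explicit coefficients (\ref{f31-1})--(\ref{f32-1}) into the two $2\times2$ determinants occurring in (\ref{Eq1})--(\ref{Eq2}). A short computation, using $f_{31}-\alpha f_{11}=\pm\eta\sqrt{1-\alpha^2}$, collapses them to
\begin{equation*}
f_{11}f_{32}-f_{31}f_{12}=f_{22,z_0}z_1,\qquad \eta f_{32}-f_{22}f_{31}=\mp\frac{\alpha f_{22,z_0}}{\sqrt{1-\alpha^2}}\,z_1 .
\end{equation*}
Since $a,b,c$ and their $x,t$-derivatives are independent of $z_0,z_1$, while $f_{11}$ and $f_{22}$ depend on $z_0$ only and the $f_{i2}$ are affine in $z_1$, each of (\ref{Eq1}), (\ref{Eq2}) splits into its coefficient of $z_1$ and its $z_1$-free part, giving four identities. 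Dividing the $z_1$-coefficients by $f_{22,z_0}\neq0$ yields two purely algebraic relations
\begin{equation*}
\frac{a_x}{\eta\sqrt{1-\alpha^2}}-2b-\frac{\alpha(a-c)}{\sqrt{1-\alpha^2}}=0,\qquad \frac{b_x}{\eta\sqrt{1-\alpha^2}}+(a-c)-\frac{2\alpha b}{\sqrt{1-\alpha^2}}=0,
\end{equation*}
while the $z_1$-free parts give two relations of the form $f_{11}a_t+\eta b_t-\tfrac1\eta f_{11}f_{22}\,a_x-f_{22}\,b_x=0$ and the same with $(a,b)$ replaced by $(b,c)$.

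The heart of the argument is the analysis of these last two relations as identities in $z_0$. Since $f_{11,z_0}\neq0$, I would use $f_{11}$ as a local coordinate and write $f_{22}=g(f_{11})$ with $g'\neq0$ (because $f_{22,z_0}\neq0$); the relations then read $a_t\,p+\eta b_t=g(p)\bigl(\tfrac{a_x}{\eta}p+b_x\bigr)$ and its $(b,c)$-analogue, to hold identically in $p$. In the generic situation, where $g$ is not a Möbius function of $p$, the bracket $\tfrac{a_x}{\eta}p+b_x$ must then vanish identically — otherwise $g=(a_tp+\eta b_t)/(\tfrac{a_x}{\eta}p+b_x)$ would be Möbius — so $a_x=b_x=0$ and hence $a_t=b_t=0$; the second relation gives $b_x=c_x=b_t=c_t=0$ likewise, so $a,b,c$ are constants. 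The two algebraic relations above then reduce to $2b=-\alpha(a-c)/\sqrt{1-\alpha^2}$ and $a-c=2\alpha b/\sqrt{1-\alpha^2}$, which together force $b=0$ and $a=c$, whereupon the Gauss equation (\ref{Gauss}) becomes $a^2=-1$, a contradiction.

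The main obstacle is the degenerate subcases in which $g$ \emph{is} a Möbius (in particular an affine) function of $f_{11}$, for then the two $z_1$-free relations admit non-constant $a,b,c$ and the quick contradiction above is unavailable. Here I would extract the compatibility consequences of the relations — the proportionality conditions $a_tb_x=a_xb_t$, $a_tc_x=a_xc_t$, $b_tc_x=b_xc_t$ obtained by equating the two Möbius representations of $g$ and clearing denominators — and feed them, together with the two algebraic relations (which now pin down $a_x$ and $b_x$ in terms of $a,b,c$) and the differentiated Gauss equation, back into the system to show that it remains inconsistent. Carefully enumerating and closing off these special functional relations between $f_{11}$ and $f_{22}$ is the delicate part of the proof; everything else is bookkeeping on the split of (\ref{Eq1}) and (\ref{Eq2}) by powers of $z_1$ and $z_0$.
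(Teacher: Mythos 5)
Your opening moves coincide with the paper's: the reduction to universal $a,b,c$ via Lemma \ref{LemCoeff}, the two determinant identities, and the splitting of (\ref{Eq1})--(\ref{Eq2}) into $z_1$-coefficients and $z_1$-free parts are exactly the paper's equations (\ref{Eq1-1})--(\ref{Eq2-2}), and your generic case (where $f_{22}$ is not a M\"obius function of $f_{11}$) is handled correctly. The genuine gap is the degenerate case, which you explicitly defer as ``the delicate part'': this is not a peripheral subcase but precisely where the content of the proposition lies, since Lemma \ref{LemHLneq0} ties $f_{22}$ to $f_{11}$ by no relation at all beyond $f_{11,z_0}\neq 0$, $f_{22,z_0}\neq 0$, so that e.g.\ $f_{22}=\mu f_{11}+\nu$ with $\mu\neq 0$ is a realizable equation of the class, and for it your quick argument says nothing. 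Moreover, the plan you sketch for this case is underpowered as stated. The three pairwise conditions $a_tb_x=a_xb_t$, $a_tc_x=a_xc_t$, $b_tc_x=b_xc_t$ presuppose that both brackets $\tfrac{a_x}{\eta}p+b_x$ and $\tfrac{b_x}{\eta}p+c_x$ are not identically zero (otherwise ``the M\"obius representation of $g$'' does not exist), and these vanishing subcases are never treated; and the named ingredients (pairwise proportionality, the two algebraic relations, differentiated Gauss) do not by themselves form an inconsistent system --- for instance any $t$-independent triple $a(x),b(x),c(x)$ with $c=(b^2-1)/a$ solving the two algebraic relations as ODEs satisfies all of them. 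A complete proof of this case needs the full strength of the $z_1$-free relations as identities in $p=f_{11}$: clearing denominators in $g(p)=(\mu p+\nu)/(\rho p+\sigma)$ and matching \emph{all} coefficients of $p^2,p,1$ in both relations forces $a_x=b_x=c_x=a_t=b_t=c_t=0$ in every subcase, after which the algebraic relations give $b=0$, $a=c$, and (\ref{Gauss}) gives $a^2=-1$. That computation is exactly what the proposal omits.

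For comparison, the paper's proof avoids your case split altogether and is shorter. It differentiates the $z_1$-free relations (\ref{Eq1-2})--(\ref{Eq2-2}) twice with respect to $z_0$; since $(f_{11}f_{22})_{z_0}/f_{11,z_0}$ and $f_{22,z_0}/f_{11,z_0}$ cannot both be constant (that would force $f_{22,z_0}=0$), the resulting homogeneous linear system yields the determinant identity $a_xc_x-b_x^2=0$, equation (\ref{det}), uniformly in all cases --- M\"obius or not. Combining (\ref{det}) with (\ref{Eq1-3})--(\ref{Eq2-3}) gives $a_xb_t-a_tb_x=0$, equation (\ref{axbt}). Finally, since $\eta a_t-f_{22}a_x\neq 0$ (otherwise $a_x=a_t=0$, and then (\ref{det}) gives $b_x=0$, contradicting the nondegenerate $2\times 2$ relation coming from the $z_1$-coefficients together with (\ref{Gauss})), one solves (\ref{Eq1-2}) for $f_{11}$ as in (\ref{EqLast}); the $z_0$-derivative of that expression is proportional to $f_{22,z_0}(a_xb_t-a_tb_x)=0$, so $f_{11,z_0}=0$, the desired contradiction. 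You should either carry out the coefficient matching described above to close your degenerate case, or replace the case analysis by this uniform argument; as written, your proof establishes the proposition only for the non-M\"obius equations of the class.
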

\begin{proof}Let $a$, $b$, and $c$ be coefficients of the second fundamental form satisfying the Gauss equation $ac-b^2=-1$. By Lemma \ref{LemCoeff}, if $a, b$ and $c$ depend on a jet of finite order, then $a, b$ and $c$ depend only on $x$ and $t$. From (\ref{f31-1})-(\ref{f32-1}),  we have
\begin{eqnarray}\label{det1}
f_{11}f_{32} - f_{12}f_{31} &=& f_{22,z_{0}}z_{1},\\\label{det2}
\eta f_{32} - f_{22}f_{31} &=& \mp \dfrac{\alpha f_{22,z_{0}}}{\sqrt{1-\alpha^2}}z_{1}.
\end{eqnarray}
Taking into account the expressions (\ref{det1}), (\ref{det2}) and (\ref{f12-1}), equations  (\ref{Eq1}) and (\ref{Eq2}) become 
\begin{eqnarray}\label{Eq1-1}
f_{11}a_{t} + \eta b_{t} - \bigg( \dfrac{f_{11}f_{22}}{\eta} \mp \dfrac{f_{22,z_{0}}}{\eta\sqrt{1-\alpha^2}}z_{1}\bigg) a_{x} - f_{22}b_{x} - 2bf_{22,z_{0}}z_{1} \mp (a-c)\dfrac{\alpha f_{22,z_{0}}}{\sqrt{1-\alpha^2}}z_{1} =0,\\\label{Eq2-1}
f_{11}b_{t} + \eta c_{t} - \bigg( \dfrac{f_{11}f_{22}}{\eta} \mp \dfrac{f_{22,z_{0}}}{\eta\sqrt{1-\alpha^2}}z_{1}\bigg) b_{x} - f_{22}c_{x} +(a-c)f_{22,z_{0}}z_{1} \mp 2b\dfrac{\alpha f_{22,z_{0}}}{\sqrt{1-\alpha^2}}z_{1} =0.
\end{eqnarray}
Differentiating (\ref{Eq1-1}) and (\ref{Eq2-1}) with respect to $z_{1}$ and the fact that $f_{22,z_{0}} \neq 0$ lead to 
\begin{equation*}
\left(\begin{array}{c}a_{x} \\b_{x}\end{array}\right) = \left(\begin{array}{cc} \pm \eta \sqrt{1-\alpha^2} & \alpha\eta \\\alpha\eta &\mp \eta \sqrt{1-\alpha^2}  \end{array}\right)\left(\begin{array}{c}2b \\a-c\end{array}\right).
\end{equation*}
The determinant of the $2\times 2$ matrix appearing in the above equation is non-zero, therefore, $a_{x}$ and $b_{x}$ can not vanish simultaneously. Otherwise, $a-c = b = 0$, and this contradicts the Gauss equation. (\ref{Eq1-1}) and (\ref{Eq2-1}) become then
\begin{eqnarray}\label{Eq1-2}
\eta f_{11}a_{t} + \eta^2 b_{t} -  f_{11}f_{22}a_{x} - \eta f_{22}b_{x} =0,\\ \label{Eq2-2}
\eta f_{11}b_{t} + \eta^2 c_{t} - f_{11}f_{22}  b_{x} -\eta f_{22}c_{x} =0.
\end{eqnarray}

Differentiating (\ref{Eq1-2}) and (\ref{Eq2-2})  with respect to $z_{0}$, and dividing by $\eta f_{11,z_{0}}$ lead to 
\begin{eqnarray}\label{Eq1-3}
a_{t} = \dfrac{(f_{11}f_{22})_{z_{0}}}{\eta f_{11,z_{0}}} a_{x} +\dfrac{f_{22,z_{0}}}{f_{11,z_{0}}} b_{x},\\\label{Eq2-3}
b_{t} = \dfrac{(f_{11}f_{22})_{z_{0}}}{\eta f_{11,z_{0}}} b_{x} +\dfrac{f_{22,z_{0}}}{f_{11,z_{0}}} c_{x}.
\end{eqnarray}
Observe that $\dfrac{(f_{11}f_{22})_{z_{0}}}{f_{11,z_{0}}}$ and $\dfrac{f_{22,z_{0}}}{f_{11,z_{0}}}$ cannot both be constant. Otherwise, $f_{22,z_{0}}=0$ which is a contradiction. Differentiating  (\ref{Eq1-3}) and (\ref{Eq2-3}) with respect to $z_{0}$ leads to 
\begin{eqnarray*}
\bigg( \dfrac{(f_{11}f_{22})_{z_{0}}}{\eta f_{11,z_{0}}}\bigg)_{z_{0}} a_{x} +\bigg(\dfrac{f_{22,z_{0}}}{f_{11,z_{0}}}\bigg)_{z_{0}} b_{x} = 0,\\
\bigg(\dfrac{(f_{11}f_{22})_{z_{0}}}{\eta f_{11,z_{0}}}\bigg)_{z_{0}} b_{x} +\bigg(\dfrac{f_{22,z_{0}}}{f_{11,z_{0}}} \bigg)_{z_{0}}c_{x} = 0.
\end{eqnarray*}
 We conclude that 
\begin{equation}\label{det}
a_{x}c_{x} - b_{x}^2 =0.
\end{equation}

Subtracting (\ref{Eq2-3}) multiplied by $a_x$ from  (\ref{Eq1-3})  multiplied by $b_x$, it follows from (\ref{det}) that  
\begin{equation}\label{axbt}
a_{x}b_{t} - a_{t}b_{x} = 0. 
\end{equation}
From (\ref{Eq1-2}), we have $f_{11}(\eta a_{t} - f_{22}a_{x}) + \eta^2 b_{t} - \eta f_{22}b_{x} =0$. Note that $(\eta a_{t} - f_{22}a_{x}) \neq 0$. Otherwise, since $f_{22,z_0}\neq 0$, we have  $a_{x} = a_{t}= 0$ and hence it follows from (\ref{det}) that $b_x=0$, which runs into a contradiction.  Therefore, 
\begin{equation}\label{EqLast}
f_{11} = \dfrac{\eta (\eta b_{t} - f_{22}b_{x})}{\eta a_{t} - f_{22}a_{x}}.
\end{equation}
Differentiating (\ref{EqLast}) with respect to $z_{0}$ and taking into account (\ref{axbt}) lead to $f_{11,z_{0}} = 0$, which is a contradiction. \end{proof}%

\begin{Prop}\label{propabcexpluniversal} Let $u_t=F(u,u_x,u_{xx})$ be a second-order evolution equation which describes \linebreak $\eta$ pseudo-spherical surfaces, as in Lemma \ref{LemHL=0}.  
There exists a local isometric immersion in $\mathbb{R}^3$  of a pseudo-spherical surface, determined by a 
solution $u$, for which the coefficients of the second fundamental form (\ref{w13_w23}) depend on a jet of finite order of $u$ if, and only if, the coefficients are  universal and are given by 
\begin{eqnarray}\label{auniversal}
a&=& \sqrt{le^{\pm2(\eta x + \lambda t )} - \gamma^2e^{\pm4(\eta x + \lambda t )} - 1}, \\\label{buniversal}
b&=& \gamma e^{\pm2(\eta x + \lambda t)}, \\\label{cuniversal}
c&=& \dfrac{\gamma^2e^{\pm4(\eta x + \lambda t)} - 1}{ \sqrt{le^{\pm2(\eta x + \lambda t )} - \gamma^2e^{\pm 4(\eta x + \lambda t )} - 1}}, 
\end{eqnarray}
$l,\gamma \in \mathbb{R}, \, l>0$   and   $l^2 >4\gamma^2$.  The 1-forms are defined on a strip of $\mathbb{R}$ where 
\begin{equation}\label{strip}
\log\sqrt{\dfrac{l-\sqrt{l^2-4\gamma^2}}{2\gamma^2}}< \pm( \eta x+\lambda t)< \log\sqrt{\dfrac{l+\sqrt{l^2-4\gamma^2}}{2\gamma^2}}.   
 \end{equation}
 Moreover, the constants $l$ and $\gamma$ have to be chosen so that the strip intersects the domain of the solution of the evolution equation.   
\end{Prop}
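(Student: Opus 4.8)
The plan is to establish both implications of the equivalence. For the forward direction, I first invoke Lemma~\ref{LemCoeff}: if $a,b,c$ depend on a jet of finite order they must depend only on $x$ and $t$, so that $D_x$ and $D_t$ reduce to $\partial_x$ and $\partial_t$ on them. I then substitute the coefficients of Lemma~\ref{LemHL=0}, namely $f_{21}=\eta$, $f_{22}=\lambda$, $f_{31}=\pm f_{11}$ and $f_{32}=\pm f_{12}$, into the immersion equations (\ref{Eq1}) and (\ref{Eq2}). The decisive algebraic simplification is that the determinant $f_{11}f_{32}-f_{31}f_{12}$ vanishes identically, while $\eta f_{32}-f_{31}f_{22}=\pm(\eta f_{12}-\lambda f_{11})$; this collapses (\ref{Eq1}) and (\ref{Eq2}) into two relations that are affine in $f_{11}$ and $f_{12}$.

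Next I exploit the fact that $a,b,c$ are functions of $x,t$ alone, whereas $f_{11}=f_{11}(z_0)$ and $f_{12}=f_{12}(z_0,z_1)$, treating the two relations as identities in the independent jet coordinates $z_0,z_1$. Differentiating with respect to $z_1$ and using $f_{12,z_1}\neq 0$ yields $a_x=\pm\eta(a-c)$ and $b_x=\pm 2\eta b$; differentiating the surviving terms with respect to $z_0$ and using $f_{11,z_0}\neq 0$ yields $a_t=\pm\lambda(a-c)$, $b_t=\pm 2\lambda b$ and $\eta c_t-\lambda c_x=0$. Together with the Gauss equation (\ref{Gauss}) these five first-order relations form the system to be solved.

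The $b$-equations integrate at once to $b=\gamma e^{\pm 2(\eta x+\lambda t)}$, which is (\ref{buniversal}). The $a$-equations force $\lambda a_x=\eta a_t$, so $a$ is a function of the single characteristic variable $s:=\eta x+\lambda t$; the Gauss relation $ac=b^2-1=\gamma^2 e^{\pm 4s}-1$ then makes $c$ a function of $s$ as well, and one checks that $\eta c_t-\lambda c_x=0$ holds automatically. Eliminating $c$ from $a'(s)=\pm(a-c)$ and setting $w=a^2$, I obtain the linear first-order ODE $w'=\pm 2\,(w-\gamma^2 e^{\pm 4s}+1)$, whose general solution $w=l e^{\pm 2s}-\gamma^2 e^{\pm 4s}-1$ is exactly (\ref{auniversal}), while $c=(b^2-1)/a$ gives (\ref{cuniversal}). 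Reality and positivity of $a$ (needed for $c$ to be defined) require $-\gamma^2 y^2+l y-1>0$ with $y=e^{\pm 2s}>0$; demanding that this quadratic admit two positive roots forces $l>0$ and $l^2>4\gamma^2$, and confines $y$ to lie between the roots, which is precisely the strip (\ref{strip}).

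For the converse the verification is direct: the functions (\ref{auniversal})--(\ref{cuniversal}) depend only on $x$ and $t$ (trivially a jet of finite order) and satisfy $ac-b^2=-1$ by construction; since they are functions of $s$ with $b=\gamma e^{\pm 2s}$, running the previous computation in reverse shows that they solve (\ref{Eq1}) and (\ref{Eq2}), so they furnish an admissible second fundamental form and hence a local isometric immersion on the strip (\ref{strip}). I expect the main obstacle to be the separation-of-variables bookkeeping in the forward direction: correctly isolating the coefficients of the independent functions $f_{11}$, $f_{12}$ and their $z_0$-, $z_1$-derivatives to extract the five first-order relations, and then recognizing that $a$ and $c$ both collapse to functions of the single variable $s$, which is the observation that linearizes the Gauss-constrained system into a solvable ODE.
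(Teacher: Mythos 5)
Your proposal is correct and takes essentially the same approach as the paper: reduction to universal coefficients via Lemma \ref{LemCoeff}, substitution of the Lemma \ref{LemHL=0} data into (\ref{Eq1})--(\ref{Eq2}), differentiation with respect to $z_1$ and then $z_0$ to extract $a_x=\pm\eta(a-c)$, $b_x=\pm2\eta b$, $a_t=\pm\lambda(a-c)$, $b_t=\pm2\lambda b$, and then integration of this system together with the Gauss equation (your characteristic-variable/$w=a^2$ reduction is just a more explicit way of solving the same system the paper writes down). The one step the paper makes explicit that you elide is that $a\neq 0$ on any open set (if $a=0$, then $a_x=\pm\eta(a-c)$ forces $c=0$, Gauss then gives $b=\pm1$, contradicting $b_x=\pm2\eta b$ with $\eta\neq0$), which is what licenses writing $c=(b^2-1)/a$ before solving for $a$.
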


\begin{proof}As for the previous proposition, if $a, b$ and $c$ depend on a jet of finite order, it follows from  Lemma 4 that $a, b$ and $c$ depend only on $x$ and $t$.  We assume also that $f_{12,z_{1}} \neq 0$, otherwise, the evolution equation is not of second-order.  Equations (\ref{Eq1}) and (\ref{Eq2}) become 
 \begin{eqnarray}\label{Eq1k=02}
f_{11}a_{t} + \eta b_{t} - f_{12}a_{x}  -\lambda b_{x}  \pm (\eta f_{12} - \lambda f_{11})(a-c) = 0,\\ \label{Eq2k=02}
f_{11}b_{t} + \eta c_{t} - f_{12}b_{x}  -\lambda c_{x}  \pm (\eta f_{12} - \lambda f_{11})2b = 0.
\end{eqnarray}
Differentiating (\ref{Eq1k=02}) and (\ref{Eq2k=02}) with respect to $z_{1} $, and the fact that $f_{12,z_{1}} \neq 0$ lead to 
\begin{eqnarray}\label{Cax=0}
a_{x} \mp \eta (a-c) = 0,\\\label{Cbx=0}
b_{x} \mp 2\eta b = 0.
\end{eqnarray}
Taking into account (\ref{Cax=0}) and (\ref{Cbx=0}) and differentiating (\ref{Eq1k=02}) and (\ref{Eq2k=02}) with respect to $z_{0}$ leads to
\begin{eqnarray}\label{Cat=0}
a_{t} \mp \lambda (a-c) = 0,\\\label{Cbt=0}
b_{t} \mp 2\lambda b = 0,
\end{eqnarray}
and hence, (\ref{Eq1k=02}) and (\ref{Eq2k=02}) become
\begin{eqnarray}\label{btbx=0}
\eta b_{t} - \lambda b_{x} = 0,\\\label{ctcx=0}
\eta c_{t} - \lambda c_{x} = 0.
\end{eqnarray}
Note that (\ref{Cbx=0}) and (\ref{Cbt=0}) imply (\ref{btbx=0}), and (\ref{Cax=0}) and (\ref{Cat=0}) imply 
\begin{eqnarray}\label{atbx=0}
\eta a_{t} - \lambda a_{x} = 0,
\end{eqnarray}
and hence imply  (\ref{ctcx=0}). From (\ref{Cbx=0}) and (\ref{Cbt=0}), we conclude that 
\begin{equation}\label{b}
b= \gamma e^{\pm2(\eta x + \lambda t)}, \quad \gamma \in \mathbb{R}.
\end{equation}

Note that $a\neq 0$. Otherwise, if $a=0$, then (\ref{Cax=0}) implies that $c=0$ and the Gauss equation leads to $b=\pm1$ which contradicts (\ref{Cbx=0}). Therefore, from the Gauss equation  we have $c={(b^2-1)}{a^{-1}}$. Then, in view of (\ref{b}),  
equations (\ref{Cax=0}) and (\ref{Cat=0}) reduce to 
\begin{eqnarray*}
aa_x\mp \eta (a^2 - \gamma^2e^{\pm 4(\eta x + \lambda t)} + 1) = 0,\\
aa_t \mp\lambda( a^2  -  \gamma^2e^{\pm4(\eta x + \lambda t)} + 1) = 0.\\
\end{eqnarray*}
The latter system leads then to 
\begin{equation*}
a= \sqrt{le^{\pm2(\eta x + \lambda t )} - \gamma^2e^{\pm4(\eta x + \lambda t )} - 1}, \quad l\in \mathbb{R},
\end{equation*}
which is defined wherever $le^{\pm2(\eta x + \lambda t )} - \gamma^2e^{\pm4(\eta x + \lambda t )} - 1>0$. 
Hence $l>0$ and 
\[ 
\dfrac{l-\sqrt{l^2-4\gamma^2}}{2\gamma^2} < e^{\pm2(\eta x + \lambda t )}< \dfrac{l+\sqrt{l^2-4\gamma^2}}{2\gamma^2},
\]
{\it i.e.}, $a$ is defined on the strip described by (\ref{strip}).  
Now,  from either (\ref{Cax=0}) or (\ref{Cat=0}), we obtain 
\begin{equation*}
c= \dfrac{\gamma^2e^{\pm4(\eta x + \lambda t)} - 1}{ \sqrt{le^{\pm2(\eta x + \lambda t )} - \gamma^2e^{\pm4(\eta x + \lambda t )} - 1}}. 
\end{equation*}
 A straightforward computation shows that the converse holds. 
Finally, we observe that given a solution of the evolution equation, in order to have an immersion, one has to choose the constants $l$ and $\gamma$, such that the strip (\ref{strip}) intersects the domain
of the solution in $\mathbb{R}^2$. \end{proof}

\section{Proof of Theorem \ref{HyperbRes}}\label{HyperbProof}
We begin by introducing some notations. Given a differentiable function $u(x,t)$, we denote its partial derivatives by 
\begin{equation}\label{ziwi}
z_{i} = \dfrac{\partial^{i} u}{\partial x^{i}}, \quad w_{i} = \dfrac{\partial^{i} u}{\partial t^{i}}, 
\quad \mbox{ where } \quad  z_{0}=w_{0}=u.
\end{equation}
 We have therefore
\begin{equation*}
z_{i,x} = z_{i+1}, \quad 
z_{i,t} = \dfrac{\partial^{i-1}u_{xt}}{\partial x^{i-1}}, \quad w_{i,x} = \dfrac{\partial^{i-1} u_{xt}}{\partial t^{i-1}}, \quad w_{i,t} = w_{i+1},
\end{equation*}
and the total derivatives of a differentiable function $\varphi =\varphi(x,t,z_{0}, z_{1},w_{1}, \dots,  z_{\ell}, w_{\ell})$ are given by
\begin{eqnarray} \label{TotDerx}
D_{x}\varphi &=& \varphi_{x} +\sum_{i=0}^\ell\varphi_{z_{i}}z_{i+1}+\sum_{i=1}^\ell\varphi_{w_{i}}w_{i,x},\\\label{TotDert}
D_{t}\varphi &=& \varphi_{t} +\sum_{i=1}^\ell\varphi_{z_{i}}z_{i,t}+\sum_{i=0}^\ell\varphi_{w_{i}}w_{i+1}.
\end{eqnarray}

We also introduce the notation
\begin{equation}\label{deltaij}
\Delta_{ij} = f_{i1}f_{j2} - f_{j1}f_{i2}.
\end{equation}
Observe that 
\begin{equation}\label{deltaijneq}
\Delta_{12}\neq 0,\qquad \Delta_{13}^2+\Delta_{23}^2\neq 0.
\end{equation}
In fact, $\Delta_{12}\neq 0$ is equivalent to $\omega^{1}\wedge \omega^{2}\neq 0$. Moreover, 
$\omega^{1}\wedge \omega^{3}=\Delta _{13} dx\wedge dt $ and   $\omega^{2}\wedge \omega^{3}=\Delta_{23}dx\wedge dt$. If $\Delta_{13}=\Delta_{23}=0$, then it follows from (\ref{struct}) that $d\omega^1=d\omega^2=0$. Therefore, 
$\omega^3(e_1)=\omega^3(e_2)=0$ and hence $\omega^3=0$ that is in contradiction with 
$d\omega^3=\omega^1\wedge\omega^2$. 

The classification theorem of Rabelo and Tenenblat (see Theorem \ref{RTClassification}) for hyperbolic equations describing $\eta$ pseudo-spherical surfaces makes use of a number of lemmas. Its proof also  provides 
the coefficients $f_{ij}$ of the 1-forms (\ref{forms}) for each equation of Theorem 2. 
 We will need the lemmas and these coefficients for the proof of Theorem \ref{HyperbRes}. We therefore recall them from \cite{RabeloTenenblat90}  without proof. However, the reader can easily check, in each case stated in Lemmas \ref{LemF}-\ref{LemLin}, that the structure 
 equations (\ref{struct}) hold if, and only if, the corresponding hyperbolic equation holds. 

\begin{Lem} \cite{RabeloTenenblat90}
Let $u_{xt} = F(u,u_x)$ be a differential equation describing $\eta$ pseudo-spherical surfaces, with associated one-forms $\omega_{i} = f_{i1}dx + f_{i2}dt $, where $f_{ij}$ and $F$ are real differentiable $(\mathcal{C}^\infty$) functions on a open connected set $U\subset \mathbb{R}^2$. Then 
\begin{eqnarray*}
f_{11,u} \equiv f_{31,u} \equiv 0,&\\
f_{12,u_x} \equiv f_{22, u_x} \equiv f_{32,u_x}\equiv 0,&\\
f_{11,u_x}^2 + f_{31, u_x}^2 \neq 0,& \quad \text{ in } U.
\end{eqnarray*}
\end{Lem}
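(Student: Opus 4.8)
The plan is to obtain all three conclusions directly from the structure equations (\ref{struct}) by expanding each $d\omega^{i}$ on a solution and comparing the coefficients of independent jet variables. Since the $f_{ij}$ are functions of $u$ and $u_{x}$ only and on a solution $u_{xt}=F(u,u_{x})$, the total-derivative formulas (\ref{TotDerx})--(\ref{TotDert}) give
\[
d\omega^{i}=(D_{x}f_{i2}-D_{t}f_{i1})\,dx\wedge dt=\bigl(f_{i2,u}u_{x}+f_{i2,u_{x}}u_{xx}-f_{i1,u}u_{t}-f_{i1,u_{x}}F\bigr)\,dx\wedge dt .
\]
The key observation is that $u_{xx}$ and $u_{t}$ are independent coordinates on the jet space of the hyperbolic equation (the equation determines only the mixed derivative $u_{xt}=F$), while each right-hand side $\omega^{j}\wedge\omega^{k}=\Delta_{jk}\,dx\wedge dt$ is a function of $u$ and $u_{x}$ alone. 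First I would write the three structure equations in this form, turning each into an identity in $u,u_{x},u_{xx},u_{t}$ whose two sides must match coefficient by coefficient.

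Comparing the coefficients of $u_{xx}$ in $d\omega^{1},d\omega^{2},d\omega^{3}$ against the purely $(u,u_{x})$-dependent right-hand sides forces $f_{12,u_{x}}=f_{22,u_{x}}=f_{32,u_{x}}=0$, and comparing the coefficients of $u_{t}$ forces $f_{11,u}=f_{31,u}=0$ (and, since $f_{21}=\eta$ is constant for $\eta$ pseudo-spherical surfaces, $f_{21,u}=0$ as well). This already establishes the first two lines of the lemma, and shows that $f_{11},f_{31}$ depend only on $u_{x}$ while $f_{12},f_{22},f_{32}$ depend only on $u$. What is left are the three ``base'' identities coming from the parts free of $u_{xx}$ and $u_{t}$; in particular the equation $d\omega^{3}=\omega^{1}\wedge\omega^{2}$ reduces to
\[
u_{x}f_{32,u}-F\,f_{31,u_{x}}=\Delta_{12}.
\]

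The hard part is the final non-degeneracy claim $f_{11,u_{x}}^{2}+f_{31,u_{x}}^{2}\neq0$, which I would argue by contradiction. Suppose $f_{11,u_{x}}=f_{31,u_{x}}=0$; combined with $f_{11,u}=f_{31,u}=0$ this makes $f_{11}$ and $f_{31}$ constants. Then in the displayed base identity the term $F\,f_{31,u_{x}}$ drops out, so its left-hand side is $u_{x}$ times a function of $u$, whereas its right-hand side $\Delta_{12}=f_{11}f_{22}-\eta f_{12}$ is a function of $u$ alone; matching the two as an identity in $u_{x}$ forces simultaneously $f_{32,u}=0$ and $\Delta_{12}=0$. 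But $\Delta_{12}=0$ contradicts $\omega^{1}\wedge\omega^{2}\neq0$ (equivalently $\Delta_{12}\neq0$) recorded in (\ref{deltaijneq}). Hence $f_{11}$ and $f_{31}$ cannot both be independent of $u_{x}$. I would emphasize that this last step genuinely relies on $f_{21}=\eta$ being a constant: if $f_{21}$ were allowed to depend on $u_{x}$, the right-hand side of the base identity would no longer be $u_{x}$-free and degenerate choices with $f_{11}\equiv f_{31}\equiv0$ would survive, so it is exactly the $\eta$ pseudo-spherical hypothesis that closes the argument.
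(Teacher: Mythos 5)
First, a caveat: the paper does not prove this lemma at all --- it is recalled from \cite{RabeloTenenblat90} ``without proof'' --- so your attempt can only be measured against what a correct proof must contain. Your treatment of the first two lines is correct and standard: expanding $d\omega^i=(D_xf_{i2}-D_tf_{i1})\,dx\wedge dt$ on solutions and using that $u,u_x,u_t,u_{xx}$ can be prescribed independently, the coefficients of $u_{xx}$ and $u_t$ give $f_{i2,u_x}=0$ and $f_{i1,u}=0$, and what remains are the three base identities.

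The gap is in the nondegeneracy claim. The statement is pointwise: $f_{11,u_x}^2+f_{31,u_x}^2\neq 0$ \emph{at every point} of $U$. Your contradiction hypothesis --- ``suppose $f_{11,u_x}=f_{31,u_x}=0$; this makes $f_{11}$ and $f_{31}$ constants'' --- is legitimate only if the vanishing holds identically (or on an open set); vanishing at a point does not make these functions constant. So your argument proves only that the two derivatives cannot both vanish on an open set. Worse, the gap cannot be closed by your method, because everything you use (the base identities, which encode only the ``if'' half of the definition, plus $\Delta_{12}\neq 0$) is consistent with a common isolated zero. Concretely, take $\eta=1$ and
\begin{equation*}
f_{11}=f_{31}=(u_x-1)^2,\qquad f_{21}=f_{22}=1,\qquad f_{12}=f_{32}=-e^{-u},\qquad F=\tfrac12 e^{-u}-\tfrac12(u_x-1).
\end{equation*}
These are smooth, satisfy $f_{11,u}=f_{31,u}=f_{12,u_x}=f_{22,u_x}=f_{32,u_x}=0$ and all three base identities (in particular $u_xf_{32,u}-Ff_{31,u_x}=\Delta_{12}$), and $\Delta_{12}=(u_x-1)^2+e^{-u}>0$ everywhere; yet $f_{11,u_x}=f_{31,u_x}=0$ along $u_x=1$. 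This data does not in fact describe pseudo-spherical surfaces, but nothing in your proof can detect that.

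What rules out such data --- and what the third conclusion actually rests on --- is the ``only if'' half of the definition, which you never invoke: the structure equations must \emph{force} $u_{xt}=F$. Once the first two lines and the base identities are in hand, each structure equation reduces to $f_{i1,u_x}\,(u_{xt}-F)=0$ for $i=1,3$ (the $i=2$ equation becomes an identity). If $f_{11,u_x}$ and $f_{31,u_x}$ had a common zero at some $(u_0,z_0)$, then any function with $u_x\equiv z_0$, e.g.\ $u=z_0x+g(t)$ with $g$ arbitrary, would satisfy all three structure equations --- in the example above one checks directly that it does, with $\omega^1\wedge\omega^2\neq 0$ --- while failing to solve the equation, since $u_{xt}=0\neq F$ there. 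This contradicts the required equivalence, and it is exactly this pointwise argument via the converse direction that should replace your open-set contradiction.
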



\begin{Lem}\label{LemF} \cite{RabeloTenenblat90} 
 The coefficients $f_{ij}$ of the 1-forms (\ref{forms}) for the equation 
\begin{equation}\label{eqF}
u_{xt}=F(u), \mbox{ \quad where \quad  }  F''(u)+\alpha F(u)=0, \qquad \alpha \in \mathbb{R}\setminus \{0\}
\end{equation}
are given by 
\begin{equation}\label{fijF''}
\left(\begin{array}{cc} f_{11} & f_{12}  \\ f_{21} & f_{22}  \\ f_{31} &f_{32}  \end{array}\right) = \left(\begin{array}{cc}    -\alpha (Bz_1 - AQ)&  A\alpha (QF' - \eta F)/(Q^2\alpha + \eta^2)  \\  \eta  & (\eta F' + \alpha QF)/(Q^2\alpha + \eta^2)  \\ -\alpha (Az_1 - BQ)&  B\alpha (QF' - \eta F)/(Q^2\alpha + \eta^2)   \end{array}\right), 
\end{equation}
where  $z_1=u_x$,  $A,B, Q\in \mathbb{R}$ are such that $\alpha=1/(A^2-B^2)$, $A^2-B^2\neq 0$ and $Q^2\alpha+\eta^2\neq 0$ and  
$\eta\in \mathbb{R}\setminus\{0\}$. In particular,  
if $B=0$ and hence $A\neq 0$, one has  $\alpha=1/A^2>0$ and     
\begin{equation}\label{fijF''QA}
\left(\begin{array}{cc} f_{11} & f_{12}  \\ f_{21} & f_{22}  \\ f_{31} &f_{32}  \end{array}\right) = \left(\begin{array}{cc}   \alpha AQ & \alpha A (QF'-\eta  F)/ (Q^2\alpha+\eta^2)  \\  \eta  &  (\eta F'+Q\alpha F)/(Q^2\alpha+\eta^2)  \\ -\alpha Az_1 &  0   \end{array}\right). 
\end{equation}
\end{Lem}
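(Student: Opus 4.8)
The plan is to verify directly that, with the coefficients $f_{ij}$ prescribed by (\ref{fijF''}), the structure equations (\ref{struct}) hold on solutions of (\ref{eqF}) and conversely force $u_{xt}=F(u)$. Since each $\omega^{i}=f_{i1}\,dx+f_{i2}\,dt$ satisfies $d\omega^{i}=(D_{x}f_{i2}-D_{t}f_{i1})\,dx\wedge dt$ on a solution, while $\omega^{j}\wedge\omega^{k}=\Delta_{jk}\,dx\wedge dt$, the system (\ref{struct}) is equivalent to the three scalar identities
\begin{equation*}
D_{x}f_{12}-D_{t}f_{11}=\Delta_{32},\qquad D_{x}f_{22}-D_{t}f_{21}=\Delta_{13},\qquad D_{x}f_{32}-D_{t}f_{31}=\Delta_{12}.
\end{equation*}
First I would record the total derivatives needed. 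Writing $z_{1}=u_{x}$ and treating $u_{xt}$ as an independent symbol (so as to obtain both directions of the equivalence at once), one has $D_{x}F(u)=F'(u)z_{1}$ and, crucially, $D_{x}F'(u)=F''(u)z_{1}=-\alpha F(u)z_{1}$ by the hypothesis $F''+\alpha F=0$. Abbreviating $K=Q^{2}\alpha+\eta^{2}$, the only coefficients carrying a $t$-derivative are $f_{11}$ and $f_{31}$, with $D_{t}f_{11}=-\alpha B\,u_{xt}$, $D_{t}f_{31}=-\alpha A\,u_{xt}$, and $D_{t}f_{21}=0$.

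Next I would evaluate the three identities in turn. The middle one is cleanest: $D_{x}f_{22}=(\alpha/K)(QF'-\eta F)z_{1}$, while expanding $\Delta_{13}=f_{11}f_{32}-f_{31}f_{12}$ the bracket collapses to $\alpha^{2}(A^{2}-B^{2})(QF'-\eta F)z_{1}$, which equals $(\alpha/K)(QF'-\eta F)z_{1}$ precisely because $\alpha(A^{2}-B^{2})=1$; so the second equation is an identity, independent of the PDE. For the first and third identities the same two reductions do all the work: inside $\Delta_{32}$ and $\Delta_{12}$ the combination $\alpha Q^{2}+\eta^{2}$ appears and is replaced by $K$ to clear the denominator, after which the $F'$-terms cancel against $D_{x}f_{12}$, $D_{x}f_{32}$ and only the $F$-terms survive. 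The first equation then reduces to $\alpha B(u_{xt}-F)=0$ and the third to $\alpha A(u_{xt}-F)=0$. Since $\alpha=1/(A^{2}-B^{2})\neq 0$ forces $A$ and $B$ not to vanish simultaneously, at least one of these is a nonzero multiple of $u_{xt}-F$, so the structure equations hold if and only if $u_{xt}=F(u)$, as claimed. The specialization (\ref{fijF''QA}) is then simply the case $B=0$, $A\neq 0$, for which $\alpha=1/A^{2}>0$, $f_{31}=-\alpha A z_{1}$ and $f_{32}=0$.

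The computations are polynomial in $F,F',z_{1}$ with constant coefficients, so the genuine content is bookkeeping rather than any conceptual difficulty. The step I expect to demand the most care is the simultaneous handling of the two algebraic relations $\alpha(A^{2}-B^{2})=1$ and $K=\alpha Q^{2}+\eta^{2}$: every cancellation in the first and third identities hinges on recognizing the factor $\alpha Q^{2}+\eta^{2}$ in the expansion of $\Delta_{32}$ and $\Delta_{12}$ and clearing it against $K$, after which only the $F$-terms weighted by $\alpha B$ or $\alpha A$ remain. Keeping $u_{xt}-F$ explicit throughout, rather than substituting the equation prematurely, is what yields the equivalence in both directions and isolates the role of the constraint $A^{2}-B^{2}\neq 0$.
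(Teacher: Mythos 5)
Your proposal is correct and is essentially the approach the paper itself prescribes: the paper recalls this lemma from \cite{RabeloTenenblat90} without proof, noting only that the reader can easily check that the structure equations (\ref{struct}) hold if, and only if, the corresponding hyperbolic equation holds, and your computation is exactly that check, with the three identities reducing (correctly) to $\alpha B(u_{xt}-F)=0$, an identity, and $\alpha A(u_{xt}-F)=0$, whence the equivalence with $u_{xt}=F(u)$ since $A$ and $B$ cannot vanish simultaneously. The only caveat, which applies equally to the paper's treatment, is that this verifies the stated coefficients rather than deriving them, so the classification content of the lemma (that the $f_{ij}$ \emph{must} take this form) still rests on the citation to \cite{RabeloTenenblat90}.
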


\begin{Lem}\label{Lemexp} \cite{RabeloTenenblat90} The coefficients $f_{ij}$ of the 1-forms (\ref{forms}) for the equation 
\begin{equation}\label{eqexp}
u_{xt}= \nu e^{\delta u}\sqrt{\beta+\gamma u_x^2}, \quad \mbox{ where\quad} \delta,\gamma,\nu\in \mathbb{R}\setminus \{0\} \mbox{ and }\quad  \beta=0, \mbox{ when } \gamma=1, 
\end{equation}
are given as follows:\\ 
a) If $\gamma\neq 1$, then 
\begin{equation}\label{fijexp}
\left(\begin{array}{cc} 
f_{11} & f_{12}  \\ f_{21} & f_{22}  \\ f_{31} &f_{32}  
\end{array}\right) = 
\left(\begin{array}{cc}  
\eta A\delta - (Bz_1\mp A\sqrt{\Delta})\delta^2/(\gamma - 1)  &   \pm A\delta \nu e^{\delta z_0} \\  
\eta  &  \pm \nu e^{\delta z_0} \\ 
\eta B\delta - (Az_1\mp B\sqrt{\Delta})\delta^2/(\gamma - 1)  &   \pm B\delta \nu e^{\delta z_0} \end{array}\right), 
\end{equation}
where $z_0=u$, $z_1=u_x$,  $\Delta = \beta+ \gamma z_1^2 >0$, $A^2-B^2 = (\gamma - 1)/\delta^2$ and $\eta\in \mathbb{R}\setminus\{0\}$. \\
 b) If $\gamma=1$,  
\begin{equation}\label{fijexp1}
\left(\begin{array}{cc} f_{11} & f_{12}  \\ f_{21} & f_{22}  \\ f_{31} &f_{32}  \end{array}\right) = \left(\begin{array}{cc}  \frac12(\frac1A + \delta^2 A)z_1 + \eta \delta A  & \pm A\delta \nu e^{\delta z_0}   \\  \eta  & \pm\nu e^{\delta z_0}  \\ \frac12(-\frac1A + \delta^2 A)z_1 \pm \eta \delta A  &  A\delta \nu e^{\delta z_0}    \end{array}\right),
\end{equation}
where $A, \eta\in \mathbb{R}\setminus\{0\}$. 
\end{Lem}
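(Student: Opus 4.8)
The plan is to prove the lemma by direct verification of the structure equations (\ref{struct}), taking advantage of the special dependence of the coefficients $f_{ij}$ on the jet variables $z_{0}=u$ and $z_{1}=u_{x}$ that was established in the preceding lemma of Rabelo and Tenenblat. Indeed, there $f_{11}$ and $f_{31}$ are functions of $z_{1}$ alone, $f_{12},f_{22},f_{32}$ are functions of $z_{0}$ alone, and $f_{21}=\eta$ is constant. Writing $\omega^{i}=f_{i1}\,dx+f_{i2}\,dt$ and using $dz_{0}=z_{1}\,dx+u_{t}\,dt$ together with $dz_{1}=z_{2}\,dx+u_{xt}\,dt$, the terms in $z_{2}=u_{xx}$ cancel from $df_{i1}\wedge dx$, and I obtain
\[
d\omega^{i}=\bigl(f_{i2,z_{0}}\,z_{1}-f_{i1,z_{1}}\,u_{xt}\bigr)\,dx\wedge dt,\qquad 1\le i\le 3.
\]
On the other side of (\ref{struct}) I compute the three wedge products, whose $dx\wedge dt$ coefficients are $f_{31}f_{22}-\eta f_{32}$ for $\omega^{3}\wedge\omega^{2}$, $f_{11}f_{32}-f_{12}f_{31}$ for $\omega^{1}\wedge\omega^{3}$, and $f_{11}f_{22}-\eta f_{12}$ for $\omega^{1}\wedge\omega^{2}$.

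Equating coefficients reduces (\ref{struct}) to three scalar relations. The relation coming from $d\omega^{2}=\omega^{1}\wedge\omega^{3}$ does not involve $u_{xt}$, because $f_{21,z_{1}}=0$: it reads $f_{22,z_{0}}\,z_{1}=f_{11}f_{32}-f_{12}f_{31}$ and must hence hold as an identity in $(z_{0},z_{1})$. The relations from $d\omega^{1}=\omega^{3}\wedge\omega^{2}$ and $d\omega^{3}=\omega^{1}\wedge\omega^{2}$ are affine in $u_{xt}$ with coefficients $-f_{11,z_{1}}$ and $-f_{31,z_{1}}$ respectively; since $f_{11,z_{1}}^{2}+f_{31,z_{1}}^{2}\neq0$, at least one of these is nonvanishing, so either relation determines $u_{xt}$ uniquely. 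The verification therefore comes down to substituting the explicit coefficients (\ref{fijexp}) and checking two things: that the $u_{xt}$-free relation holds automatically, and that solving either of the remaining two relations for $u_{xt}$ yields exactly $u_{xt}=\nu e^{\delta z_{0}}\sqrt{\beta+\gamma z_{1}^{2}}$. Because the argument is an equivalence of scalar relations, this simultaneously gives both the ``if'' and the ``only if'' directions, on the region where $\omega^{1}\wedge\omega^{2}\neq0$.

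The step I expect to require the most care is the algebra involving the radical $\sqrt{\Delta}$, $\Delta=\beta+\gamma z_{1}^{2}$, occurring in $f_{11}$ and $f_{31}$: differentiation in $z_{1}$ produces factors $\gamma z_{1}/\sqrt{\Delta}$, so after inserting $u_{xt}=\nu e^{\delta z_{0}}\sqrt{\Delta}$ I must recombine the resulting $z_{1}$-linear and $\sqrt{\Delta}$-proportional contributions and show that they collapse to the required expression. The crucial input that makes every relation hold identically in $(z_{0},z_{1})$, rather than only at isolated values, is the parameter constraint $A^{2}-B^{2}=(\gamma-1)/\delta^{2}$; it is precisely this relation that reduces $f_{11}f_{32}-f_{12}f_{31}$ to $\pm\delta\nu e^{\delta z_{0}}z_{1}=f_{22,z_{0}}z_{1}$, establishing the $u_{xt}$-free identity. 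The case $\gamma=1$ must be treated separately using the parametrization (\ref{fijexp1}), since there the constraint becomes vacuous and $\Delta$ degenerates; the same three-step computation — extract the $dx\wedge dt$ coefficients, verify the $u_{xt}$-free identity, and solve for $u_{xt}$ — then recovers the equation (\ref{eqexp}) on the region where the forms are defined.
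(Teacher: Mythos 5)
Your proposal is correct and coincides with the paper's own treatment: the paper recalls this lemma from \cite{RabeloTenenblat90} without proof, remarking only that the reader can easily check that the structure equations (\ref{struct}) hold if, and only if, the hyperbolic equation holds, and your outline is exactly that verification. Your reduction to three scalar relations --- the $u_{xt}$-free identity $f_{22,z_0}z_1=f_{11}f_{32}-f_{12}f_{31}$, which holds precisely because $A^2-B^2=(\gamma-1)/\delta^2$, plus two relations affine in $u_{xt}$ with coefficients $-f_{11,z_1}$ and $-f_{31,z_1}$ not simultaneously zero --- is the right mechanism and checks out against the coefficients (\ref{fijexp}).
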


\begin{Lem}\label{LemLin} \cite{RabeloTenenblat90} The coefficients $f_{ij}$ of the 1-forms (\ref{forms}) for the equation 
\begin{equation}\label{eqLin}
u_{xt}= \lambda u+\xi u_x +\tau, \qquad \lambda,\xi,\tau \in \mathbb{R}
\end{equation} 
are given as follows:\\ 
a) If $\lambda=\xi=\tau=0$, then 
\begin{equation}\label{fijLin1}
\left(\begin{array}{cc}
f_{11} & f_{12} \\f_{21} & f_{22} \\f_{31} & f_{32}
\end{array}\right)=
\left(\begin{array}{cc} z_{1} &  0 \\ 
\eta & e^{z_0}  \\  
 \eta &  e^{z_0} \end{array}\right),
 \end{equation}
 where $z_0=u$, $z_1=u_x$ and $\eta\neq 0$.
\\
b) If $\lambda\neq 0$, then
\begin{equation}\label{fijLin2}
\left(\begin{array}{cc}
f_{11} & f_{12} \\f_{21} & f_{22} \\f_{31} & f_{32}
\end{array}\right)=
\left(\begin{array}{cc} \pm \eta Tz_{1}/\lambda &  Tz_{0} + \tau T/\lambda \\ 
\eta & \lambda/\eta \mp \xi \\  
 \eta Tz_{1}/\lambda &  \pm Tz_{0} \pm \tau T/\lambda \end{array}\right)
\end{equation}
where $T,\eta\in \mathbb{R}\setminus\{0\}$.\\ 
c) If $\lambda=0$ and $\xi^2+\tau^2\neq 0$,  then  
\begin{equation}\label{fijLin3}
\left(\begin{array}{cc}f_{11} & f_{12} \\f_{21} & f_{22} \\f_{31} & f_{32}\end{array}\right)=\left(\begin{array}{cc} \int dz_1/F(z_1)&  1/\eta\\ \eta & 0 \\   \int dz_1/F(z_1)&  1/\eta \end{array}\right)
\end{equation}
where $\eta\in \mathbb{R}\setminus\{0\}$. 
\end{Lem}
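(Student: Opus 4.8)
The plan is to read the statement as a classification result: assuming that $u_{xt}=\lambda u+\xi u_x+\tau$ describes $\eta$ pseudo-spherical surfaces, I solve the structure equations (\ref{struct}) for the coefficients $f_{ij}$ and show that they must reduce to exactly the three displayed families. First I invoke the preliminary lemma recalled above, which gives $f_{11,u}=f_{31,u}=0$ and $f_{12,u_x}=f_{22,u_x}=f_{32,u_x}=0$, together with $f_{21}=\eta$ and the standing convention that the $f_{ij}$ depend on $u$ and its $x$-derivatives but not explicitly on $x,t$. Hence the unknowns collapse to $f_{11}=f_{11}(z_1)$, $f_{31}=f_{31}(z_1)$ and $f_{12},f_{22},f_{32}$ functions of $z_0$ alone, where $z_0=u$, $z_1=u_x$. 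Writing $d\omega^i=(D_x f_{i2}-D_t f_{i1})\,dx\wedge dt$ on solutions and using $D_x z_0=z_1$, $D_t z_1=u_{xt}=F$ with $F=\lambda z_0+\xi z_1+\tau$, the structure equations (\ref{struct}) become the three functional identities
\begin{align*}
f_{12}'z_1-f_{11}'(\lambda z_0+\xi z_1+\tau) &= f_{31}f_{22}-\eta f_{32},\\
f_{22}'z_1 &= f_{11}f_{32}-f_{12}f_{31},\\
f_{32}'z_1-f_{31}'(\lambda z_0+\xi z_1+\tau) &= f_{11}f_{22}-\eta f_{12},
\end{align*}
which, since $z_0$ and $z_1$ are independent jet coordinates, must hold identically in $(z_0,z_1)$.

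Next I would extract ODEs by differentiating these identities and separating the $z_0$- and $z_1$-dependence. Differentiating the middle identity twice in $z_1$ yields the bilinear relation $f_{11}''f_{32}=f_{12}f_{31}''$, and differentiating the first and third twice in $z_1$ (using that $F$ is affine) produces analogous relations tying $f_{11}'',f_{31}''$ to $f_{12}',f_{22}',f_{32}'$. Because each such relation equates a function of $z_1$ times a function of $z_0$ to another product of the same type, it forces rigidity: either the $z_1$-factors $f_{11},f_{31}$ are affine (their second derivatives vanish), or the $z_0$-factors are pairwise proportional with constant ratios. The non-degeneracy $f_{11,z_1}^2+f_{31,z_1}^2\neq 0$ guarantees that $f_{11},f_{31}$ are not both constant, while $\Delta_{12}\neq 0$ prevents the coframe from collapsing; together these pin the one-variable factors down to be affine in $z_1$ and a combination of exponential and affine terms in $z_0$, the exponential appearing precisely when the $z_0$-functions are forced to satisfy a second-order constant-coefficient linear ODE (e.g.\ $f_{22}''=f_{22}$ in the case $F\equiv 0$).

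Finally I would carry out the case split dictated by the statement: (a) $\lambda=\xi=\tau=0$, where $F\equiv 0$ and the identities collapse to $f_{11}=z_1$, $f_{31}=\eta$, $f_{12}=0$, $f_{22}=f_{32}=e^{z_0}$ after normalization; (b) $\lambda\neq 0$, where the $\lambda z_0$ term forces $f_{12},f_{32}$ affine in $z_0$, $f_{22}$ constant, and $f_{11},f_{31}$ proportional to $z_1$, producing the matrix with parameter $T$; and (c) $\lambda=0$, $\xi^2+\tau^2\neq 0$, where integrating the residual first-order relations gives $f_{11}=f_{31}=\int dz_1/F(z_1)$ and constant $f_{i2}$. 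In each case I solve the decoupled ODEs, substitute back into the full system to fix the relations among the integration constants, and absorb the remaining freedom (overall scalings and the reflection symmetries of (\ref{struct})) into the parameters $A,B,Q,T,\eta$ and the sign choices $\pm$, arriving at the displayed normal forms. The main obstacle is the bilinear coupling: separating the mixed products such as $f_{11}(z_1)f_{32}(z_0)$ and $f_{11}'(z_1)\,\lambda z_0$ cleanly, and then bookkeeping the integration constants across the three cases so that the not-yet-used structure equations are satisfied and the output matches the stated parametrizations exactly.
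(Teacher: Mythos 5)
First, a point of comparison: the paper offers no derivation of this lemma at all. It is recalled from Rabelo--Tenenblat \cite{RabeloTenenblat90} explicitly ``without proof,'' with only the remark that the reader can check directly, in each case, that the structure equations (\ref{struct}) hold if and only if $u$ solves (\ref{eqLin}). The computational core of your proposal is sound and subsumes that verification: using the preliminary lemma ($f_{11},f_{31}$ functions of $z_1$ only, $f_{12},f_{22},f_{32}$ functions of $z_0$ only, $f_{21}=\eta$) and $D_t z_1=F$ on solutions, your three functional identities $f_{12}'z_1-f_{11}'F=f_{31}f_{22}-\eta f_{32}$, $f_{22}'z_1=f_{11}f_{32}-f_{12}f_{31}$, $f_{32}'z_1-f_{31}'F=f_{11}f_{22}-\eta f_{12}$ are exactly correct, and substituting each of (\ref{fijLin1}), (\ref{fijLin2}), (\ref{fijLin3}) into them confirms the statement in all three cases (I checked: e.g.\ in case (c) each identity reduces to $-1=-1$ or $0=0$). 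Had you stopped there, you would have reproduced precisely what the paper intends.

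The genuine gap is in the necessity half, which is the bulk of your plan: you read the lemma as a uniqueness/classification claim and then assert, rather than prove, the decisive ``rigidity'' step --- that bilinear relations such as $f_{11}''f_{32}=f_{12}f_{31}''$ force either affineness of the $z_1$-factors or constant proportionality of the $z_0$-factors. This is not a one-line separation-of-variables fact: one must split into cases according to whether $f_{11},f_{31}$ (respectively the $z_0$-functions) are linearly dependent, rule out degenerations on open sets using $\Delta_{12}\neq 0$ and $f_{11,z_1}^2+f_{31,z_1}^2\neq 0$, and only then normalize integration constants; this case analysis is the entire content of the theorem in \cite{RabeloTenenblat90} and is exactly what your sketch defers (``the main obstacle''). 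Moreover, the strengthened statement you aim at is actually false as literal uniqueness: there is residual freedom the lemma silently fixes --- in case (a) any $f_{22}=f_{32}=Ce^{z_0}$, $C\neq 0$, satisfies the identities, (\ref{fijLin2}) already carries the free parameter $T$, and in case (c) the antiderivative $\int dz_1/F(z_1)$ is defined up to a constant --- so ``the coefficients are given as follows'' records normal forms from the proof in \cite{RabeloTenenblat90}, not an exhaustive list. In short: your setup and the sufficiency verification are correct; the classification argument, which is the part that goes beyond the paper, is missing its central step.
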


 Having recalled these results from \cite{RabeloTenenblat90}, we are now ready to proceed with the proof of Theorem \ref{HyperbRes}. 
The proof  consists of a number of technical lemmas and propositions, in which we analyze the existence of solutions  for the system of equations (\ref{Eq1}), (\ref{Eq2}) and (\ref{Gauss}) that depend on $u$ and finitely many derivatives, for each of  the  classes of hyperbolic equations obtained by Rabelo and Tenenblat in Theorem \ref{RTClassification}.

With the notation introduced in (\ref{deltaij}), equations (\ref{Eq1}) and (\ref{Eq2}) are written as  
\begin{eqnarray}\label{EQ1}
&& f_{11}D_ta + \eta D_tb - f_{12}D_xa - f_{22}D_xb - 2b\Delta_{13} + (a-c)\Delta_{23} = 0,\\
\label{EQ2}
&& f_{11}D_tb + \eta D_tc - f_{12}D_xb - f_{22}D_xc  + (a-c)\Delta_{13} + 2b\Delta_{23} = 0.
\end{eqnarray}

\begin{Lem}\label{Lem_ac}
Consider an equation $u_{xt}=F(u,u_x)$ describing $\eta$ pseudo-spherical surfaces, with 1-forms 
$\omega^i$ as in (\ref{forms}) where the functions $f_{ij}$ are given by (\ref{fijF''})-(\ref{fijLin3}). Assume there is a local isometric immersion of 
any pseudo-spherical surface, determined by a solution $u(x,t)$, for which the coefficients $a,\, b, \,c$ of the forms 
$\omega_1^3$ and $\omega_2^3$ depend on a jet of finite order of $u$. Then 
\begin{enumerate}
\item [i)] $a\neq 0$ on any open set.
\item [ii)] $c=0$ on an open set $U$ if, and only if, $f_{11}=0$ on $U$, i.e., $F$ satisfies (\ref{eqF}) and  $f_{ij}$ are given by (\ref{fijF''QA}) with $Q=0$.  
In this case, $\alpha=1/A^2>0$,   
\begin{equation}\label{abcf110}
a=\pm\dfrac{2}{A\alpha}  \dfrac{F'}{F},\quad  b=\pm 1,  \quad \text{ and } \quad c=0.
\end{equation}
\end{enumerate}
\end{Lem}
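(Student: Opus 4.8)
The plan is to exploit the system (\ref{EQ1})--(\ref{EQ2}) together with the Gauss equation (\ref{Gauss}), playing off the explicit coefficients $f_{ij}$ recorded in Lemmas \ref{LemF}--\ref{LemLin}. For part (i), I would argue by contradiction: suppose $a=0$ on some open set. The Gauss equation (\ref{Gauss}) then forces $b^2=1$, so $b=\pm 1$ is locally constant, whence $D_x b = D_t b = 0$. Substituting $a=0$ and $b=\pm 1$ into (\ref{EQ1}) and (\ref{EQ2}) collapses them to relations among $c$, the determinants $\Delta_{13},\Delta_{23}$, and the total derivatives $D_x c, D_t c$. The key observation is that in every case (\ref{fijF''})--(\ref{fijLin3}), at least one of $\Delta_{13},\Delta_{23}$ is a nonconstant function of the jet variables (consistent with (\ref{deltaijneq})), so that matching coefficients of the highest-order jet variable $z_{\ell+1}$ (or $w_{\ell+1}$) appearing in $D_x c, D_t c$ will force $c$ to be independent of that top jet, and an inductive descent then drives $c$ itself to satisfy an overdetermined system with no solution compatible with $\Delta_{13}^2+\Delta_{23}^2\neq 0$. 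The cleanest route is to show the reduced pair of equations is algebraically inconsistent with (\ref{deltaijneq}).

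For part (ii), the ``if'' direction is a direct verification: when $F$ satisfies (\ref{eqF}) with $Q=0$ so that $f_{11}=0$ as in (\ref{fijF''QA}), I would substitute the proposed $a=\pm\tfrac{2}{A\alpha}F'/F$, $b=\pm 1$, $c=0$ into (\ref{EQ1}), (\ref{EQ2}) and (\ref{Gauss}) and check that all three hold identically, using $F''=-\alpha F$ to simplify the derivatives of $F'/F$. Since $b=\pm1$ is constant and $c=0$, the total-derivative terms simplify drastically, and the computation reduces to confirming two scalar identities in $F,F'$.

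For the ``only if'' direction of (ii), I would assume $c=0$ on an open set and derive $f_{11}=0$. With $c=0$, the Gauss equation gives $b^2=1$, so $b=\pm1$ is constant and $D_x b = D_t b = 0$. Then (\ref{EQ1}) and (\ref{EQ2}) lose all their $b$-derivative and $c$-derivative terms, reducing to $f_{11}D_t a - f_{12}D_x a - 2b\Delta_{13} + a\Delta_{23}=0$ and $\mp(f_{11}D_t b - f_{12}D_x b) \cdots$ — more precisely to a pair coupling $D_t a, D_x a$ with $a$ and the determinants. The strategy is to compare these against the explicit $f_{ij}$ case by case. For the exponential equation (\ref{eqexp}) and the linear equation (\ref{eqLin}) I expect the resulting constraints to be incompatible with $f_{11}\neq 0$ (again by examining the top-jet coefficients and invoking (\ref{deltaijneq})), leaving only the case (\ref{eqF}); within that case, the surviving equation forces $Q=0$, hence $f_{11}=0$, and then pins down $a=\pm\tfrac{2}{A\alpha}F'/F$ as claimed.

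The main obstacle will be the ``only if'' part of (ii): ruling out $f_{11}=0$ for the exponential and linear families requires a careful jet-order bookkeeping of how $D_x a, D_t a$ depend on the highest derivatives, since $a$ is only known to depend on \emph{some} finite jet $x,t,z_0,z_1,w_1,\dots,z_\ell,w_\ell$ via (\ref{TotDerx})--(\ref{TotDert}), and one must extract enough independent relations to eliminate every possibility except $f_{11}\equiv 0$. The verification direction and part (i) should be comparatively routine once the coefficient substitutions are in place.
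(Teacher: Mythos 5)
Your proposal has two genuine gaps. The most serious is in the ``if'' direction of part (ii): proving that $f_{11}=0$ implies $c=0$ is \emph{not} a matter of verifying that the triple (\ref{abcf110}) satisfies (\ref{EQ1}), (\ref{EQ2}) and (\ref{Gauss}). Such a verification only shows that \emph{some} admissible second fundamental form with $c=0$ exists; the lemma asserts that \emph{every} triple $(a,b,c)$ depending on a finite-order jet must have $c=0$ whenever the $f_{ij}$ are given by (\ref{fijF''QA}) with $Q=0$. The paper proves this by contradiction: assuming $c\neq 0$, it writes $a=(b^2-1)/c$, runs a jet-descent (successive differentiation with respect to $w_{\ell+1},\dots,w_1$ and $z_{\ell+1},\dots,z_2$) to conclude that $a,b,c$ depend on $x,t$ only, and then differentiates the reduced system with respect to $z_1$ to obtain a $2\times 2$ linear system in $(\alpha A F, F')$ whose matrix has entries $2b$ and $a-c$; since $F$ and $F'$ do not vanish this forces $a-c=\pm 2b$, and a further $z_0$-differentiation together with $F''=-\alpha F$ gives $b(AF'\mp F)=0$, each branch of which contradicts either the Gauss equation or $\alpha A^2=1>0$. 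Nothing in your plan substitutes for this argument, so as written the biconditional in (ii) is not proved.

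The second gap is structural and affects both part (i) and the ``only if'' direction of (ii): you overlook that once $a=0$ (resp.\ $c=0$) and $b=\pm1$ is constant, \emph{one of the two equations becomes purely algebraic} -- every derivative term drops out, leaving $\mp 2\Delta_{13}-c\Delta_{23}=0$ (resp.\ $a\Delta_{13}\pm 2\Delta_{23}=0$). Hence $c=\mp 2\Delta_{13}/\Delta_{23}$ (resp.\ $a=\mp 2\Delta_{23}/\Delta_{13}$) is an explicit function of $z_0,z_1$ alone, and the ``careful jet-order bookkeeping'' you flag as the main obstacle simply evaporates: one only needs to differentiate the single remaining equation with respect to $w_1$ and $z_2$, getting $f_{22}c_{z_1}=c_{z_0}=0$ in case (i) and $f_{11}a_{z_0}=f_{12}a_{z_1}=0$ in case (ii), after which a short case analysis on the vanishing of $f_{22}$ (resp.\ of $f_{11}$ and $f_{12}$), using (\ref{deltaijneq}) and the explicit lists (\ref{fijF''})--(\ref{fijLin3}), finishes the proof. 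Your description of the reduced system as ``a pair coupling $D_t a, D_x a$ with $a$ and the determinants'' misses exactly this point, and without it your inductive-descent plan for (i) and your case-by-case elimination for (ii) remain programmatic hopes rather than arguments; the paper's route, by contrast, turns both parts into finite, concrete computations.
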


\begin{proof} If there is a local isometric immersion of the  pseudo-spherical surface, then 
(\ref{EQ1}), (\ref{EQ2}) and (\ref{Gauss}) must be satisfied by $a$, $b$ and $c$.   

\vspace{.1in}

 i) Assume  $a=0$ on an open set, then it follows from (\ref{Gauss}) that $b\pm 1$. 
Substituting into (\ref{EQ1}) and (\ref{EQ2}) leads to 
\begin{eqnarray}
&& \mp 2\Delta_{13}-c\Delta_{23}=0, \label{a1}\\
&& \eta D_t c-f_{22} D_x c -c\Delta_{13}\pm 2\Delta_{23}=0.\label{a2}
\end{eqnarray}
It follows from (\ref{a1}) and (\ref{deltaijneq}) that $\Delta_{23}\neq 0$ and 
$c=\mp 2\Delta_{13}/\Delta_{23}$. 
Since $\Delta_{13}$ and $\Delta_{23}$ depend only on $z_0$ and $z_1$, we conclude that 
$c$ depends only on $z_0$ and $z_1$ and (\ref{a2}) reduces to
\begin{equation}\label{a4}
\eta(c_{z_1}F + c_{z_0}w_1)-f_{22}(c_{z_0} z_1+c_{z_1}z_2)-c\Delta_{13}\pm 2\Delta_{23}=0.
\end{equation}
Taking the derivative of this equation with respect to $z_2$ and $w_1$ implies that 
$f_{22}c_{z_1}=0$ and $c_{z_0}=0$. If $f_{22}\neq 0$ then  $c$ is constant and 
(\ref{a4}) reduces to $-c\Delta_{13}\pm2\Delta_{23}=0$ i.e., we have 
\[
\left(\begin{array}{cc} -c &\pm 2\\ \mp 2 & -c\end{array}\right)
\left(\begin{array}{c} \Delta_{13}\\ \Delta_{23}\end{array}\right)=
\left(\begin{array}{c} 0\\ 0\end{array}\right).
\]
Since the determinant is nonzero, it implies that $\Delta_{13}=\Delta_{23}=0$ 
which contradicts (\ref{deltaijneq}). If $f_{22}=0$ on an open set, then the 
functions $f_{ij}$ are given by (\ref{fijLin3}) and hence $\Delta_{13}=0$ and 
$\Delta_{23}=1$. Then (\ref{a1}) implies that $c=0$ and (\ref{a2}) gives a contradiction.
This concludes the proof of i). 

\vspace{.1in} 

ii) Observe that except for the functions $f_{ij}$ given by (\ref{fijF''QA}) with $Q=0$, $f_{11}$ does not vanish on an open set.   
We will first show that if $f_{11}=0$ on an open set i.e, $F$ satisfies (\ref{eqF}) and $f_{ij}$ are given by (\ref{fijF''QA}) and $Q=0$, then $c=0$.  In fact, for such $f_{ij}$s  we have 
$\Delta_{13}=-A^2\alpha^2 F(u) z_1/\eta$, $\Delta_{23}= A\alpha F'(u) z_1/\eta$, $\alpha=1/A^2>0$ and $A\neq 0$. Hence (\ref{EQ1}) and (\ref{EQ2}) reduce to 
\begin{eqnarray*}
\eta D_tb -f_{12}D_xa-f_{22}D_xb-2b\Delta_{13}+(a-c)\Delta_{23}=0,\\ 
\eta D_t c-f_{12}D_xb-f_{22}D_xc+(a-c)\Delta_{13}+2b\Delta_{23}=0, 
\end{eqnarray*}
where $f_{12}=-\alpha AF/\eta$ and $f_{22}=F'/\eta$. Assume $c\neq0$, then it follows from (\ref{Gauss}) that $a=(b^2-1)/c$. Assume that $a$,$b$ and $c$ depend on a jet of order $\ell$ of $u$. For $\ell\geq 1$,  
taking derivatives of both equations with respect to $w_{\ell+1}$ implies that $b_{w_\ell}=c_{w_\ell}=0$ and 
hence $a_{w_k}=0$. Successive differentiation  with respect to $w_k$,...$w_1$ imply that $a$, $b$ and 
$c$ do not depend on $w_\ell$,...$w_0$. Successive differentiation with respect to $z_{\ell+1}$, ...$z_2$ imply 
that $a$, $b$ and $c$ do not depend on $z_\ell$,...$z_1$. Hence, $a$, $b$ and $c$ depend only on $x$ and $t$.  Therefore, the above system of equations reduce to
\begin{eqnarray*}
\eta b_t +\frac{\alpha A F}{\eta} a_x-\frac{F'}{\eta}b_x +2b\frac{\alpha^2 A^2 F}{\eta} z_1 +(a-c)\frac{\alpha AF'}{\eta}z_1=0,  \\
\eta c_t +\frac{\alpha A F}{\eta} b_x-\frac{F'}{\eta}c_x +(a-c)\frac{\alpha^2 A^2 F}{\eta} z_1 + 2b \frac{\alpha AF'}{\eta}z_1=0. 
\end{eqnarray*} 
Taking the derivative with respect to $z_1$ we get 
\begin{equation}\label{a-c}
\left( \begin{array}{cc}
2b & a-c\\ a-c & 2b 
\end{array}\right)
\left( \begin{array}{c}
\alpha A F\\ F'
\end{array}\right)  =
\left( \begin{array}{c}
0\\ 0
\end{array}\right).
\end{equation}
Since $\alpha A F$ and $F'$ are not zero we get $a-c=\pm 2b$ and the derivative with respect to 
$z_0$ of any equation of (\ref{a-c}) reduces to $b(AF'\mp F)=0$ as a consequence of (\ref{eqF}). 
If $b=0$ then Gauss equation (\ref{Gauss}) reduces to $a^2=-1$. If $F=\pm AF'$ then the derivative with respect to $z_0$ implies that $\alpha A^2=-1$. In both cases we get a contradiction. Therefore, $c=0$.    

\vspace{.05in}

 Conversely, assume $c=0$ on an open set, then (\ref{Gauss}) implies $b=\pm 1$ and  (\ref{EQ1}) and 
(\ref{EQ2}) reduce to 
\begin{eqnarray}
&& f_{11} D_t a -f_{12} D_x a \mp 2\Delta_{13}+ a \Delta_{23}=0,\label{c1}\\
&& a\Delta_{13}\pm 2 \Delta_{23}=0. \label{c2}
\end{eqnarray}
It follows from (\ref{c2}) and (\ref{deltaijneq}) that $\Delta_{13}\neq 0$ and 
$ a=\mp 2\Delta_{23}/\Delta_{13}$.   
Since $\Delta_{13}$ and $\Delta_{23}$ depend only on $z_0$ and $z_1$, we conclude that 
$a$ depends only on $z_0$ and $z_1$ and (\ref{c1}) reduces to
\begin{equation}\label{c4}
f_{11}(a_{z_1} F+a_{z_0} w_1)-f_{12}(a_{z_0}z_1+a_{z_1}z_2)\mp2\Delta_{13}+a\Delta_{23}=0.
 \end{equation}
Differentiation with respect to $w_1$ and $z_2$ implies 
\begin{equation}\label{c5}
f_{11}a_{z_0}= f_{12}a_{z_1}=0.
\end{equation} 
Since $\Delta_{12}\neq 0$, we observe that $f_{11}$ and $f_{12}$ cannot vanish simultaneously. 

If  both $f_{11}\neq 0$ and $f_{12}\neq 0$ then from (\ref{c5}) we conclude that 
$a$ is constant and (\ref{c4}) reduces to $\mp 2\Delta_{13}+ a \Delta_{23}=0$. 
This equation with (\ref{c2}) implies that $\Delta_{13}=\Delta_{23}=0$ which contradicts (\ref{deltaijneq}).

If $f_{12}=0$ on an open set,  then $f_{ij}$ are given by (\ref{fijF''}) with $A=0$, $B\neq 0$  or (\ref{fijexp}) with $A=0$, $B\neq 0$ or (\ref{fijLin1}). Since $f_{11}\neq 0$, it follows from 
(\ref{c5}) that $a_{z_0}=0$ and (\ref{c4}) reduces to 
\begin{equation}\label{c8}
f_{11}a_{z_1}F\mp 2\Delta_{13}+a\Delta_{23}=0.
\end{equation}   
If  $f_{ij}$ are given by (\ref{fijF''}) with $A=0$, $B\neq 0$, then 
\[
\Delta_{13}=\frac{\alpha(QF'-\eta F }{Q^2\alpha+\eta^2}z_1,\qquad \Delta_{23}=-B\alpha F, \qquad 
a_{z_1}z_1=-a.
\]
Substituting into (\ref{c8}) and differentiating twice with respect to $z_1$ runs into a  contradiction.
If $f_{ij}$ are given by (\ref{fijexp}) with $A=0$, $B\neq 0$, then  
\[
\Delta_{13}=\mp B\nu \delta^2 z_1 e^{\delta z_0},\qquad \Delta_{23}=\pm \nu \delta  z_1 e^{\delta z_0}, \qquad 
a=\pm \frac{2}{B\delta}.
\]
Therefore, (\ref{c8}) reduces to $\mp 2\Delta_{13}+a\Delta_{23}=0$ which is in contradiction with 
(\ref{c2}). Finally if $f_{ij}$ are given by  (\ref{fijLin1}), then $\Delta_{23}=0$, hence it follows from (\ref{deltaijneq}) and (\ref{c2}) that $a=0$
 which is a contradiction. 
 
  We conclude that if $c=0$ on an open set, then 
 $f_{11}=0$ i.e.,  $f_{ij}$ are given by (\ref{fijF''QA}) with $Q=0$. Therefore 
$\Delta_{13}=-A^2\alpha^2 F(u) z_1/\eta$, $\Delta_{23}= A\alpha F'(u) z_1/\eta$ and 
hence  (\ref{c2}) implies that $a=\pm 2 F'/(A\alpha F)$. Moreover, (\ref{c4}) is an identity since 
$A^2\alpha=1$.  This concludes the proof of Lemma \ref{Lem_ac}.\end{proof}

\vspace{.1in}

Consider an equation $u_{xt}=F(u,u_x)$ describing $\eta$ pseudo-spherical surfaces given by Lemmas 6-8.
The existence of  a local isometric immersion in $\mathbb{R}^3$ of any pseudo-spherical surface, determined by 
a solution $u$,  for which  the coefficients  
$a$, $b$ and $c$ depend on $x,t,z_0,z_1,w_1,...z_\ell,w_\ell$, is equivalent to requiring that (\ref{EQ1}), (\ref{EQ2}) and (\ref{Gauss}) must be satisfied.    
Substituting the expressions of the total derivatives with respect to $x$ and $t$ given by (\ref{TotDerx}) and (\ref{TotDert}), we rewrite  (\ref{EQ1}) and (\ref{EQ2}) as 
\begin{equation}
\begin{split}\label{Eq1k}
&f_{11}a_t + \eta b_t + \sum_{i=0}^\ell(f_{11}a_{w_i}+ \eta b_{w_i})w_{i+1} + \sum_{i=1}^\ell(f_{11}a_{z_i}+ \eta b_{z_i})\dfrac{\partial^{i-1}F}{\partial x^{i-1}} -( f_{12}a_x + f_{22} b_x) \\ &- \sum_{i=0}^\ell (f_{12}a_{z_i}+ f_{22} b_{z_i})z_{i+1} - \sum_{i=1}^\ell(f_{12}a_{w_i}+ f_{22} b_{w_i})\dfrac{\partial^{i-1}F}{\partial t^{i-1}}- 2b\Delta_{13} + (a-c)\Delta_{23} = 0,
\end{split}
\end{equation}
and 
\begin{equation}
\begin{split}\label{Eq2k}
&f_{11}b_t + \eta c_t + \sum_{i=0}^\ell(f_{11}b_{w_i}+ \eta c_{w_i})w_{i+1} + \sum_{i=1}^\ell(f_{11}b_{z_i}+ \eta c_{z_i})\dfrac{\partial^{i-1}F}{\partial x^{i-1}} -( f_{12}b_x + f_{22} c_x) \\ &- \sum_{i=0}^\ell(f_{12}b_{z_i}+ f_{22} c_{z_i})z_{i+1} - \sum_{i=1}^\ell(f_{12}b_{w_i}+ f_{22} c_{w_i})\dfrac{\partial^{i-1}F}{\partial t^{i-1}} + (a-c)\Delta_{13} + 2b\Delta_{23} = 0.
\end{split}
\end{equation}
Differentiating (\ref{Eq1k}) and (\ref{Eq2k}) with respect to $w_{\ell+1}$ leads to 
\begin{equation}\label{Cons1}
f_{11}a_{w_\ell} + \eta b_{w_\ell} = 0 \qquad f_{11}b_{w_\ell} + \eta c_{w_\ell} = 0.
\end{equation} 
Differentiation of the Gauss equation (\ref{Gauss}) with respect to $w_\ell$ gives $ca_{w_\ell} + ac_{w_\ell} - 2bb_{w_\ell} = 0$. Taking into account (\ref{Cons1}) in the latter, we obtain 
\begin{equation}\label{Eqpro}
\bigg[ c +  \bigg(\dfrac{f_{11}}{\eta}\bigg)^2a +  2\dfrac{f_{11}}{\eta} b\bigg]a_{w_\ell} = 0.
\end{equation}
The following two lemmas will consider the cases in which the expression between brackets in
(\ref{Eqpro}) vanishes or not on an open set.

\begin{Lem} \label{Claim I} 
Consider an equation $u_{xt}=F(u,u_x)$ describing $\eta$ pseudo-spherical surfaces, with 1-forms 
$\omega^i$ as in (\ref{forms}) where the functions $f_{ij}$ are given by (\ref{fijF''})-(\ref{fijLin3}). Assume there is a local isometric immersion of 
a pseudo-spherical surface determined by a solution $u(x,t)$, for which  the coefficients $a,\, b, \,c$ of the second fundamental form depend on a jet of finite order of $u$.  
 If 
\begin{equation}\label{Eqcf11}
c +  \bigg(\dfrac{f_{11}}{\eta}\bigg)^2a +  2\dfrac{f_{11}}{\eta} b = 0
\end{equation}
 on a non empty open set, then   
\begin{enumerate}
\item [i)] For equation (\ref{eqF}) with $f_{ij}$ as in  (\ref{fijF''QA})
$a,\,b$ and $c$ are given by 
\begin{equation}\label{newabc}
a= \pm\frac{2\eta}{A (Q^2\alpha+\eta^2)}\left( \frac{\eta F'}{\alpha F}+ Q \right), \qquad
b=\mp\frac{1}{ Q^2\alpha +\eta^2}\left(2\eta Q\frac{F'}{F}+ Q^2\alpha -\eta^2\right), 
\end{equation}
\[
c=\pm\frac{2QA\alpha}{ Q^2\alpha +\eta^2}\left(Q\frac{F'}{F} -\eta\right), 
\]
where $\alpha=1/A^2$. In particular when $Q=0$, $a, b, c$ are given by  (\ref{abcf110}). 

\item [ii)] For all equations, except those considered in i),  equations  
(\ref{EQ1}), (\ref{EQ2}) and (\ref{Gauss}) form an inconsistent system. 
\end{enumerate}
\end{Lem}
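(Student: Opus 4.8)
The plan is to reduce all three components to the single function $a$ by means of the constraint (\ref{Eqcf11}), exactly as in the evolution case treated in Lemma \ref{LemCoeff}. Solving (\ref{Eqcf11}) for $c$ and inserting the result into the Gauss equation (\ref{Gauss}) collapses it to $\big(\tfrac{f_{11}}{\eta}a+b\big)^{2}=1$, whence on the given open set
\[
b = \pm 1 - \frac{f_{11}}{\eta}\,a, \qquad c = \Big(\frac{f_{11}}{\eta}\Big)^{2} a \mp 2\,\frac{f_{11}}{\eta}.
\]
First I would recall that in the hyperbolic case $f_{11}$ depends only on $z_{1}$ while $f_{12},f_{22}$ depend only on $z_{0}$, and that $z_{1,t}=F$, $z_{1,x}=z_{2}$. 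Using these facts the total-derivative combinations occurring in (\ref{EQ1}) and (\ref{EQ2}) telescope: $f_{11}D_{t}a+\eta D_{t}b=-a\,f_{11,z_{1}}F$ and $f_{11}D_{t}b+\eta D_{t}c=\big(\tfrac{f_{11}}{\eta}a\mp 2\big)f_{11,z_{1}}F$, with $z_{2}$ replacing $F$ for the $x$-derivatives. Writing $f_{11}f_{22}-\eta f_{12}=\Delta_{12}$ then turns (\ref{EQ1}) and (\ref{EQ2}) into two equations for $a$ alone, the first being $-a\,f_{11,z_{1}}F+\tfrac{\Delta_{12}}{\eta}D_{x}a+\tfrac{f_{22}}{\eta}a\,f_{11,z_{1}}z_{2}-2b\Delta_{13}+(a-c)\Delta_{23}=0$ and the second its companion.

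The decisive dichotomy is whether $f_{11,z_{1}}$ vanishes. Inspecting the normal forms (\ref{fijF''})-(\ref{fijLin3}) one checks that $f_{11}$ is constant in $z_{1}$ precisely for equation (\ref{eqF}) with coefficients (\ref{fijF''QA}) (where $B=0$ and $f_{11}=\alpha A Q$), while $f_{11,z_{1}}\neq 0$ in every other family. For the non-constant families (part ii)) I would substitute the explicit $\Delta_{13},\Delta_{23},\Delta_{12}$ of each family and differentiate the two reduced equations with respect to the highest-order jet variables present (treating the low-order cases $\ell=0,1$ separately, as in Lemma \ref{LemCoeff}) in order to strip off the $D_{x}a$ and $D_{t}a$ contributions and force the jet dependence of $a$ to collapse; the remaining algebraic relations, combined with the Gauss equation and the structural inequality $\Delta_{13}^{2}+\Delta_{23}^{2}\neq 0$ of (\ref{deltaijneq}), are then shown to be contradictory in each family, typically forcing $\Delta_{13}=\Delta_{23}=0$, or $f_{11,z_{1}}=0$ against the hypothesis, or an impossibility such as $a^{2}=-1$, mirroring the contradictions already met in the proof of Lemma \ref{Lem_ac}.

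For the constant case (\ref{fijF''QA}) (part i)), $f_{11,z_{1}}=0$ kills the $F$- and $z_{2}$-terms and the two reduced equations become
\[
\frac{\Delta_{12}}{\eta}\,D_{x}a - 2b\,\Delta_{13} + (a-c)\,\Delta_{23}=0, \qquad -\frac{f_{11}\Delta_{12}}{\eta^{2}}\,D_{x}a + (a-c)\,\Delta_{13} + 2b\,\Delta_{23}=0.
\]
Eliminating $D_{x}a$ (multiply the first by $f_{11}/\eta$ and add to the second) yields a purely algebraic relation which, after inserting the parametrization of $b,c$, simplifies to $a\Delta_{13}+(\pm 2-\tfrac{f_{11}}{\eta}a)\Delta_{23}=0$. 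Using the explicit factors $\Delta_{13}=\alpha z_{1}(QF'-\eta F)/(Q^{2}\alpha+\eta^{2})$ and $\Delta_{23}=\alpha A z_{1}(\eta F'+Q\alpha F)/(Q^{2}\alpha+\eta^{2})$, both proportional to $z_{1}$, and dividing by $z_{1}$, this relation depends on $z_{0}$ only through $F'/F$ and hence determines $a$ as a function of $z_{0}$, giving the first formula of (\ref{newabc}) once $A^{2}\alpha=1$ is used; $b$ and $c$ then follow from the parametrization. Substituting $a=a(z_{0})$, so that $D_{x}a=a'z_{1}$, back into either reduced equation confirms consistency, and the specialization $Q=0$ returns (\ref{abcf110}), in agreement with Lemma \ref{Lem_ac}(ii).

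I expect the main obstacle to be part ii): although the reduction mechanism is uniform, it has to be executed separately for (\ref{fijF''}) with $B\neq 0$, for (\ref{fijexp}) and (\ref{fijexp1}), and for (\ref{fijLin1})-(\ref{fijLin3}), and in each case both the jet-reduction step and the final algebraic contradiction hinge on the particular shape of $F$ and of the $\Delta_{ij}$. Keeping track of the $\pm/\mp$ branches and of the degenerate subcases ($A=0$ or $B=0$ within a family) is where the bookkeeping will be heaviest.
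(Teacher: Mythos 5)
Your setup and your part i) coincide with the paper's own proof: the parametrization (\ref{b=c=Case1}) obtained by combining (\ref{Eqcf11}) with the Gauss equation, the telescoped combinations $f_{11}D_ta+\eta D_tb=-af_{11,z_1}F$ and $f_{11}D_tb+\eta D_tc=\big(\tfrac{f_{11}}{\eta}a\mp 2\big)f_{11,z_1}F$, the reduced equations (\ref{Eq1cf110})--(\ref{Eq2cf110}), the dichotomy on $f_{11,z_1}$ (which indeed vanishes exactly for (\ref{fijF''QA})), and the elimination of $D_xa$ leading to $a\Delta_{13}-\big(\tfrac{f_{11}}{\eta}a\mp 2\big)\Delta_{23}=0$ and thence to (\ref{newabc}) are precisely the paper's steps; your observation that the elimination kills the whole $D_xa$ term, so that no prior reduction of the jet dependence of $a$ is needed in part i), is a harmless streamlining.

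Part ii), however, is where the bulk of the lemma lives, and there your proposal has a genuine gap: both the decisive intermediate results and the actual source of the contradiction are missing, and the endgame you anticipate is not the one that occurs. In the paper, differentiating (\ref{Eq1cf110})--(\ref{Eq2cf110}) with respect to $z_2$ (for $\ell\geq 1$; the case $\ell=0$ is excluded first, since there the same differentiation forces $f_{11,z_1}=0$, using $a\neq 0$ from Lemma \ref{Lem_ac}) yields the two relations (\ref{Diffz1Eq1-1})--(\ref{Diffz1Eq2-1}). When $f_{22}=0$ (family (\ref{fijLin3})) these force $a=0$, contradicting Lemma \ref{Lem_ac}. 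But when $f_{22}\neq 0$ --- i.e.\ for all the remaining families --- relation (\ref{Diffz1Eq2-1}) pins $a=\pm 2\eta f_{22}/\Delta_{12}$: the jet dependence of $a$ does \emph{not} collapse to $(x,t)$, contrary to your plan; $a$ remains a nontrivial function of $(z_0,z_1)$. The contradiction is then obtained by substituting this expression for $a$ back into the reduced equations, which produces the functional equation (\ref{FCont}), $F=(f_{11}^2+\eta^2)f_{32}/(\eta f_{11,z_1})$, and by verifying family by family ((\ref{fijF''}) with $B\neq 0$, (\ref{fijexp}), (\ref{fijexp1}), (\ref{fijLin1}), (\ref{fijLin2})) that this identity is incompatible with the classified $F$ and $f_{ij}$. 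None of the contradictions you forecast ($\Delta_{13}=\Delta_{23}=0$, $f_{11,z_1}=0$, or $a^2=-1$) is what delivers this main branch, so executing your plan as written would stall: the missing ideas are the algebraic pinning of $a$ by (\ref{Diffz1Eq2-1}) and the derivation and refutation of the constraint (\ref{FCont}) on $F$.
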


\vspace{.1in}

\begin{proof}  
If (\ref{Eqcf11}) holds then substituting $c$ into the Gauss equation (\ref{Gauss}) leads to 
$(f_{11}a/{\eta}  + b)^2 = 1$, and hence 
\begin{equation}\label{b=c=Case1}
b= \pm 1 - \dfrac{f_{11}}{\eta}a \quad  \text{ and }  
\quad c = \bigg(\dfrac{f_{11}}{\eta}\bigg)^2a  \mp 2\dfrac{f_{11}}{\eta}.
\end{equation}

Therefore,
\begin{eqnarray*}
f_{11}D_ta + \eta D_tb &=& - af_{11, z_1}F,\\
f_{12}D_xa + f_{22} D_xb &=& -\dfrac{\Delta_{12}}{\eta}D_xa - \dfrac{af_{22}f_{11,z_1}}{\eta}z_2,\\
f_{11}D_tb + \eta D_tc &=& \dfrac{af_{11}f_{11, z_1}}{\eta}F \mp 2 f_{11,z_1}F,\\
f_{12}D_xb + f_{22} D_xc &=& \dfrac{f_{11}\Delta_{12}}{\eta^2}D_xa + \dfrac{\Delta_{12}af_{11,z_1}}{\eta^2}z_2 + \dfrac{af_{22}f_{11}f_{11,z_1}}{\eta^2} z_2 \mp 2\dfrac{f_{22}f_{11,z_1}}{\eta}z_2. 
\end{eqnarray*}
Equation (\ref{EQ1})  becomes
\begin{equation}\label{Eq1cf110}
- af_{11, z_1}F +\dfrac{\Delta_{12}}{\eta}D_xa + \dfrac{af_{22}f_{11,z_1}}{\eta}z_2  \mp 2\Delta_{13} + 2\dfrac{f_{11}}{\eta}a\Delta_{13} + \bigg[1-\bigg(\dfrac{f_{11}}{\eta}\bigg)^2\bigg]a\Delta_{23}  \pm2\dfrac{f_{11}}{\eta}\Delta_{23} = 0
\end{equation}
and (\ref{EQ2}) becomes
\begin{equation}
\begin{split}\label{Eq2cf110}
&\dfrac{af_{11}f_{11, z_1}}{\eta}F \mp 2 f_{11,z_1}F
- \dfrac{f_{11}\Delta_{12}}{\eta^2}D_xa - \dfrac{\Delta_{12}af_{11,z_1}}{\eta^2}z_2 - \dfrac{af_{22}f_{11}f_{11,z_1}}{\eta^2}z_2  \pm 2\dfrac{f_{22}f_{11,z_1}}{\eta}z_2\\
&+ \bigg[1-\bigg(\dfrac{f_{11}}{\eta}\bigg)^2\bigg]a\Delta_{13}  \pm2\dfrac{f_{11}}{\eta}\Delta_{13}  \pm 2\Delta_{23} - 2\dfrac{f_{11}}{\eta}a\Delta_{23} = 0. 
\end{split}
\end{equation}

If $\ell \geqslant 2$, then differentiating  (\ref{Eq1cf110}) with respect to $z_{\ell+1}$ leads to $a_{z_\ell} = 0$. Successive differentiation with respect to $z_\ell, ..., z_3$ leads to $a_{z_\ell} = a_{z_{\ell-1}} = \dots = a_{z_2} = 0$. If $\ell\geq 1$, then differentiating  (\ref{Eq1cf110}) and (\ref{Eq2cf110})with respect to $z_{2}$ leads to 
\begin{eqnarray*}
\Delta_{12}a_{z_1} + af_{22}f_{11,z_1} = 0,\\ 
-f_{11}\Delta_{12}a_{z_1} - \Delta_{12}af_{11, z_1} - af_{22}f_{11}f_{11,z_1} \pm 2\eta  f_{22}f_{11, z_1} = 0, 
\end{eqnarray*}
which is equivalent to 
\begin{eqnarray}\label{Diffz1Eq1-1}
\Delta_{12}a_{z_1} + af_{22}f_{11,z_1} = 0, \\\label{Diffz1Eq2-1}
(\Delta_{12}a  \mp 2\eta  f_{22}) f_{11, z_1} = 0. 
\end{eqnarray}

\vspace{.1in}

i) For equation (\ref{eqF}) with $f_{ij}$ given by (\ref{fijF''QA}) 
we have $f_{11,z_1}=0$.  Hence (\ref{Diffz1Eq2-1}) is trivially satisfied 
and (\ref{Diffz1Eq1-1}) implies that $a_{z_1}=0$.  Moreover, (\ref{Eq1cf110}) and 
(\ref{Eq2cf110}) reduce to
\begin{eqnarray}
\label{Eq1f11z10}
 \frac{\Delta_{12}}{\eta}D_xa+2\left( \frac{f_{11}}{\eta} a\mp 1\right)\Delta_{13}
 +\left[ \left(1-\frac{f_{11}^2}{\eta^2} \right) a\pm 2\frac{f_{11}}{\eta}\right]\Delta_{23},\\ 
\label{Eq2f11z10} 
 -f_{11}\frac{\Delta_{12}}{\eta^2}D_xa
 +\left[\left(1-\frac{f_{11}^2}{\eta^2} \right) a \pm 2\frac{f_{11}}{\eta}\right] \Delta_{13}
 -2\left(\frac{f_{11}}{\eta} a\mp 1\right)\Delta_{23}.
\end{eqnarray}
Adding equation (\ref{Eq1f11z10}) multiplied by $f_{11}/\eta$ with (\ref{Eq2f11z10}) and cancelling a nonzero factor, we get 
\[
a\Delta_{13}-\left(\frac{f_{11}a}{\eta}\mp 2\right)\Delta_{23}=0. 
\]
Since $\Delta_{13}-f_{11}\Delta_{23}/\eta=f_{31}\Delta_{12}/\eta$, we conclude that  $a=\mp 2\Delta_{23}\eta/(f_{31}\Delta_{12})$. For the functions $f_{ij}$ as in 
(\ref{fijF''QA}) we have $f_{31}= -\alpha Az_1\neq 0$ and 
\[
\Delta_{12}=\alpha A F, \qquad \Delta_{13}=\frac{\alpha(QF'-\eta F)}{Q^2\alpha +\eta^2}z_1, 
\qquad \Delta_{23}=\frac{\alpha A(\eta F'+\alpha Q F)}{Q^2\alpha +\eta^2}z_1.
\]
Therefore, we conclude that $a$ is given by 
\[
a= \pm\frac{2\eta}{\alpha Q^2+\eta^2}\left( \eta A \frac{F'}{F}+\frac{Q}{A} \right).
\]
 A straightforward computation shows that substituting  the expressions of $a$, $D_xa=a_{z_0}z_1$, $f_{11}=\alpha AQ$ and using the fact 
that $\alpha A^2=1$ equation  (\ref{Eq1f11z10}) is trivially satisfied. It follows from 
(\ref{b=c=Case1}) that $b$ and $c$ are given as in (\ref{newabc}). Observe that when $Q=0$ 
then (\ref{newabc} reduces to (\ref{abcf110}). 

\vspace{.1in}

ii) For all equations except those considered in i) we have $f_{11,z_1}\neq 0$. 

If $\ell=0$, then differentiating  (\ref{Eq1cf110}) and (\ref{Eq2cf110}) with respect to $z_{2}$ leads to $af_{22}f_{11,z_1} = 0$ and to $\Delta_{12}af_{11,z_1} + af_{22}f_{11}f_{11,z_1} \mp2\eta f_{22}f_{11,z_1} = 0$. From Lemma \ref{Lem_ac} $a\neq 0$, hence $f_{22}f_{11,z_1}=0$ and $\Delta_{12} f_{11,z_1}=0$. This implies that $f_{11,z_1}=0$ which is a contradiction. Therefore,  $\ell\geq 1$.

If $f_{22} =0$, which is the case for equation (\ref{eqLin}) with $f_{ij}$ given by (\ref{fijLin3}), then  (\ref{Diffz1Eq1-1}) and (\ref{Diffz1Eq2-1}) leads to $a=0$ which contradicts Lemma \ref{Lem_ac}. Thus, (\ref{EQ1}), (\ref{EQ2}), and the Gauss equation form an  inconsistent system.  

 If $f_{22}\neq 0$, (which is the case for all equations except (\ref{eqLin}) with $\lambda=0,\, \xi^2+\tau^2\neq 0$)   then dividing (\ref{Diffz1Eq2-1}) by $f_{11,z_1}$  leads to $\Delta_{12}a \mp 2 \eta f_{22} = 0$, and differentiating the latter with respect to $z_1$ gives $\Delta_{12}a_{z_1} + a \Delta_{12, z_1}=0$, where from (\ref{deltaij}) we have  $\Delta_{12,z_1} = f_{22}f_{11,z_1}$. Therefore, (\ref{Diffz1Eq1-1}) is a consequence of (\ref{Diffz1Eq2-1}). From (\ref{Diffz1Eq2-1}), we have 
\begin{equation}
a=\pm 2\eta \dfrac{f_{22}}{\Delta_{12}}, 
\end{equation}
which means that $a_{w_1} = a_{x}=a_{t} = 0$, i.e., $a$ is a function of $z_0$ and $z_1$ only. Equations (\ref{Eq1cf110}) and (\ref{Eq2cf110}) become
\begin{eqnarray*}
-af_{11,z_1}F+\dfrac{\Delta_{12}}{\eta}a_{z_0}z_1 \mp 2\Delta_{13} + 2\dfrac{f_{11}}{\eta}a\Delta_{13}+ \bigg[1-\bigg(\dfrac{f_{11}}{\eta}\bigg)^2\bigg]a\Delta_{23}  \pm2\dfrac{f_{11}}{\eta}\Delta_{23}=0, \\
\dfrac{af_{11}f_{11,z_1}F}{\eta}\mp 2f_{11,z_1}F - f_{11}\dfrac{\Delta_{12}}{\eta^2}a_{z_0}z_1 + \bigg[1-\bigg(\dfrac{f_{11}}{\eta}\bigg)^2\bigg]a\Delta_{13}
\pm 2\frac{f_{11}}{\eta} \Delta_{13} 
\pm2\Delta_{23} - 2\dfrac{f_{11}}{\eta}a\Delta_{23}=0,
\end{eqnarray*}
which are equivalent to 
\begin{eqnarray}\label{Eq1f11neq0-2}
af_{11,z_1}F-\dfrac{\Delta_{12}}{\eta}a_{z_0}z_1 -\dfrac{f_{11}}{\eta}a\Delta_{13 }   \pm 2\Delta_{13}- a\Delta_{23}= 
\frac{f_{11}}{\eta}\left( af_{31}\frac{\Delta_{12}}{\eta}\pm 2\Delta_{23}\right)
\end{eqnarray}
and
\begin{equation}\label{Eq2f11neq0-2}
\pm 2f_{11,z_1}F = \dfrac{f_{11}}{\eta} \bigg[ af_{11,z_1}F -\dfrac{\Delta_{12}}{\eta}a_{z_0}z_1 -\dfrac{f_{11}}{\eta}a\Delta_{13 } \pm2\Delta_{13} - a\Delta_{23}\bigg]
+ af_{31}\frac{\Delta_{12}}{\eta}\pm 2\Delta_{23}. 
\end{equation}
Substituting (\ref{Eq1f11neq0-2}) in (\ref{Eq2f11neq0-2}),  we obtain  
\[
F = \pm\dfrac{1}{f_{11,z_1}} \left(1+ \dfrac{f_{11}^2}{\eta^2}\right)
\left( af_{31}\frac{\Delta_{12}}{2\eta}\mp \Delta_{23}\right),
\]
 which simplifies to 
\begin{equation}\label{FCont}
F=\dfrac{(f_{11}^2 + \eta^2)f_{32}}{\eta f_{11,z_1}}.
\end{equation}

Observe that we are considering $f_{22}\neq 0$ and $f_{11,z_1}\neq 0$. 
A straightforward computation shows that (\ref{FCont}) leads to a contradiction 
for equation (\ref{eqF}) with $f_{ij}$ as in 
(\ref{fijF''}) with  $B\neq 0$ and equations (\ref{eqexp}), (\ref{eqLin}) with 
$f_{ij}$ given as in (\ref{fijexp}) - (\ref{fijLin3}). 
This concludes the proof of Lemma \ref{Claim I}. \end{proof}

\vspace{.2in}

\begin{Lem}\label{Claim II}
Consider an equation $u_{xt}=F(u,u_x)$ describing $\eta$ pseudo-spherical surfaces, with 1-forms 
$\omega^i$ as in (\ref{forms}) where the functions $f_{ij}$ are given by (\ref{fijF''})-(\ref{fijLin3}). Assume there is a local isometric immersion of 
a pseudo-spherical surface, determined by a solution $u(x,t)$, for which the coefficients $a,\, b, \,c$ of the second fundamental form depend on a jet of finite order of $u$.  
 If 
\begin{equation}\label{Eqcf11neq0}
c +  \bigg(\dfrac{f_{11}}{\eta}\bigg)^2a +  2\dfrac{f_{11}}{\eta} b \neq 0,  
\end{equation}
 holds then $a,\, b$ and $c$ are functions of $x$ and $t$, and thus universal.
 \end{Lem}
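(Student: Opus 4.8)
The plan is to show, under the standing hypothesis (\ref{Eqcf11neq0}), that $a,b,c$ can depend on none of the jet variables $z_1,\dots,z_\ell,w_1,\dots,w_\ell$, nor on $z_0=u$, by a descending induction on the order of the jet. The starting point is that (\ref{Eqcf11neq0}) says exactly that the bracketed factor in (\ref{Eqpro}) is nonzero; hence (\ref{Eqpro}) forces $a_{w_\ell}=0$, and then (\ref{Cons1}) gives $b_{w_\ell}=c_{w_\ell}=0$, so $a,b,c$ are independent of $w_\ell$. The whole proof is the engine that turns this one reduction into a complete descent.

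First I would eliminate all dependence on the $t$-derivatives. Once $a,b,c$ no longer involve $w_\ell$, differentiating (\ref{Eq1k}) and (\ref{Eq2k}) with respect to $w_\ell$ isolates the coefficient of the new top term and reproduces the analogue of (\ref{Cons1}) at the next level, namely $f_{11}a_{w_{\ell-1}}+\eta b_{w_{\ell-1}}=0$ and $f_{11}b_{w_{\ell-1}}+\eta c_{w_{\ell-1}}=0$; differentiating (\ref{Gauss}) in $w_{\ell-1}$ and invoking (\ref{Eqcf11neq0}) again yields $a_{w_{\ell-1}}=b_{w_{\ell-1}}=c_{w_{\ell-1}}=0$. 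Iterating strips off $w_\ell,\dots,w_1$. The same mechanism disposes of $z_0=u$: with no $w$'s left in $a,b,c$, the coordinate $w_1$ still enters (\ref{EQ1}) and (\ref{EQ2}) only through $z_{0,t}=w_1$, so differentiating with respect to $w_1$ gives $f_{11}a_{z_0}+\eta b_{z_0}=0$ and $f_{11}b_{z_0}+\eta c_{z_0}=0$, and (\ref{Gauss}) differentiated in $z_0$ together with (\ref{Eqcf11neq0}) forces $a_{z_0}=b_{z_0}=c_{z_0}=0$. At this stage $a,b,c$ depend only on $x,t$ and the spatial jet $z_1,\dots,z_\ell$.

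It remains to remove the $z$'s, and here the coefficients $(f_{12},f_{22})$ of $D_x$ play the role that $(f_{11},\eta)$ played above. For the current top order $k$, differentiating (\ref{Eq1k}) and (\ref{Eq2k}) with respect to $z_{k+1}$ gives $f_{12}a_{z_k}+f_{22}b_{z_k}=0$ and $f_{12}b_{z_k}+f_{22}c_{z_k}=0$, and differentiating (\ref{Gauss}) in $z_k$ then produces $\big[c+(f_{12}/f_{22})^2a+2(f_{12}/f_{22})b\big]a_{z_k}=0$ when $f_{22}\neq0$. When $f_{22}=0$ (the case of (\ref{fijLin3})) one has $f_{12}\neq0$ by (\ref{deltaijneq}), so the two relations give $a_{z_k}=b_{z_k}=0$, and then $a\neq0$ (Lemma \ref{Lem_ac}) with (\ref{Gauss}) gives $c_{z_k}=0$ directly; the induction closes.

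The crux is the vanishing of the new bracket $c+(f_{12}/f_{22})^2a+2(f_{12}/f_{22})b$, which is the value at $s=f_{12}/f_{22}$ of the quadratic $g(s)=as^2+2bs+c$ whose discriminant equals $4(b^2-ac)=4>0$ by (\ref{Gauss}), so $g$ has two distinct real roots (note that Lemma \ref{Claim I} treated the parallel case $g(f_{11}/\eta)=0$, whereas here $g(f_{11}/\eta)\neq0$). If the bracket vanishes then $a\,f_{12}/f_{22}+b=\pm1$; but $a,b$ no longer depend on $z_0$ while $f_{12}/f_{22}$ is a function of $z_0$ alone, so differentiating in $z_0$ and using $a\neq0$ forces $f_{12}/f_{22}$ to be constant in $u$, which by inspection of (\ref{fijF''})--(\ref{fijLin2}) occurs only for a short list of equations. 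For each of those I would substitute the resulting closed forms $b=\pm1-(f_{12}/f_{22})a$ and $c=(f_{12}/f_{22})^2a\mp2(f_{12}/f_{22})$ (the exact mirror of (\ref{b=c=Case1})) back into (\ref{Eq1k}) and (\ref{Eq2k}) and repeat the computation of Lemma \ref{Claim I}(ii) with the roles of $D_x$ and $D_t$ interchanged; the only configuration surviving that computation is the mirror of the exceptional equation of Lemma \ref{Claim I}, which here would force $g(f_{11}/\eta)=0$, against (\ref{Eqcf11neq0}). This contradiction shows the bracket is nonzero, whence $a_{z_k}=0$ and then $b_{z_k}=c_{z_k}=0$. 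Descending induction then removes $z_\ell,\dots,z_1$, leaving $a,b,c$ as functions of $x$ and $t$ alone. I expect this last case, the vanishing $z$-bracket, to be the main obstacle, precisely because top-order differentiation alone is insufficient there and one must feed the special forms of $b,c$ through the full equations.
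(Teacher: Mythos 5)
Your $w$-descent, the elimination of $z_0$, the $z$-descent via the pair $(f_{12},f_{22})$, and the $f_{22}=0$ case all coincide with the paper's argument, and your derivation of $(f_{12}/f_{22})_{z_0}=0$ in the bracket-vanishing case is actually cleaner than the paper's (the paper substitutes $b=\pm1-(f_{12}/f_{22})a$ into the full equations and differentiates with respect to $w_1$, whereas you differentiate $a\,f_{12}/f_{22}+b=\pm1$ in $z_0$ directly, using $a_{z_0}=b_{z_0}=0$ and $a\neq0$). The genuine gap is exactly where you flagged the ``main obstacle,'' and it is not a removable one: the case $c+(f_{12}/f_{22})^2a+2(f_{12}/f_{22})b=0$ with $f_{12}/f_{22}$ constant in $u$ is not an exotic configuration to be excluded by inspection --- it is realized identically by the exponential equation (\ref{eqexp}), since (\ref{fijexp}) and (\ref{fijexp1}) give $f_{12}/f_{22}=A\delta$ with $f_{12}f_{22}\neq0$ for every solution. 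For that equation one must actually prove that (\ref{EQ1}), (\ref{EQ2}) and (\ref{Gauss}) are inconsistent; your proposal to ``repeat the computation of Lemma \ref{Claim I}(ii) with the roles of $D_x$ and $D_t$ interchanged'' is a plan, not a proof, and its predicted outcome is moreover incorrect.

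What the paper does at this point cannot be obtained by mirroring. After $(f_{12}/f_{22})_{z_0}=0$, the two structure equations collapse to (\ref{Cons1-2})--(\ref{Cons2-2}); the combination $(f_{12}/f_{22})\cdot$(\ref{Cons1-2})$\,+\,$(\ref{Cons2-2}) yields $(f_{32}/f_{22})\Delta_{12}\,a\pm2\Delta_{23}=0$, and the contradictions are then unconditional algebraic ones: for $\gamma=1$, $a$ is forced to be constant and the residual $2\times 2$ system in $(\Delta_{13},\Delta_{23})$ forces $b=0$ and $a=c$, violating (\ref{Gauss}); for $\gamma\neq1$, solving for $a$ and substituting back produces $(B^2-A^2\gamma)z_1^2-A^2\beta=0$, which is impossible. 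Nothing in this computation ``forces'' the first bracket $c+(f_{11}/\eta)^2a+2(f_{11}/\eta)b$ to vanish, so the contradiction you anticipate with hypothesis (\ref{Eqcf11neq0}) never materializes; there is simply no surviving configuration at all. The mirror symmetry you invoke fails structurally: in Lemma \ref{Claim I} the bracket is built on $f_{11}/f_{21}$ with $f_{21}=\eta$ constant, whereas here both $f_{12}$ and $f_{22}$ vary with $z_0$, and the consistent exceptional case of Lemma \ref{Claim I} ($f_{11,z_1}=0$, yielding (\ref{newabc})) has no consistent counterpart on the $t$-side of the classification. The heart of the lemma --- the inconsistency for (\ref{eqexp}) --- therefore remains unproved in your write-up.
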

 
\vspace{.1in} 
 
 \begin{proof}
 If  (\ref{Eqcf11neq0}) holds 
then, it follows from  Lemma \ref{Lem_ac} that $c\neq 0$ and $ f_{11}\neq 0$.
Moreover, from  (\ref{Eqpro}) we get  
$a_{w_\ell} = 0$ 
and hence (\ref{Cons1}) implies that $b_{w_\ell} = c_{w_\ell}=0$.

If $\ell=0$, then $a, b$, and $c$ are functions of $x$ and $t$, and thus universal. 
 If $\ell\geqslant 1$, then consecutive differentiation of (\ref{Eq1k}), (\ref{Eq2k}) and (\ref{Gauss}) with respect to $w_\ell, \dots w_1$ lead to $a_{w_i} = b_{w_i} = c_{w_i} = 0$ for $i=0, \dots, \ell$.  In particular, 
$a$, $b$ and $c$ do not depend on $z_0$.  Therefore, $a, b$, and $c$ are functions of $x, t, z_1, \dots, z_\ell$. Differentiating (\ref{Eq1k}) and (\ref{Eq2k}) 
with respect to $z_{\ell+1}$ leads to 
\begin{equation}\label{Cons2}
f_{12}a_{z_\ell} + f_{22} b_{z_\ell} = 0  \quad  \text{ and }\quad f_{12}b_{z_\ell} + f_{22} c_{z_\ell} = 0.
\end{equation} 
Differentiation of the Gauss equation (\ref{Gauss}) with respect to $z_\ell$ gives 
\begin{equation}\label{Gaussdifk}
ca_{z_\ell} + ac_{z_\ell} - 2bb_{z_\ell} = 0.  
\end{equation}

\vspace{.1in}

 If $f_{22} = 0$, which is the case for  equation (\ref{eqLin}) with 
$f_{ij}$ as in (\ref{fijLin3}), since $f_{12}\neq 0$, (\ref{Cons2}) implies that 
$a_{z_\ell} = b_{z_\ell} = 0$, and (\ref{Gaussdifk}) leads to  
  $a c_{z_\ell} = 0$. From Lemma (\ref{Lem_ac}) we have $a\neq 0$, hence $c_{z_\ell}=0$. 
  Successive differentiation of (\ref{Eq1k}), (\ref{Eq2k}) and (\ref{Gauss}) with respect to $z_\ell, \dots , z_2$ leads to $a_{z_i} = b_{z_i} =0$, and hence $c_{z_i} = 0$ for $i=1, \dots , \ell$. Therefore, $a, b$, and $c$ are functions of $x$ and $t$. 

\vspace{.1in}

 If $f_{22} \neq  0$, then (\ref{Cons2}) leads to 
\begin{equation}\label{bzk_czk}
b_{z_\ell} = -\dfrac{f_{12}}{f_{22}}a_{z_\ell} \quad \text{ and } \quad c_{z_\ell} = \dfrac{f_{12}^2}{f_{22}^2} a_{z_\ell}.
\end{equation}
Substituting these expressions into (\ref{Gaussdifk}) we get  
\begin{equation}\label{Pro2}
\bigg[ c +  \bigg(\dfrac{f_{12}}{f_{22}}\bigg)^2 a +  2\dfrac{f_{12}}{f_{22}} b\bigg]a_{z_\ell} = 0.
\end{equation}

If 
\begin{equation}\label{cf12neq0}
 c +  \bigg(\dfrac{f_{12}}{f_{22}}\bigg)^2 a +  2\dfrac{f_{12}}{f_{22}} b \neq 0, 
\end{equation}
then $a_{z_\ell}=0$ and (\ref{bzk_czk}) implies that $b_{z_\ell}=c_{z_\ell}=0$. 
 Consecutive differentiations of (\ref{Eq1k}) and (\ref{Eq2k}) with respect to $z_\ell, \dots, z_2$ lead to $a_{z_i} = b_{z_i} = c_{z_i} = 0$ for $i=1, \dots, \ell$, and hence, $a, b$ and $c$ are functions of $x$ and $t$ only.

If 
\begin{equation}\label{cf120}
 c +  \bigg(\dfrac{f_{12}}{f_{22}}\bigg)^2 a +  2\dfrac{f_{12}}{f_{22}} b = 0 
\end{equation}
on a non empty open set, then Lemma \ref{Lem_ac} and (\ref{Eqcf11neq0})  imply that $c\neq 0$ and hence $f_{12}\neq 0$.
It follows from (\ref{cf120}) and (\ref{Gauss}) that 
\begin{equation}\label{b=c=Case2}
b= \pm 1 - \dfrac{f_{12}}{f_{22}}a \quad  \text{ and }  \quad 
c = \bigg(\dfrac{f_{12}}{f_{22}}\bigg)^2a  \mp 2\dfrac{f_{12}}{f_{22}}.
\end{equation} 
Therefore, 
\begin{eqnarray*}
f_{11}D_t a + \eta D_t b& =& \dfrac{\Delta_{12}}{f_{22}}D_t a - \eta a 
    \bigg(\dfrac{f_{12}}{f_{22}} \bigg)_{z_0} w_1, \\
f_{12}D_x a + f_{22}D_x b &=&  - a f_{22}\bigg(\dfrac{f_{12}}{f_{22}} \bigg)_{z_0}z_1, \\  
f_{11}D_t b + nD_t c &=&\left(\frac{2\eta f_{12}}{f_{22}}a-f_{11}a \mp 2\eta \right) 
    \bigg(\dfrac{f_{12}}{f_{22}} \bigg)_{z_0} w_1 -\frac{f_{12}}{f_{22}^2}\Delta_{12}D_ta,\\
f_{12}D_x b + f_{22}D_x c& =& (af_{12}\mp 2f_{22})\bigg(\dfrac{f_{12}}{f_{22}} \bigg)_{z_0}z_1. 
\end{eqnarray*}

Therefore, equation (\ref{EQ1}) becomes 
\begin{equation*}\label{EQ1-1}
\dfrac{\Delta_{12}}{f_{22}} D_t a -\eta a \bigg(\dfrac{f_{12}}{f_{22}} \bigg)_{z_0}w_1 + a f_{22}\bigg(\dfrac{f_{12}}{f_{22}} \bigg)_{z_0}z_1  - 2b \Delta_{13}  +(a-c) \Delta_{23} = 0
\end{equation*}
and (\ref{EQ2}) becomes
\begin{equation*}
\label{EQ2-1}
 -\dfrac{f_{12}}{f_{22}}\dfrac{\Delta_{12}}{f_{22}}D_t a + 
\left\{ \left(\frac{2\eta f_{12}}{f_{22}}a-f_{11}a \mp 2\eta \right) w_1  
-(af_{12}\mp 2f_{22})z_1\right\} \bigg(\dfrac{f_{12}}{f_{22}} \bigg)_{z_0} 
+ (a-c) \Delta_{13} + 2b\Delta_{23}=0.
\end{equation*}
Differentiating the first equation  with respect to $w_1$ leads to 
$\eta a
(\frac{f_{12}}{f_{22}})_{z_0}=0$. Since $\eta a\neq 0$ we have $ (\frac{f_{12}}{f_{22}})_{z_0}=0$ and the equations reduce to
\begin{eqnarray}\label{Cons1-2}
\dfrac{\Delta_{12}}{f_{22}} D_t a  - 2b \Delta_{13}  +(a-c) \Delta_{23} = 0,\\ 
\label{Cons2-2}
-\dfrac{f_{12}}{f_{22}}\dfrac{\Delta_{12}}{f_{22}}D_t a  + (a-c) \Delta_{13} + 2b\Delta_{23}=0, 
\end{eqnarray}
Adding (\ref{Cons1-2}) multiplied by $f_{12}/f_{22}$ with (\ref{Cons2-2}) we get  
\[ 
 a\Delta_{13}+\left( \pm 2 -\frac{f_{12}}{f_{22}}a \right)\Delta_{23}=0, 
\]
which reduces to 
\begin{equation}\label{a}
\frac{f_{32}}{f_{22}}\Delta_{12} a\pm 2\Delta_{23}=0.
\end{equation}
Observe that we have $f_{22}\neq 0$, $ f_{12}\neq 0$ and $(f_{12}/f_{22})_{z_0}=0$. Therefore, 
the only equation that satisfies these conditions is (\ref{eqexp}) with $f_{ij}$ as in Lemma \ref{Lemexp}.

If $\gamma=1$, it follows from (\ref{fijexp1}) that $f_{32}\neq 0$ and  (\ref{a}) implies that 
$a$ is constant hence, $D_ta=0$. Therefore, (\ref{Cons1-2}) and (\ref{Cons2-2}) reduce to 
\[
\left(
\begin{array}{cc}
-2b & a-c \\
a-c & 2b
\end{array}\right)
\left(
\begin{array}{c}
\Delta_{13} \\
\Delta_{23}
\end{array}\right)=
\left(
\begin{array}{c}
0 \\
0
\end{array}\right).
\]
 It follows from (\ref{deltaijneq}) that $b=0$ and $a=c$, which contradicts the Gauss equation.
   
If $\gamma\neq 1$, the functions $f_{ij}$ are given by (\ref{fijexp}). If $f_{32}=0$ then 
$B=0$ and (\ref{a}) implies that $\Delta_{23}=0$. Then it follows from the expression of $\Delta_{23}$ 
that $A=0$, which contradicts the fact that $A^2-B^2\neq 0$. 
If $f_{32}\neq 0$ i.e., $B\neq 0$, then  (\ref{a}) implies that 
\begin{equation}\label{aFinal}
a=\mp 2 \frac{\Delta_{23}f_{22}}{\Delta_{12}f_{32}}.
\end{equation}
Substituting the expressions of $b$ and $c$ as in (\ref{b=c=Case2}) into (\ref{Cons1-2}) we get 
\begin{equation} 
\label{aDta2}
\dfrac{\Delta_{12}}{f_{22}}D_t a +2\left(\frac{f_{12}}{f_{22}}a\mp 1 \right) \Delta_{13}+
     \left[a-\left(\frac{f_{12}}{f_{22}} \right)^2a\pm 2\frac{f_{12}}{f_{22}}\right]\Delta_{23}=0.
\end{equation}      
Computing the total derivative of $a$ with respect to $t$, using the expression of $a$ as in (\ref{aFinal}), equation (\ref{aDta2}) leads to 
\[
F (\Delta_{23,z_1}\Delta_{12} - \Delta_{12,z_1}\Delta_{23}) = -f_{22}(\Delta_{13}^2 + \Delta_{23}^2), 
\]
which  in view of (\ref{eqexp}) and (\ref{fijexp}) reduces to 
$ (B^2-A^2\gamma)z_1^2-A^2\beta=0$, which is also a contradiction. Therefore, we conclude that  
 the system (\ref{EQ1}),  (\ref{EQ2}) and the Gauss equation is an inconsistent system. 
 This concludes the proof of Lemma \ref{Claim II}.  \end{proof}

\begin{Lem}\label{CoeffUniversalEqTypei} Consider the equation $u_{xt}=F(u,u_x)$ which describes $\eta$ pseudo-spherical surfaces  where $F$ is given by 
(\ref{eqF}) and  $f_{ij}$ as in (\ref{fijF''}).  If the coefficients of the second fundamental form of the isometric immersion  in $\mathbb{R}^3$ of the pseudo-spherical surface,  determined  by a solution $u$,   are universal, then the system of equations (\ref{EQ1}), (\ref{EQ2}) and the Gauss equation (\ref{Gauss}) is inconsistent.   
\end{Lem}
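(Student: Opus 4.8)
The plan is to exploit the fact that universality collapses every total derivative to a partial derivative in $x$ and $t$, so that $D_xa=a_x$, $D_ta=a_t$, and likewise for $b,c$. Then (\ref{EQ1}) and (\ref{EQ2}) become identities in the jet variables $z_0=u$ and $z_1=u_x$, which may be treated as independent coordinates, so that one may differentiate them freely with respect to $z_0$ and $z_1$ (exactly as in the proof of Lemma \ref{Lem_ac}). First I would record, directly from (\ref{fijF''}) and (\ref{deltaij}), that $f_{11}=\alpha AQ-\alpha Bz_1$ is affine in $z_1$ with $f_{11,z_1}=-\alpha B$, that $f_{12},f_{22}$ depend on $z_0$ only, and that
\[
\Delta_{13}=\frac{\alpha(QF'-\eta F)}{Q^2\alpha+\eta^2}\,z_1,\qquad
\Delta_{23}=\frac{\alpha A(\eta F'+\alpha QF)}{Q^2\alpha+\eta^2}\,z_1-\alpha BF,
\]
with $F=F(z_0)$, so that $\Delta_{13}$ is linear and $\Delta_{23}$ affine in $z_1$.

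The second step isolates the $z_1$-linear content. Differentiating (\ref{EQ1}) and (\ref{EQ2}) with respect to $z_1$ kills the $z_0$-only terms and leaves two relations of the shape $-\alpha B\,a_t+(\cdots)=0$ and $-\alpha B\,b_t+(\cdots)=0$, whose remaining coefficients are functions of $z_0$ through $F,F'$ alone. Differentiating once more with respect to $z_0$ annihilates the constants $-\alpha Ba_t$ and $-\alpha Bb_t$, and substituting $F''=-\alpha F$ from (\ref{eqF}) to rewrite the second derivatives, I expect to reach the homogeneous linear system
\[
2b\,P+(a-c)\,R=0,\qquad -(a-c)\,P+2b\,R=0,
\]
where $P=\alpha QF+\eta F'$ and $R=\alpha A(QF'-\eta F)$ are functions of $z_0$, and the coefficient matrix $\bigl(\begin{smallmatrix}2b & a-c\\ -(a-c) & 2b\end{smallmatrix}\bigr)$ has determinant $4b^2+(a-c)^2$.

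The final step extracts the contradiction. I would first argue that $P\not\equiv0$: the identity $P\equiv0$ would force $F'/F\equiv-\alpha Q/\eta$, hence $F$ a single exponential, and feeding this into $F''+\alpha F=0$ yields $Q^2\alpha+\eta^2=0$, which is excluded by the hypotheses of (\ref{fijF''}) (here one uses $F\not\equiv0$, the case $F=0$ being treated separately). Since the matrix above has entries constant in $z_0$ while it annihilates the vector-valued function $z_0\mapsto(P,R)$, which is not identically zero, its determinant must vanish identically in $(x,t)$; that is, $4b^2+(a-c)^2\equiv0$, forcing $b=0$ and $a=c$. This is incompatible with the Gauss equation (\ref{Gauss}), since it would give $a^2=-1$. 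Hence the system (\ref{EQ1}), (\ref{EQ2}), (\ref{Gauss}) is inconsistent, as claimed.

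The main obstacle I anticipate is bookkeeping rather than conceptual: cleanly separating the $z_0$- and $z_1$-dependence of $f_{11},\Delta_{13},\Delta_{23}$ so that the two successive differentiations produce precisely the system above, and checking that $P\not\equiv0$ is exactly what the standing constraint $Q^2\alpha+\eta^2\neq0$ guarantees, the degenerate single-exponential solutions of $F''+\alpha F=0$ being exactly those that this constraint forbids. Everything after that is the short determinant argument against the Gauss equation.
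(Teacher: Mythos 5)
Your proposal is correct and follows essentially the same route as the paper's proof: differentiate the universal-coefficient versions of (\ref{EQ1})--(\ref{EQ2}) with respect to $z_1$, then with respect to $z_0$ using $F''=-\alpha F$, arrive at a homogeneous $2\times 2$ system annihilating a not-identically-zero vector of functions of $z_0$, and conclude $4b^2+(a-c)^2=0$, which contradicts the Gauss equation. The only minor differences are cosmetic improvements on your side: by absorbing the factor $\alpha A$ into the vector $(P,R)$, your determinant argument handles the case $A=0$ uniformly (the paper treats $A=0$ as a separate case), and you justify $P\not\equiv 0$ from the standing hypothesis $Q^2\alpha+\eta^2\neq 0$, a point the paper asserts without proof.
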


\begin{proof} If the coefficients of the second fundamental form of the isometric immersion of the $\eta$ pseudo-spherical surfaces described by the differential  equation are universal, then  equations (\ref{EQ1}) and  (\ref{EQ2}) reduce to:
 \begin{eqnarray*}\label{EQ1-Univ}
f_{11}a_t + \eta b_t - f_{12}a_x - f_{22}b_x - 2b \Delta_{13} + (a-c)\Delta_{23} = 0\\\label{EQ2-Univ}
f_{11}b_t + \eta c_t - f_{12}b_x - f_{22}c_x +(a-c) \Delta_{13} + 2b\Delta_{23} = 0, 
\end{eqnarray*}
where $f_{ij}$ are given by (\ref{fijF''}). Differentiating 
both equations with respect to $z_1$ leads to 
\begin{eqnarray}\label{EQ1-iG-Univ}
-\alpha B a_t - 2b\dfrac{\alpha (QF' - \eta F)}{Q^2\alpha + \eta^2} + (a-c)\dfrac{\alpha A (\eta F' + \alpha Q F)}{Q^2 \alpha + \eta^2} = 0\\ \label{EQ2-iG-Univ}
-\alpha B b_t + (a-c)\dfrac{\alpha (QF' - \eta F)}{Q^2\alpha + \eta^2} + 2b\dfrac{\alpha A (\eta F' + \alpha Q F)}{Q^2 \alpha + \eta^2} = 0
\end{eqnarray}
Multiplying  (\ref{EQ1-iG-Univ}) and (\ref{EQ2-iG-Univ}) by  $Q^2\alpha + \eta^2 / \alpha$, and differentiating with respect to $z_0$, and taking into account that $F'' = -\alpha F$, we obtain
\[
\left(
\begin{array}{cc}
2b & \alpha A(a-c)\\ -(a-c) & 2\alpha A b
\end{array} \right)
\left(
\begin{array}{c}
\alpha QF+\eta F' \\ QF'-\eta F
\end{array} \right) =
\left(
\begin{array}{c}
0\\ 0
\end{array} \right).
\]
Since $\alpha QF+\eta F'$ and  $QF'-\eta F$ are not zero, we conclude that 
$\alpha A[4b^2+(a-c)^2]=0$. If $b=0$ and $a=c$ then Gauss equation leads to a contradiction.
If $A=0$ then  equations (\ref{EQ1-iG-Univ}) and (\ref{EQ2-iG-Univ}) reduce to 
\begin{eqnarray*}
-\alpha B a_t - 2b\dfrac{\alpha (QF' - \eta F)}{Q^2\alpha + \eta^2}  = 0,\\ 
-\alpha B b_t + (a-c)\dfrac{\alpha (QF' - \eta F)}{Q^2\alpha + \eta^2} = 0.
\end{eqnarray*}
taking derivative with respect to $z_0$ of both equations, we conclude that $b=a-c=0$ which is again a contradiction. Therefore, the system (\ref{EQ1}), (\ref{EQ2}) and the Gauss equation is inconsistent.  \end{proof}


\begin{Prop}\label{Propi}Consider an equation 
\begin{equation*}
u_{xt} = F(u),  \mbox{ with } \quad F''+\alpha F=0, \quad \alpha\neq 0, 
\end{equation*}
describing $\eta$ pseudo-spherical surfaces
with $f_{ij}$ given by 
(\ref{fijF''}). 
There exists a local isometric immersion in $\mathbb{R}^3$ of a pseudo-spherical surface, defined by a solution $u$, 
for which the coefficients of the second fundamental form  depend on a jet of finite order of $u$, that is, $a, b$ and $c$ depend on $x, t, u, w_1\dots, \partial^\ell u/\partial x^\ell, w_\ell$, where $\ell$ is finite if, and only if,  $\alpha>0$ and  $f_{ij}$ are given by (\ref{fijF''QA}),  
 $a, b, c$ depend on the jet of order zero of $u$ and  are given by  (\ref{newabc}). 
  \end{Prop}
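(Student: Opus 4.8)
The plan is to derive the proposition by combining, for equation (\ref{eqF}), the general Lemmas \ref{Lem_ac}, \ref{Claim I} and \ref{Claim II} (valid for the whole Rabelo--Tenenblat list) with Lemma \ref{CoeffUniversalEqTypei}, which is tailored to (\ref{eqF}) with $f_{ij}$ as in (\ref{fijF''}). The organizing principle is the factorization (\ref{Eqpro}): for an immersion whose coefficients depend on a jet of order $\ell$, one has $[c + (f_{11}/\eta)^2 a + 2(f_{11}/\eta)b]\,a_{w_\ell}=0$. Since the bracket is smooth, on a suitable open subset of the domain of $u$ it either vanishes identically or is nonzero on a (possibly smaller) open set, and I would split the forward implication according to these two alternatives.

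First I would assume that the bracket in (\ref{Eqcf11}) vanishes on a nonempty open set, so that Lemma \ref{Claim I} applies. For equation (\ref{eqF}) the coefficients $f_{ij}$ are given by (\ref{fijF''}), of which (\ref{fijF''QA}) is exactly the case $B=0$, forcing $\alpha=1/A^2>0$. Part ii) of Lemma \ref{Claim I} excludes the complementary case $B\neq 0$ by showing that (\ref{EQ1}), (\ref{EQ2}), (\ref{Gauss}) is then inconsistent, while part i) shows that in the remaining case, $f_{ij}$ as in (\ref{fijF''QA}), the coefficients are forced to be (\ref{newabc}). Next I would assume instead that (\ref{Eqcf11neq0}) holds on an open set: Lemma \ref{Claim II} then makes $a,b,c$ universal, and Lemma \ref{CoeffUniversalEqTypei} shows that universal coefficients render (\ref{EQ1}), (\ref{EQ2}), (\ref{Gauss}) inconsistent. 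This contradiction rules out the second alternative, so an immersion with finite-jet coefficients exists only when $\alpha>0$ and $f_{ij}$ are as in (\ref{fijF''QA}), with $a,b,c$ given by (\ref{newabc}); since each of these expressions depends on $u$ only through $F'/F$ and $F=F(u)$, they depend on the jet of order zero.

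For the converse I would run the computation in the proof of Lemma \ref{Claim I} i) backwards. Substituting (\ref{newabc}) together with $f_{11}=\alpha AQ$ and $\alpha A^2=1$ makes (\ref{Eq1f11z10}) an identity, and the formulas (\ref{b=c=Case1}) for $b$ and $c$ were obtained from (\ref{Gauss}), so all of (\ref{EQ1}), (\ref{EQ2}) and (\ref{Gauss}) hold. This exhibits a genuine local isometric immersion whose coefficients depend only on $u$, and it recovers (\ref{abcf110}) in the degenerate case $Q=0$, where $f_{11}=0$, $c=0$, and Lemma \ref{Lem_ac} ii) applies.

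The analytic content is already carried by the lemmas, so the proposition is mostly an exercise in bookkeeping. The only point demanding care is checking that the case analysis is exhaustive over the parameters $A,B,Q,\eta$ of (\ref{eqF}): one must verify that (\ref{fijF''QA}) is precisely the $B=0$ portion of (\ref{fijF''}) (which automatically carries $\alpha=1/A^2>0$, so that $\alpha<0$ forces $B\neq 0$), and that the two inconsistency statements together cover every configuration other than the distinguished one. I expect no real obstacle beyond this exhaustiveness check and the consistent absorption of the $Q=0$ subcase into the general formula (\ref{newabc}).
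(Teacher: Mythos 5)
Your proposal is correct and follows essentially the same route as the paper's own proof: the same dichotomy on the bracket in (\ref{Eqpro}), with Lemma \ref{Claim I} forcing $B=0$, $\alpha>0$ and the formulas (\ref{newabc}) in the vanishing case, Lemmas \ref{Claim II} and \ref{CoeffUniversalEqTypei} ruling out the nonvanishing case, and the converse verified by direct substitution (the paper's ``straightforward computation''). No substantive gaps.
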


\begin{proof}
Assume the local isometric immersion exists. If 
$c+(f_{11}/\eta)^2a+2f_{11}b/\eta=0$ on a non empty open set, then it follows from Lemma \ref{Claim I} that  
$B=0$, i.e. $\alpha>0$ and  $f_{ij}$ are given by  (\ref{fijF''QA}). 
Moreover, $a, b, c$ depend on the jet of order zero of $u$ and  are given by (\ref{newabc}).  
 If 
$c+(f_{11}/\eta)^2a+2f_{11}b/\eta\neq 0$, then Lemma \ref{Claim II} implies that $ a, b, c $ are 
universal. However, it follows from Lemma \ref{CoeffUniversalEqTypei} that such an immersion does not exist.

Conversely, a straightforward computation shows that if $f_{ij}$ are given as in  (\ref{fijF''QA}) and $a,b, c$ as in  (\ref{newabc}), then 
the connection forms $\omega_1^3$ and $\omega_2^3$ given by (\ref{w13_w23}) satisfy the structure 
equations (\ref{Codazzi}) of an immersion in $\mathbb{R}^3$ and the Gauss equation (\ref{Gauss}).  \end{proof}

\begin{Prop}\label{Propii} Consider an equation of type  
$u_{xt}= \nu e^{\delta u}\sqrt{\beta +\gamma u_x^2}$  describing $\eta$ pseudo-spherical surfaces, 
with $f_{ij}$ given by Lemma \ref{Lemexp}. There is no local isometric immersion in $\mathbb{R}^3$ of a 
pseudo-spherical surface determined by a solution $u$ of the equation, for which  the coefficients of the second fundamental form depend on a jet of finite order of $u$.  
  \end{Prop}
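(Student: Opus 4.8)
The plan is to follow the same strategy as in the proof of Proposition~\ref{Propi}: use the dichotomy furnished by~(\ref{Eqpro}) to split into the two cases governed by the bracket $c+(f_{11}/\eta)^2a+2(f_{11}/\eta)b$, dispose of one case by the lemmas already proved, and then kill the surviving (universal) case by a direct computation adapted to the coefficients of Lemma~\ref{Lemexp}. Concretely, I would assume for contradiction that a local isometric immersion with $a,b,c$ depending on a finite jet of $u$ exists. On any nonempty open set either~(\ref{Eqcf11}) or~(\ref{Eqcf11neq0}) holds. If~(\ref{Eqcf11}) holds on a nonempty open set, then since~(\ref{eqexp}) is not of the form~(\ref{eqF}) with coefficients~(\ref{fijF''QA}), part~ii) of Lemma~\ref{Claim I} applies and shows that~(\ref{EQ1}),~(\ref{EQ2}) and~(\ref{Gauss}) form an inconsistent system. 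If instead~(\ref{Eqcf11neq0}) holds, Lemma~\ref{Claim II} forces $a,b,c$ to be universal, i.e.\ functions of $x$ and $t$ only. Thus the whole proposition reduces to showing that the universal case is itself impossible for~(\ref{eqexp}), which is the analogue for the exponential equation of Lemma~\ref{CoeffUniversalEqTypei}.

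For the universal case I would exploit the structure of the coefficients in Lemma~\ref{Lemexp}: $f_{11}$ and $f_{31}$ depend only on $z_1$, while $f_{12},f_{22},f_{32}$ depend only on $z_0$ and, crucially, are all proportional to $e^{\delta z_0}$; hence by~(\ref{deltaij}) the functions $\Delta_{13}$ and $\Delta_{23}$ are proportional to $e^{\delta z_0}$ as well. Since $a,b,c$ are independent of $z_0$, differentiating~(\ref{EQ1}) and~(\ref{EQ2}) with respect to $z_0$ and using $f_{11,z_0}=f_{31,z_0}=0$ together with this common factor makes the $t$-derivative terms drop out and yields the reduced equations
\[
-f_{12}a_x-f_{22}b_x-2b\Delta_{13}+(a-c)\Delta_{23}=0,\qquad -f_{12}b_x-f_{22}c_x+(a-c)\Delta_{13}+2b\Delta_{23}=0.
\]

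Differentiating these with respect to $z_1$, and noting that $f_{12},f_{22}$ and the universal derivatives $a_x,b_x,c_x$ are all independent of $z_1$, I obtain the homogeneous system
\[
\begin{pmatrix}-2b & a-c\\ a-c & 2b\end{pmatrix}\begin{pmatrix}\Delta_{13,z_1}\\ \Delta_{23,z_1}\end{pmatrix}=\begin{pmatrix}0\\ 0\end{pmatrix}.
\]
A short computation, using $A^2-B^2=(\gamma-1)/\delta^2$ in~(\ref{fijexp}) (and the analogous identities in~(\ref{fijexp1}) when $\gamma=1$), shows that the $\sqrt{\beta+\gamma z_1^2}$ terms cancel and $\Delta_{13}=\pm\delta\nu e^{\delta z_0}z_1$, so that $\Delta_{13,z_1}\neq0$ and the column vector above is nonzero. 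Consequently the coefficient matrix must be singular, i.e.\ $4b^2+(a-c)^2=0$, which forces $b=0$ and $a=c$; this contradicts the Gauss equation~(\ref{Gauss}). Hence the universal case cannot occur either, and together with the two applications above this proves that no such immersion exists.

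The step I expect to be the main obstacle is precisely the computation of $\Delta_{13}$: one must verify that, despite the $\sqrt{\beta+\gamma z_1^2}$ appearing in $f_{11}$ and $f_{31}$, the combination $\Delta_{13}=f_{11}f_{32}-f_{31}f_{12}$ collapses to a \emph{nonzero} multiple of $e^{\delta z_0}z_1$, since this genuine $z_1$-dependence is exactly what makes the $2\times 2$ linear-algebra step bite. One must also keep careful track of the several sign choices in~(\ref{fijexp}) and treat the degenerate case $\gamma=1$ with coefficients~(\ref{fijexp1}) separately, but in every case the argument is designed to terminate at the same contradiction $b=0$, $a=c$ against~(\ref{Gauss}).
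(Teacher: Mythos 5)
Your proposal is correct and takes essentially the same route as the paper: Lemma \ref{Claim I} ii) rules out the case (\ref{Eqcf11}), Lemma \ref{Claim II} reduces to universal $a,b,c$, and differentiating (\ref{EQ1})--(\ref{EQ2}) with respect to $z_0$ and $z_1$ yields the same singular $2\times 2$ system forcing $b=0$ and $a=c$, contradicting (\ref{Gauss}). The only (harmless) differences are that you differentiate in $z_0$ before $z_1$ --- equivalent, since all $z_0$-dependence sits in the common factor $e^{\delta z_0}$ --- and that you explicitly verify $\Delta_{13}=\pm\delta\nu e^{\delta z_0}z_1$, where the paper merely asserts $\Delta_{13,z_1z_0}\Delta_{23,z_1z_0}\neq 0$.
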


\begin{proof} If the immersion exists, then Lemma \ref{Claim I} ii) implies that 
$c+(f_{11}/\eta)^2a+2f_{11}b/\eta\neq 0$, and  it follows from Lemma \ref{Claim II}  that $ a, b, c $ are 
universal.  Therefore,  equations (\ref{EQ1}) and  (\ref{EQ2}) reduce to
\begin{eqnarray*}
f_{11}a_t + \eta b_t - f_{12}a_x - f_{22}b_x - 2b \Delta_{13} + (a-c)\Delta_{23} = 0, \\ 
f_{11}b_t + \eta c_t - f_{12}b_x - f_{22}c_x +(a-c) \Delta_{13} + 2b\Delta_{23} = 0, 
\end{eqnarray*}
where $f_{ij}$ are given by (\ref{fijexp}) if $\gamma\neq 1$  and (\ref{fijexp1}) if $\gamma=1$.
Differentiating these equations with respect to $z_1$ and then with respect to $z_0$ leads to 
\[\left(\begin{array}{cc}
-2b & a-c\\
a-c & 2b \end{array}\right)
\left(\begin{array}{c}
\Delta_{13,z_1z_0} \\ \Delta_{23,z_1z_0}
\end{array}\right)\,=\,
\left(\begin{array}{c}
0 \\ 0
\end{array}\right).
\]
In both cases, i.e., $\gamma=1$ or $\gamma\neq 1$, since $\Delta_{13,z_1z_0} \Delta_{23,z_1z_0}\neq 0$,  these equations imply that
  $b=0$ and $a=c$ which is inconsistent with the Gauss equation.  \end{proof}

\begin{Prop}\label{Propiii}  Consider an equation $u_{xt}=\lambda u+\xi u_x+\tau $ describing  $\eta$-pseudospherical surfaces with $f_{ij}$ given by (\ref{fijLin1})-(\ref{fijLin3}). There exists a local 
isometric immersion  in $\mathbb{R}^3$ of a pseudo-spherical surface, defined by a solution $u$,  for which    the coefficients of the second fundamental form $a$, $b$, $c$ 
  depend of a jet of finite order of $u$ if, and only if, $\lambda$, $\xi$ and $\tau$ do not vanish simultaneously and $a$, $b$, $c$ 
 are universal and given by:
\begin{enumerate}
\item [i)] When  $\lambda \neq 0$, 
\begin{equation} \label{abcLin2}
a= \sqrt{l L(x,t) - \gamma^2 L^2(x,t) - 1},\qquad 
b = \gamma L(x,t), 
\qquad c=\frac{b^2-1}{a}, 
\end{equation}
where $L(x,t)= e^{\pm 2 [\eta x + (\lambda / \eta \mp \zeta) t ]}$ $l, \gamma \in \mathbb{R}$ and $l^2 > 4\gamma^2$ and the  $1$-forms are defined on a strip of $\mathbb{R}$ where
\begin{equation}\label{stripLin2}
\log \sqrt{ \dfrac{l - \sqrt{l^2 - 4 \gamma^2}}{2\gamma^2 }} < \pm [\eta x + (\lambda / \eta \mp \zeta) t ] <  \log \sqrt{ \dfrac{l + \sqrt{l^2 - 4 \gamma^2}}{2\gamma^2 }}. 
\end{equation}
 
\item [ii)] When  $\lambda = 0$ and $\xi^2+\tau^2\neq 0$,  
\begin{equation}\label{abcLin3}
a= \sqrt{l e^{2 \eta x } - \gamma^2 e^{4  \eta x } - 1},\qquad 
b = \gamma e^{ 2  \eta x  },\qquad c=\frac{b^2-1}{a}, 
\end{equation}

$l, \gamma \in \mathbb{R}$ and $l^2 > 4\gamma^2$ and the  $1$-forms are defined on a strip of $\mathbb{R}^2$ where
\begin{equation}\label{stripLin3}
\log \sqrt{ \dfrac{l - \sqrt{l^2 - 4 \gamma^2}}{2\gamma^2 }} <  \eta x <  \log \sqrt{ \dfrac{l + \sqrt{l^2 - 4 \gamma^2}}{2\gamma^2 }}.  
\end{equation}

\end{enumerate}
 Moreover, the constants $l$ and $\gamma$ have to be chosen so that the strip intersects the domain of the solution of the evolution equation. 
\end{Prop}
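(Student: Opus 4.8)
The plan is to reduce the problem to the universal case and then integrate an explicit ordinary differential equation. Assuming a local isometric immersion exists, the coefficients $a,b,c$ satisfy (\ref{EQ1}), (\ref{EQ2}) and the Gauss equation (\ref{Gauss}). Since none of the linear equations $u_{xt}=\lambda u+\xi u_x+\tau$ is of the form (\ref{eqF}), Lemma \ref{Claim I}~(ii) applies: were the bracket $c+(f_{11}/\eta)^2a+2(f_{11}/\eta)b$ to vanish on an open set, the system would be inconsistent. Hence this bracket is nonzero, and Lemma \ref{Claim II} forces $a,b,c$ to be universal, that is, functions of $x$ and $t$ alone. The degenerate subcase $\lambda=\xi=\tau=0$, with $f_{ij}$ as in (\ref{fijLin1}), is dispatched separately: here $\Delta_{13}=z_1e^{z_0}$ and $\Delta_{23}=0$, and differentiating the universal forms of (\ref{EQ1}), (\ref{EQ2}) with respect to $z_1$ gives $a_t=2be^{z_0}$ and $b_t=-(a-c)e^{z_0}$; since $a_t,b_t$ cannot depend on $z_0$ this forces $b=0$ and $a=c$, contradicting (\ref{Gauss}). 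This establishes the necessity of requiring $\lambda,\xi,\tau$ not all zero.

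The structural observation driving the remaining cases is that the first and third rows of $(f_{ij})$ in (\ref{fijLin2}) and (\ref{fijLin3}) are proportional, so that $\Delta_{13}=0$ identically. With $a,b,c$ universal and $\Delta_{13}=0$, equations (\ref{EQ1}) and (\ref{EQ2}) become polynomial identities in $z_0,z_1$ through $f_{11},f_{12}$ and $\Delta_{23}$, and I would extract the coefficient relations by differentiating in $z_0$ and $z_1$. For (\ref{fijLin2}) with $\lambda\neq0$ this yields $a_x=\eta(a-c)$, $a_t=f_{22}(a-c)$, $b_x=\pm2\eta b$ and $b_t=\pm2f_{22}b$, where $f_{22}=\lambda/\eta\mp\xi$ is constant; the leftover constant terms reduce to $\eta b_t=f_{22}b_x$ and $\eta c_t=f_{22}c_x$, which are then automatically satisfied. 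For (\ref{fijLin3}) with $\lambda=0$ one has $f_{22}=0$, $\Delta_{23}=1$, and since $f_{11,z_{1}}=1/F\neq0$ the $z_1$-derivatives of (\ref{EQ1}), (\ref{EQ2}) give $a_t=b_t=0$, after which the surviving relations are $a_x=\eta(a-c)$ and $b_x=2\eta b$.

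In both cases $a,b,c$ therefore depend only on the characteristic variable $s=\eta x+f_{22}t$, and the equations collapse to $a'=a-c$ and $b'=2b$ (primes denoting $d/ds$, up to the overall sign), so $b=\gamma L$ with $L=e^{\pm2s}$. Substituting $c=(b^2-1)/a$ from (\ref{Gauss}) into $a'=a-c$ and setting $v=a^2$ turns the problem into the linear first-order equation $v'=2v-2\gamma^2L^2+2$, whose solution is $v=lL-\gamma^2L^2-1$. This gives $a=\sqrt{lL-\gamma^2L^2-1}$ and $c=(b^2-1)/a$ exactly as in (\ref{abcLin2}) and (\ref{abcLin3}); positivity of $v$ in the variable $w=L$ requires $l>0$ and $l^2>4\gamma^2$ and confines $(x,t)$ to the strips (\ref{stripLin2}), (\ref{stripLin3}), with $L$ reducing to $e^{2\eta x}$ in case (ii) since $f_{22}=0$.

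For the converse I would substitute these explicit $a,b,c$ into the forms $\omega^3_{1},\omega^3_{2}$ defined by (\ref{w13_w23}) and check directly that the Codazzi equations (\ref{Codazzi}) and the Gauss equation (\ref{Gauss}) hold, exactly as in Proposition \ref{propabcexpluniversal}; this is a routine verification. I expect the principal difficulty to be organizational rather than conceptual: keeping the two sign families of (\ref{fijLin2}) coherent throughout, and correctly matching the strips (\ref{stripLin2}), (\ref{stripLin3}) to the domain of the given solution $u$. The one genuinely decisive input is the identity $\Delta_{13}=0$, which decouples the system and reduces the entire problem to a single linear ODE for $a^2$.
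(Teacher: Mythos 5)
Your proposal is correct and follows essentially the same route as the paper: universality of $a,b,c$ via Lemmas \ref{Claim I}(ii) and \ref{Claim II}, elimination of the case $\lambda=\xi=\tau=0$ by deriving $b=0$, $a=c$ and contradicting (\ref{Gauss}), then using $\Delta_{13}=0$ for (\ref{fijLin2}) and (\ref{fijLin3}) and differentiation in $z_1$ and $z_0$ to obtain $a_x=\pm\eta(a-c)$, $b_x=\pm2\eta b$, $a_t=\pm f_{22}(a-c)$, $b_t=\pm2f_{22}b$ (with $a_t=b_t=0$ when $f_{22}=0$), followed by explicit integration and a routine verification of the converse. The only cosmetic difference is that you integrate the resulting ODE directly (via $v=a^2$ in the characteristic variable $s=\eta x+f_{22}t$), whereas the paper cites the identical computation already carried out in Proposition \ref{propabcexpluniversal}.
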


\begin{proof}If the coefficients of the second fundamental form of the local isometric immersion of $\eta$ pseudo-spherical surfaces described by the equation of type iii) depend of a jet of finite order of $u$,  then they are universal by Lemmas \ref{Claim I} and \ref{Claim II}, and hence (\ref{EQ1}) and (\ref{EQ2}) becomes 
\begin{eqnarray}\label{EQ1iii-Univ}
f_{11}a_t + \eta b_t - f_{12}a_x - f_{22}b_x- 2b \Delta_{13}+(a-c)\Delta_{23} = 0,\\\label{EQ2iii-Univ} 
f_{11}b_t + \eta c_t - f_{12}b_x - f_{22}c_x + (a-c) \Delta_{13}+ 2b\Delta_{23} = 0.
\end{eqnarray}

\vspace{.1in}
 
 If $\lambda=\xi=\tau=0$ and $f_{ij}$ are given by (\ref{fijLin1}) then taking the derivative of both equations with respect to $z_0$, and using the fact that $\Delta_{13}=e^{z_0}z_1$ and $\Delta_{23}=0$ 
 we get 
 \begin{eqnarray*}
 b_x+2bz_1=0,\\
 c_x-(a-c)z_1=0.
 \end{eqnarray*} 
Since $a,b,c $ are universal we conclude that $b=0$ and $a=c$ which contradicts Gauss equation.
Therefore the immersion does not exit. 

\vspace{.1in}
  
i) If $\lambda \neq 0$ and  the functions $f_{ij}$ are as in (\ref{fijLin2}) then $\Delta_{13}=0$. 
 Differentiating (\ref{EQ1iii-Univ}) and (\ref{EQ2iii-Univ}) with respect to $z_1$ leads to (after dividing by $f_{11, z_1}$)
\begin{eqnarray}\label{iiiG-at}
a_t  &=& \pm f_{22}(a-c),\\\label{iiiG-bt}
b_t &=& \pm 2 b f_{22}.
\end{eqnarray}
Differentiating (\ref{EQ1iii-Univ}) and (\ref{EQ2iii-Univ}) 
with respect to $z_0$ leads to (after dividing by $f_{12,z_0}$)
\begin{eqnarray}\label{iiiG-ax}
a_x &=& \pm \eta (a-c),\\
b_x & =& \pm 2 \eta b. \label{iiiG-bx}
\end{eqnarray}
and hence, (\ref{EQ1iii-Univ}) and (\ref{EQ2iii-Univ}) reduce to 
\begin{eqnarray}\label{iiiG-btbx}
\eta b_t - f_{22}b_x = 0,\\\label{iiiG-ctcx}
\eta c_t - f_{22}c_x = 0. 
\end{eqnarray}
The equations (\ref{iiiG-at}), (\ref{iiiG-bt}), (\ref{iiiG-ax}), (\ref{iiiG-bx}), (\ref{iiiG-btbx}), and (\ref{iiiG-ctcx}) are the same as (\ref{Cat=0}), (\ref{Cbt=0}), (\ref{Cax=0}), (\ref {Cbx=0}), (\ref{btbx=0}), and  (\ref{ctcx=0}) respectively, since $f_{22}$ is contant. Therefore, $a$ is as in  (\ref{auniversal}), $b$ is as in (\ref{buniversal}), and $c$ is as in (\ref{cuniversal}) and are subject to (\ref{strip}),  where $\lambda$ is replaced by $f_{22} = \lambda/\eta \mp \zeta$. Therefore, we obtain $a,b,c$ given as in (\ref{abcLin2}) 
defined on the strip (\ref{stripLin2}). 

\vspace{.1in}

ii) If $\lambda = 0$, $\xi^2+\tau^2\neq 0$ and the functions $f_{ij}$ are as in (\ref{fijLin3}), then $\Delta_{13}=0$ and 
$\Delta_{23}=1$, hence   
  (\ref{EQ1iii-Univ}) and (\ref{EQ2iii-Univ}) reduce to 
\begin{eqnarray}\label{EQ1iiiNG-Univ-1}
f_{11}a_t + \eta b_t - f_{12}a_x + (a-c) = 0,\\\label{EQ2iiiNG-Univ-1}
f_{11}b_t + \eta c_t - f_{12}b_x + 2b= 0. 
\end{eqnarray}
Differentiating  with respect to $z_1$ leads to $a_t = b_t = 0$. Since from Lemma \ref{Lem_ac} we have $a\neq 0$, Gauss equation implies that $c_t=0$ 
 and thus (\ref{EQ1iiiNG-Univ-1})  and (\ref{EQ2iiiNG-Univ-1})  become
\begin{eqnarray*}
a_x & = & \eta (a-c),\\
b_x & = & 2\eta b,
\end{eqnarray*}
where $c=(b^2-1)/a$. The arguments used in the proof of Proposition \ref{propabcexpluniversal}, with $\lambda=0$ and $\pm$ replaced by $+$, imply that $a, b, c$ are given by (\ref{abcLin3}), that are defined on the strip 
given by (\ref{stripLin3}).

The converse follows from a straightforward computation.   \end{proof}

Finally, the proof of Theorem \ref{HyperbRes} follows from Propositions \ref{Propi}, \ref{Propii} and \ref{Propiii}.   \hfill $\Box$


\vspace{.35in}

\noindent 
Nabil Kahouadji, Department of Mathematics, Northwestern University, USA\\
e-mail: nabil@math.northwestern.edu
\\ \\
Niky Kamran, Department of Mathematics and Statistics, McGill University, Canada\\ 
e-mail: nkamran@math.mcgill.ca
\\ \\ 
Keti Tenenblat, Department of Mathematics, Universidade de Bras\'\i lia, Brazil\\ 
e-mail: K.Tenenblat@mat.unb.br

\end{document}